\DeclarePairedDelimiter\ceil{\lceil}{\rceil}
\newcommand{\red}{}
\definecolor{blue}{RGB}{000,000,200}
\definecolor{green}{RGB}{000,150,100}
\definecolor{purple}{RGB}{220,040,250}
\def\red{}
\newtheorem{theorem}{Theorem}
\newtheorem{lemma}{Lemma}
\newtheorem{remark}{Remark}
\newcommand{\be}{\begin{equation}}
\newcommand{\ee}{\end{equation}}
\newcommand{\bea}{\begin{eqnarray}}
\newcommand{\eea}{\end{eqnarray}}
\newcommand{\beas}{\begin{eqnarray*}}
	\newcommand{\eeas}{\end{eqnarray*}}
\newcommand{\0}{{\mathbf{0}}}
\renewcommand{\v}{{\mathbf{v}}}
\newcommand{\V}{{\rm V}}
\def\eps{\varepsilon}
\newcommand{\bfm}[1]{\ensuremath{\mathbf{#1}}}
\def\be{\bfm e}     \def\EE{\mathbb{E}}
\def\bj{\bfm j}
     \def\PP{\mathbb{P}}
     \def\RR{\mathbb{R}}
\def\bs{\bfm s}     
\def\bt{\bfm t}     
\def\bv{\bfm v}
\def\calE{{\cal  E}}
\def\calM{{\cal  M}} 
\def\calN{{\cal  N}}
\def\calR{{\cal  R}} 
\def\calS{{\cal  S}}
\newcommand{\rank}{{\rm rank}}
\newcommand{\tr}{{\rm tr}}
\def\tran{\mathsf{T}}
\newcommand*{\rom}[1]{\expandafter\@slowromancap\romannumeral #1@}
\def\mybox#1{\vskip1mm \begin{center} \bf \red
		\hspace{.0\textwidth}\vbox{\hrule\hbox{\vrule\kern6pt
				\parbox{.95\textwidth}{\kern6pt#1\vskip6pt}\kern6pt\vrule}\hrule}
	\end{center} \vskip-5mm}
\theoremstyle{plain}
\begin{document}

\begin{frontmatter}
\title{Normal Approximation and Confidence Region of Singular Subspaces}
\runtitle{Normal Approximation of SVD}

\begin{aug}
\author{\fnms{Dong} \snm{Xia}\thanksref{}\ead[label=e2]{madxia@ust.hk}\ead[label=u2,url]{http://www.math.ust.hk/{\raise.17ex\hbox{$\scriptstyle\sim$}}madxia/}}
%\and
%\author{\fnms{Third} \snm{Author}\thanksref{t1,m2}
%\ead[label=e3]{third@somewhere.com}
%\ead[label=u1,url]{http://www.foo.com}}

%\thankstext{t1}{Some comment}
%\thankstext{t2}{First supporter of the project}
%\thankstext{t3}{Second supporter of the project}
\runauthor{D. Xia}

\affiliation{Hong Kong University of Science and Technology}

\address{Department of Mathematics\\
Hong Kong University of Science and Technology\\
Clear Water Bay, Kowloon, Hong Kong.\\
\printead{e2}\\
\printead{u2}
}
\end{aug}

\begin{abstract}
This paper is on the normal approximation of singular subspaces when the noise matrix has i.i.d. entries. Our contributions are three-fold. First, we derive an explicit representation formula of the empirical spectral projectors. The formula is neat and holds for deterministic matrix perturbations. Second, we calculate the expected projection distance between the empirical singular subspaces and true singular subspaces. {Our method allows obtaining arbitrary $k$-th order approximation of the expected projection distance.} Third,
we prove the non-asymptotical normal approximation of the projection distance with different levels of bias corrections. {By the $\ceil{\log(d_1+d_2)}$-th order bias corrections, the asymptotical normality holds under optimal signal-to-noise ration (SNR) condition where $d_1$ and $d_2$ denote the matrix sizes.} In addition, it shows that higher order approximations are unnecessary when $|d_1-d_2|=O((d_1+d_2)^{1/2})$. 
Finally, we  provide comprehensive simulation results to merit our theoretic discoveries. 

{Unlike the existing results, our approach is non-asymptotical and the convergence rates are established. Our method allows the rank $r$ to diverge as fast as $o((d_1+d_2)^{1/3})$. Moreover, our method requires no eigen-gap condition (except the SNR) and no constraints between $d_1$ and $d_2$. }
\end{abstract}

\begin{keyword}[class=MSC]
	\kwd[Primary ]{62H10}
	\kwd{62H25}
	\kwd[; secondary ]{62G20}
\end{keyword}
  
\begin{keyword}
\kwd{singular value decomposition}
\kwd{projection distance}
\kwd{normal approximation}
%\kwd{confidence region}
\kwd{random matrix theory}
\kwd{spectral perturbation}
\end{keyword}

\end{frontmatter}

\section{Introduction}
Matrix singular value decomposition (SVD) is a powerful tool for various purposes across diverse fields. In numerical linear algebra, SVD has been successfully applied for solving linear inverse problems, low-rank matrix approximation and etc. See, e.g., \citep{golub2012matrix}, for more examples. In many machine learning tasks, SVD is crucial for designing computationally efficient algorithms, such as matrix and tensor completion (\citep{cai2010singular}, \citep{keshavan2010matrix}, \citep{candes2010power}, \citep{xia2017polynomial}, \citep{xia2017statistically}), and phase retrieval (\citep{ma2017implicit}, \citep{candes2015phase}), where SVD is often applied for generating a warm initial point for non-convex optimization algorithms. In statistical data analysis, SVD is superior for denoising and dimension reduction. For instance, SVD, as a dimension reduction tool, is used for text classification in \citep{kim2005dimension}. See also \citep{li2007directional}. In \citep{shabalin2013reconstruction}, SVD shows appealing performances in low rank matrix denoising. More specifically, in \citep{donoho2014minimax}, they proved that statistically minimax optimal matrix denoising can be attained via precise singular value thresholding. Recently, matrix SVD is generalized to tensor SVD for tensor denoising, see \citep{xia2019sup} and \citep{zhang2018tensor}.

The perturbation analysis is critical for advancing the theoretical developments of SVD for low-rank matrix denoising where the observed data matrix often equals a low-rank information matrix plus a noise matrix. 
The deterministic perturbation bounds of matrix SVD have been well established by Davis-Kahan (\citep{davis1970rotation}, \citep{yu2014useful}) and Wedin (\citep{wedin1972perturbation}) many years ago. 
Among those deterministic perturbation bounds, one simple yet useful bound shows that the perturbation of singular vectors is governed by the so-called signal-to-noise ratio (SNR) where "signal" refers to the smallest non-zero singular value of the information matrix and the "noise" refers to the spectral norm of the noise matrix. It is a quite general result since the bound does not rely on the wellness of alignments between the singular subspaces of the information and of the noise matrices. Such a general bound turns out to be somewhat satisfactorily sharp when the noise matrix  contains i.i.d. random entries. However, more refined characterizations of singular vectors are needed on the frontiers of statistical inference for matrix SVD. The Davis-Kahan Theorem and Wedin's perturbation bounds are illustrated by the non-zero smallest singular value of the information matrix, where the effects of those large singular values are usually missing. Moreover, the exact numerical factor is also not well recognized. %These important questions will be answered in this paper with the ingredients from random matrix theory (RMT). 

The behavior of singular values and singular vectors of low rank perturbations of large rectangular random matrices is popular in recent years. They play a key role in statistical inference with diverse applications. See \cite{li2018two}, \cite{naumov2017bootstrap}, \cite{tang2018limit} for some examples in network testing. 
The asymptotic limits of singular values and singular vectors were firstly developed by \citep{benaych2012singular}, where the convergence rate of the largest singular value was also established. Recently, by \citep{ding2017high}, more precise non-asymptotic concentration bounds for empirical singular values were obtained. Meanwhile, \citep{ding2017high} also proved non-asymptotic perturbation bounds of empirical singular vector  when the associated singular value has multiplicity $1$. 
In a recent work \citep{bao2018singular}, the authors studied the asymptotic limit distributions of the empirical singular subspaces when (scaled) singular values are bounded. Specifically, they showed that if the noise matrix has Gaussian distribution, then the limit distribution of the projection distance  is also Gaussian. {Unlike these prior arts (\cite{ding2017high}, \cite{bao2018singular}), we focus on the non-asymptotical normal approximations of the joint singular subspaces in a different regime. Our approach allows the rank to diverge, and imposes no constraints between $d_1$ and $d_2$. In addition, we establish the convergence rates and impose no eigen-gap conditions (except SNR).}

 In \citep{xia2018confidence}, the low rank matrix regression model is investigated where the author proposed a de-biased estimator built on nuclear normal penalized least squares estimator. The de-biased estimator ends up with an analogous form of the low rank perturbation of rectangular random matrices. Then, non-asymptotical normal approximation theory of  the projection distance is proved, under near optimal sample size requirement. The paramount observation is that the mean value in the limit normal distribution is significantly larger than its standard deviation. As a result,  a much larger than regular sample size requirement is necessary to tradeoff the estimation error of the expected projection distance. Most recently, \citep{chen2018asymmetry} revealed an interesting phenomenon of the perturbation of eigenvalues and eigenvectors of such non-asymmetric random perturbations, showing that the perturbation of eigen structures is much smaller than the singular structures. In addition, some non-asymptotic perturbation bounds of empirical singular vectors can be found in \citep{koltchinskii2016perturbation},\citep{bloemendal2016principal} and \citep{abbe2017entrywise}. The minimax optimal bounds of singular subspace estimation for low rank perturbations of large rectangular random matrices are established in \citep{cai2018rate}.

Our goal is to investigate the central limit theorems of singular subspaces in the low rank perturbation model of large rectangular random matrices.  As illustrated in \citep{xia2018confidence}, the major difficulty arises from how to precisely determine the expected projection distance. One conclusive contribution of this paper is an explicit representation formula of the empirical spectral projector. This explicit representation formula allows us to obtain  precise characterization of the {(non-asymptotical)} expected projection distance. After those higher order bias corrections, we prove normal approximation of the singular subspaces with optimal {(in the consistency regime)} SNR requirement. For better presenting the results and highlighting the contributions, let's begin with introducing the standard notations.  
We denote $M=U\Lambda V^{\tran}$ the unknown $d_1\times d_2$ matrix where $U\in\RR^{d_1\times r}$ and $V\in\RR^{d_2\times r}$ are its left and right singular vectors. The diagonal matrix $\Lambda={\rm diag}(\lambda_1,\cdots,\lambda_r)$ contains $M$'s non-increasing positive singular values. The observed data matrix $\hat M\in\RR^{d_1\times d_2}$ satisfies the additive model:
\begin{equation}\label{eq:hatM_def}
\hat M = M+Z\quad {\rm where}\quad Z_{j_1j_2}\stackrel{i.i.d.}{\sim} \calN(0,1)\quad {\rm for}\ 1\leq j_1\leq d_1, 1\leq j_2\leq d_2.
\end{equation}
Here, we fix the noise variance to be $1$, just for simplicity. Let $\hat U\in\RR^{d_1\times r}$ and $\hat V\in\RR^{d_2\times r}$ be the top-$r$ left and right singular vectors of $\hat M$. Let $\hat \Lambda={\rm diag}(\hat\lambda_1,\cdots,\hat{\lambda}_r)$ denote the top-$r$ singular values of $\hat M$. %Given $\hat M$, our ultimate goal is to construct confidence regions of $U$ and $V$. 
 We focus on the projection distance between the empirical and true singular subspaces which is defined by
\begin{equation}\label{eq:dist_def}
{\rm dist}^2[(\hat U,\hat V), (U,V)]:=\|\hat U\hat U^{\tran}-UU^{\tran}\|_{\rm F}^2+\|\hat V\hat V^{\tran}-VV^{\tran}\|_{\rm F}^2. 
\end{equation}
By Davis-Kahan Theorem (\citep{davis1970rotation}) or Wedin's $\sin\Theta$ theorem (\citep{wedin1972perturbation}), ${\rm dist}^2[(\hat U,\hat V), (U,V)]$ is non-trivial on the event $\{\lambda_r>2\|Z\|\}$. It is well-known that $\|Z\|=O_P(\sqrt{d_{\max}})$ where $\|\cdot\|$ denotes the spectral norm and $d_{\max}=\max\{d_1,d_2\}$. Therefore, it is convenient to consider $\lambda_r\gtrsim \sqrt{d_{\max}}$. {In this paper, we focus on the consistency regime\footnote{{We note that, in RMT literature (see, e.g., \citep{bao2018singular},\citep{ding2017high}), many works studied the problem when $\lambda_r=O(\sqrt{d_{\max}})$ and $\lambda_r\gtrsim (d_1d_2)^{1/4}$. In this paper, we focus on the regime when empirical singular subspaces are consistent, i.e., $\EE{\rm dist}^2[(\hat U,\hat V), (U,V)]\to 0$ when $d_{\max}\to \infty$. As shown in \citep{cai2018rate}, such consistency requires $\sqrt{rd_{\max}}/\lambda_r\to 0$.}} so that the empirical singular subspaces are consistent which requires $\lambda_r\gg \sqrt{rd_{\max}}$.}  See, e.g., \citep{tao2012topics}, \citep{koltchinskii2016perturbation},  \citep{cai2018rate} and \citep{vershynin2010introduction}. 

%To construct confidence regions of $(U,V)$, our strategy is to investigate the distribution of ${\rm dist}^2[(\hat U,\hat V), (U,V)]$. 
Our contributions are summarized as follows. 
\begin{enumerate}
\item An explicit representation formula of $\hat U\hat U^{\tran}$ and $\hat V\hat V^{\tran}$ is derived. In particular, $\hat U\hat U^{\tran}$ and $\hat V\hat V^{\tran}$ can be completely determined by a sum of a series of matrix product involving only $\Lambda, UU^{\tran}, U_{\perp}U_{\perp}^{\tran}, VV^{\tran}, V_{\perp}V_{\perp}^{\tran}$ and $Z$, where $U_{\perp}\in\RR^{d_1\times (d_1-r)}$ and $V\in\RR^{d_2\times (d_2-r)}$ are chosen so that $(U,U_{\perp})$ and $(V,V_{\perp})$ are orthonormal matrices. To derive such a useful representation formula, we apply the Reisz formula, combinatoric formulas, contour integrals, residue theorem and generalized Leibniz rule.  It worths to point out that the representation formula is deterministic  as long as $\|Z\|<\lambda_r/2$. We believe that this representation formula of spectral projectors should be of independent interest for various purposes. 

\item By the representation formula, we prove the normal approximation of $\hat\eps_1:=\big({\rm dist}^2[(\hat U,\hat V), (U,V)]-\EE{\rm dist}^2[(\hat U,\hat V), (U,V)]\big)/\big(\sqrt{8d_{\star}}\|\Lambda^{-2}\|_{\rm F}\big)$ where $d_{\star}=d_1+d_2-2r$. In particular, we show that $\hat\eps_1$ converges to a standard normal distribution as long as $\sqrt{rd_{\max}}/\lambda_r\to 0$ and $r^3/d_{\max}\to 0$ as $d_1, d_2\to\infty$. The required SNR is optimal in the consistency regime. {Note that our result allows $r$ to diverge as fast as $o((d_1+d_2)^{1/3})$. In addition, no conditions on the eigen-gaps (except $\lambda_r$) are required. The convergence rate is also established. }
The proof strategy is based on the Gaussian isoperimetric inequality and Berry-Esseen theorem. 

\item The unknown $\EE{\rm dist}^2[(\hat U,\hat V), (U,V)]$ plays the role of centering in $\hat\eps_1$. To derive user-friendly normal approximations of ${\rm dist}^2[(\hat U,\hat V), (U,V)]$, it suffices to explicitly calculate its expectation (non-asymptotically). By the representation formula of $\hat U\hat U^{\tran}$ and $\hat V \hat V^{\tran}$, we obtain approximations of $\EE{\rm dist}^2[(\hat U,\hat V), (U,V)]$. Different levels of approximating $\EE{\rm dist}^2[(\hat U,\hat V), (U,V)]$ ends up with different levels of bias corrections.  These levels of approximations are
\begin{enumerate}
\item Level-$1$ approximation: $B_1=2d_{\star}\|\Lambda^{-1}\|_{\rm F}^2$. The approximation error is
$$
\Big|\EE{\rm dist}^2[(\hat U,\hat V), (U,V)]-B_1 \Big|=O\Big(\frac{rd_{\max}^2}{\lambda_r^4}\Big).
$$
\item Level-$2$ approximation: $B_2=2(d_{\star}\|\Lambda^{-1}\|_{\rm F}^2-\Delta_d^2\|\Lambda^{-2}\|_{\rm F}^2)$ where $\Delta_d=d_1-d_2$. Then, 
$$
\Big|\EE{\rm dist}^2[(\hat U,\hat V), (U,V)]-B_2 \Big|=O\Big(\frac{rd_{\max}^3}{\lambda_r^6}\Big).
$$
\item Level-$k$ approximation: $B_k=2d_{\star}\|\Lambda^{-1}\|_{\rm F}^2-2\sum_{k_0=2}^{k}(-1)^{k_0}\Delta_d(d_{1-}^{k_0-1}-d_{2-}^{k_0-1})\|\Lambda^{-k_0}\|_{\rm F}^2$ where $d_{1-}=d_1-r$ and $d_{2-}=d_2-r$. Then, for all $k\geq 2$, 
\begin{align*}
\Big|\EE{\rm dist}^2[(&\hat U,\hat V), (U,V)]-B_k \Big|\\
=&O\Big(\frac{r^2d_{\max}}{\lambda_r^4}+\frac{r^2}{\sqrt{d_{\max}}}\cdot \Big(\frac{d_{\max}}{\lambda_r^2}\Big)^3+r\Big(\frac{C_2d_{\max}}{\lambda_r^2}\Big)^{k+1}\Big)
\end{align*}
where $C_2>0$ is some absolute constant.
\end{enumerate}
The aforementioned approximation errors hold whenever $C_2d_{\max}/\lambda_r^2<1$. %If $k=\ceil{\log d_{\max}}$, the level-$\ceil{\log d_{\max}}$ approximation error is $O\big(r^2d_{\max}/\lambda_r^4+r^2 \big(d_{\max}^3/\lambda_r^6\big)/\sqrt{d_{\max}}\big)$. 
{ Explicit formula for $B_\infty$ is also derived. }
An intriguing fact is that if $|d_1-d_2|=O(\sqrt{d_{\max}})$, i.e., the two dimensions of $M$ are comparable, then higher level approximations have similar effects as the Level-$1$ approximation. Simulation results show that Level-$1$ approximation by $B_1$ is indeed satisfactorily accurate when $d_1=d_2$. 

\item By replacing $\EE{\rm dist}^2[(\hat U,\hat V), (U,V)]$ with $B_k$, we prove the normal approximation of ${\rm dist}^2[(\hat U,\hat V), (U,V)]$. Different levels of bias corrections require different levels of SNR conditions for the asymptotical normality. For instance, we prove the normal approximation of $\hat\eps_2:=\big({\rm dist}^2[(\hat U,\hat V), (U,V)]-B_{\ceil{\log d_{\max}}}\big)/\big(\sqrt{8d_{\star}}\|\Lambda^{-2}\|_{\rm F}\big)$ with the $\ceil{\log d_{\max}}$-th order bias correction. More exactly, we show the asymptotical normality of $\hat\eps_2$ when $\sqrt{rd_{\max}}/\lambda_r\to 0$  and $r^3/d_{\max}\to 0$ as $d_1, d_2\to\infty$. {As far as we know, this is the first result about the limiting distribution of singular subspaces which allows the rank $r$ to diverge. Meanwhile, no eigen-gap conditions (except SNR) are needed. 
Since our normal approximation is non-asymptotical, we impose no constraints on the relation between $d_1$ and $d_2$.}
%\item Finally, by RMT, we define an efficient estimator of singular values $\lambda_j$ which is more accurate than the empirical singular value $\hat\lambda_j$. Indeed, we define $\tilde\Lambda={\rm diag}(\tilde\lambda_1,\cdots,\tilde\lambda_r)$ where 
%$$
%\tilde\lambda_j^2=\frac{\hat\lambda_j^2-(d_1+d_2)}{2}+\frac{\sqrt{(\hat\lambda_j^2-(d_1+d_2))^2-4d_1d_2}}{2},\quad \textrm{ for all } 1\leq j\leq r.
%$$
%Together with $\tilde\Lambda$, we define the plug-in estimator of $B_k$ as 
%$$
%\hat B_k=2d_{\star}\|\tilde\Lambda^{-1}\|_{\rm F}^2-2\sum_{k_0=2}^k(-1)^{k_0}(d_{1-}^{k_0-1}-d_{2-}^{k_0-1})(d_{1-}-d_{2-})\|\tilde\Lambda^{-k_0}\|_{\rm F}^2.
%$$
%In the end, we prove that $\big({\rm dist}^2[(\hat U,\hat V), (U,V)]-\hat B_{\ceil{\log d_{\max}}}\big)/\sqrt{8d_{\star}}\|\tilde\Lambda^{-2}\|_{\rm F}$ is asymptotically standard normal as long as $\sqrt{rd_{\max}}/\lambda_r\to 0$  and $r^3/d_{\max}\to 0$, based on which, confidence regions of $(U,V)$ are constructed. {The result is neat and the convergence rate  is also established. }
\end{enumerate}
The rest of the paper is organized as follows. In Section~\ref{sec:hat_Theta}, we derive the explicit representation formula of empirical spectral projector. The representation formula is established under deterministic perturbation. We prove normal approximation of  ${\rm dist}^2[(\hat U,\hat V), (U,V)] $ in Section~\ref{sec:normal}. Especially, we show that ${\rm dist}^2[(\hat U,\hat V), (U,V)] $ is asymptotically normal under optimal SNR conditions. In Section~\ref{sec:approx_Edist} and ~\ref{sec:explicit_CLT}, we develop the arbitrarily $k$-th level approximations of $\EE{\rm dist}^2[(\hat U,\hat V), (U,V)] $ and its corresponding normal approximation, where requirements for SNR are specifically developed. In Section~\ref{sec:CR}, we propose confidence regions and discuss about data-adaptive shrinkage estimator of singular values.
%In Section~\ref{sec:data_CR}, we propose a shrinkage estimator of the unknown singular values and prove the asymptotical normality of ${\rm dist}^2[(\hat U,\hat V), (U,V)] $ with data-dependent centering and normalizing factors. 
We then display  comprehensive simulation results in Section~\ref{sec:sim}, where, for instance, we show the importance of higher order approximations of $\EE{\rm dist}^2[(\hat U,\hat V), (U,V)] $ when the matrix has unbalanced sizes and the effectiveness of shrinkage estimation of singular values.  The proofs are collected in Section~\ref{sec:proof} and Appendix~\ref{sec:proof}.

\section{Representation formula of spectral projectors}\label{sec:hat_Theta}
Let $A$ and $X$ be $d\times d$ symmetric matrices. The matrix $A$ has rank $r=\rank(A)\leq d$. Denote the eigen-decomposition of $A$,
$$
A=\Theta\Lambda \Theta^{\tran}=\sum_{j=1}^r \lambda_j \theta_j\theta_j^{\tran}
$$
where $\Lambda={\rm diag}(\lambda_1,\cdots,\lambda_r)$ contains the non-zero non-increasing eigenvalues of $A$. The $d\times r$ matrix $\Theta=(\theta_1,\cdots,\theta_r)$ consists of $A$'s eigenvectors. The noise matrix $X$ satisfies $\|X\|< \min_{1\leq i\leq r}\frac{|\lambda_i|}{2}$ where $\|\cdot\|$ denotes the matrix operator norm. Given  $\hat A=A+X$ where $A$ and $X$ are unknown, our goal is to estimate $\Theta$. 
We denote $\hat\Theta=(\hat\theta_1,\cdots,\hat\theta_r)$ the $d\times r$ matrix containing the eigenvectors of $\hat A$ with largest $r$ eigenvalues in absolute values. Therefore, $\hat\Theta$ represents the empirical version of $\Theta$. We derive the representation formula of $\hat \Theta \hat \Theta^{\tran}$ for deterministic $X$. The formula is useful for various of purposes. 

To this end, define $\Theta_{\perp}=(\theta_{r+1},\cdots,\theta_{d})$ the $d\times (d-r)$ matrix such that $(\Theta,\Theta_{\perp})$ is orthonormal. Define the spectral projector,
$$
\mathfrak{P}^{\perp}=\sum_{j=r+1}^{d}\theta_j\theta_j^{\tran}=\Theta_{\perp}\Theta_{\perp}^{\tran}.
$$
%For notational simplicity, we denote $P_j=\theta_j\theta_j^{\tran}$ for all $1\leq j\leq d$. 
Also, define
$$
\mathfrak{P}^{-1}:=\sum_{j=1}^r\lambda_j^{-1}\theta_j\theta_j^{\tran}=\Theta\Lambda^{-1}\Theta^{\tran}.
$$
Meanwhile, we write $\mathfrak{P}^{-k}=\Theta\Lambda^{-k}\Theta^{\tran}$ for all $k\geq 1$.  For notational simplicity, we denote $\mathfrak{P}^0=\mathfrak{P}^{\perp}$ and  denote the $k$-th order perturbation term
\begin{equation}\label{eq:calS_def}
\calS_{A,k}(X)=\sum_{\bs: s_1+\cdots+s_{k+1}=k}(-1)^{1+\tau(\bs)}\cdot\mathfrak{P}^{-s_1}X\mathfrak{P}^{-s_2}X\cdots X\mathfrak{P}^{-s_{k+1}}
\end{equation}
where $\bs=(s_1,\cdots,s_{k+1})$ contains non-negative integer indices and 
$$
\tau(\bs)=\sum_{j=1}^{k+1}\mathbb{I}(s_{j}>0)
$$
denotes the number of positive indices in $\bs$. For instance, if $k=1$, we have 
$$
\calS_{A,1}(X)=\mathfrak{P}^{-1}X\mathfrak{P}^{\perp}+\mathfrak{P}^{\perp}X\mathfrak{P}^{-1}.
$$
If $k=2$, by considering $s_1+s_2+s_3=2$ for $s_1,s_2,s_3\geq 0$ in (\ref{eq:calS_def}), we  have  
\begin{align*}
\calS_{A,2}(X)=&\big(\mathfrak{P}^{-2}X\mathfrak{P}^{\perp}X\mathfrak{P}^{\perp}+\mathfrak{P}^{\perp}X\mathfrak{P}^{-2}X\mathfrak{P}^{\perp}+\mathfrak{P}^{\perp}X\mathfrak{P}^{\perp}X\mathfrak{P}^{-2}\big)\\
&-\big(\mathfrak{P}^{\perp}X\mathfrak{P}^{-1}X\mathfrak{P}^{-1}+\mathfrak{P}^{-1}X\mathfrak{P}^{\perp}X\mathfrak{P}^{-1}+\mathfrak{P}^{-1}X\mathfrak{P}^{-1}X\mathfrak{P}^{\perp}\big).
\end{align*}

\begin{theorem}\label{thm:hat_Theta}
If $\|X\|<\min_{1\leq i\leq r}\frac{|\lambda_i|}{2}$, then 
$$
\hat\Theta\hat\Theta^{\tran}-\Theta\Theta^{\tran}=\sum_{k\geq 1}\calS_{A,k}(X)
$$
where $\calS_{A,k}(X)$ is defined in (\ref{eq:calS_def}) and  we set $\mathfrak{P}^{0}=\mathfrak{P}^{\perp}=\Theta_{\perp}\Theta_{\perp}^{\tran}$ for notational simplicity. 
\end{theorem}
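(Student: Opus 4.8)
The plan is to derive the representation formula via the holomorphic functional calculus (Riesz projection) combined with a Neumann-series expansion in $X$. Write the spectral projector onto the top-$r$ eigenspace of $\hat A = A + X$ as a contour integral
\[
\hat\Theta\hat\Theta^{\tran} = \frac{1}{2\pi\mathfrak{i}}\oint_{\Gamma} \bigl(z\mathbf{I} - \hat A\bigr)^{-1}\,dz,
\]
where $\Gamma$ is a circle in the complex plane enclosing exactly the nonzero eigenvalues $\lambda_1,\dots,\lambda_r$ of $A$ and, thanks to the hypothesis $\|X\| < \min_i |\lambda_i|/2$ (and the attendant Weyl bound $|\hat\lambda_i - \lambda_i| \le \|X\|$), also enclosing exactly the top-$r$ eigenvalues of $\hat A$ while excluding $0$. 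The same formula with $\hat A$ replaced by $A$ gives $\Theta\Theta^{\tran}$.

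Next I would expand the resolvent. On $\Gamma$, write $(z\mathbf{I}-\hat A)^{-1} = (z\mathbf{I}-A)^{-1}\bigl(\mathbf{I} - X(z\mathbf{I}-A)^{-1}\bigr)^{-1} = \sum_{k\ge 0} (z\mathbf{I}-A)^{-1}\bigl(X(z\mathbf{I}-A)^{-1}\bigr)^{k}$; the series converges uniformly on $\Gamma$ because $\|X\|\cdot\sup_{z\in\Gamma}\|(z\mathbf{I}-A)^{-1}\|$ can be made $<1$ by the choice of $\Gamma$ (radius comparable to $\lambda_r$) together with the SNR hypothesis. Subtracting the $A$-projector kills the $k=0$ term, so
\[
\hat\Theta\hat\Theta^{\tran}-\Theta\Theta^{\tran} = \sum_{k\ge 1}\frac{1}{2\pi\mathfrak{i}}\oint_{\Gamma}(z\mathbf{I}-A)^{-1}X(z\mathbf{I}-A)^{-1}X\cdots X(z\mathbf{I}-A)^{-1}\,dz,
\]
with $k$ copies of $X$ and $k+1$ resolvents. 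It remains to identify the $k$-th integral with $\calS_{A,k}(X)$ of \eqref{eq:calS_def}. Decompose each resolvent spectrally: $(z\mathbf{I}-A)^{-1} = \frac{1}{z}\mathfrak{P}^{\perp} + \sum_{s\ge 1} z^{-1}\bigl(\tfrac{1}{z}\bigr)^{\,?}\cdots$ — more precisely $(z\mathbf{I}-A)^{-1} = z^{-1}\mathfrak{P}^{\perp} + \sum_{j=1}^r (z-\lambda_j)^{-1}\theta_j\theta_j^{\tran}$, and expand $(z-\lambda_j)^{-1} = \sum_{s\ge 0}\lambda_j^{\,s} z^{-s-1}$ for $|z|$ on the outer part — but the cleaner route is to note that the product of the $k+1$ resolvents is a rational function of $z$ whose only singularities inside $\Gamma$ are at $z=0$ and at $z=\lambda_1,\dots,\lambda_r$, and to evaluate the contour integral by the residue theorem. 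Grouping terms by which "block" ($\mathfrak{P}^{\perp}$ or $\mathfrak{P}^{-s}$) each resolvent contributes, the residue at $z=0$ together with the residues at the $\lambda_j$'s produces exactly the sum over compositions $s_1+\cdots+s_{k+1}=k$ of $\mathfrak{P}^{-s_1}X\cdots X\mathfrak{P}^{-s_{k+1}}$; the sign $(-1)^{1+\tau(\bs)}$ emerges from the combinatorics of how many residue/derivative operations (generalized Leibniz rule) act versus how many plain $z^{-1}$ factors remain, i.e., from the parity of the number of nonzero indices.

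The main obstacle is precisely this last bookkeeping step: verifying that the residue computation on a product of $k+1$ resolvents, each split into a pole-at-$0$ piece and $r$ simple-pole pieces, reassembles into the stated closed form with the correct coefficient $(-1)^{1+\tau(\bs)}$ and no extra multiplicities. I expect to handle it by a double induction — on $k$ and on the number of nonzero $s_j$'s — or, equivalently, by the generalized Leibniz rule applied to $\frac{d^{m}}{dz^{m}}$ of a product evaluated at $z=0$, tracking that each differentiation that "lands" on a factor converts a $z^{-1}$ into a $\mathfrak{P}^{-s}$ with $s\ge 1$ and contributes the sign flip. A secondary technical point is justifying term-by-term contour integration of the Neumann series (uniform convergence on the compact $\Gamma$, already noted) and confirming that $\Gamma$ can be chosen once and for all to enclose the correct spectra of both $A$ and $\hat A$; both follow from $\|X\|<\min_i|\lambda_i|/2$ via Weyl's inequality, so they are routine. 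Finally, convergence of the outer series $\sum_{k\ge1}\calS_{A,k}(X)$ to the left-hand side is inherited from the convergence of the Neumann series, since the norm of the $k$-th term is bounded by $\bigl(\sup_{z\in\Gamma}\|(z\mathbf{I}-A)^{-1}\|\,\|X\|\bigr)^{k}$ times a constant.
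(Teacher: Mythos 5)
Your plan is essentially the paper's own proof: the paper likewise writes $\hat\Theta\hat\Theta^{\tran}$ as a Riesz contour integral, expands the resolvent in a Neumann series (valid since $\|\calR_A(\eta)X\|\leq 2\|X\|/\lambda_r<1$ on the contour), and identifies each term with $\calS_{A,k}(X)$ by the residue theorem. The bookkeeping you defer — the sign $(-1)^{1+\tau(\bs)}$ and the multiplicities — is handled in the paper exactly as you anticipate, by reducing to a scalar contour integral, noting the residue at infinity vanishes, and computing the residue at $\eta=0$ with the generalized Leibniz rule and a binomial identity.
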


Apparently,  by eq. (\ref{eq:calS_def}), a simple fact is
$$
\big\|\calS_{A,k}(X) \big\|\leq {2k \choose k}\cdot \frac{\|X\|^k}{\lambda_r^k}\leq \Big(\frac{4\|X\|}{\lambda_r}\Big)^k,\quad \forall \ k\geq 1.
$$

\section{Normal approximation of spectral projectors}\label{sec:normal}
Recall from (\ref{eq:hatM_def}) that $\hat M=M+Z\in\RR^{d_1\times d_2}$ with  $M=U\Lambda V^{\tran}$ where $U\in\RR^{d_1\times r}$ and $V\in\RR^{d_2\times r}$ satisfying $U^{\tran}U=I_r$ and $V^{\tran}V=I_r$. The diagonal matrix $\Lambda={\rm diag}(\lambda_1,\cdots,\lambda_r)$ contains non-increasing positive singular values of $M$.  Let $\hat U$ and $\hat V$ be $\hat M$'s top-$r$ left and right singular vectors. We derive the normal approximation of 
$$
{\rm dist}^2[(\hat U,\hat V), (U,V)]=\|\hat U\hat U^{\tran}-UU^{\tran}\|_{\rm F}^2+\|\hat V\hat V^{\tran}-VV^{\tran}\|_{\rm F}^2,
$$
which is often called the (squared) projection distance on Grassmannians.  To this end, we clarify important notations which shall appear frequently throughout the paper. 

To apply the representation formula from Theorem~\ref{thm:hat_Theta}, we turn $\hat M, M$ and $Z$ into symmetric matrices. 
For notational consistency,  we create $(d_1+d_2)\times (d_1+d_2)$ symmetric matrices as 
$$
\hat A =\left(\begin{array}{cc}0&\hat M\\ \hat M^{\tran}&0 \end{array}\right),\quad A =\left(\begin{array}{cc}0& M\\ M^{\tran}&0 \end{array}\right)\quad {\rm and}\quad X =\left(\begin{array}{cc}0& Z\\ Z^{\tran}&0 \end{array}\right).
$$
The model (\ref{eq:hatM_def}) is thus translated into $\hat A=A+X$. 
The symmetric matrix $A$ has eigenvalues $\lambda_1\geq \cdots\geq \lambda_r\geq \lambda_{-r}\geq \cdots\geq \lambda_{-1}$ where $\lambda_{-i}=-\lambda_i$ for $1\leq i\leq r$. 
The eigenvectors corresponding to $\lambda_i$ and $\lambda_{-i}$ are, respectively, 
$$
\theta_i=\frac{1}{\sqrt{2}}\left(\begin{array}{c}u_i\\ v_i\end{array}\right)\quad {\rm and}\quad \theta_{-i}=\frac{1}{\sqrt{2}}\left(\begin{array}{c}u_i\\ -v_i\end{array}\right)
$$
for $1\leq i\leq r$, where $\{u_i\}_{i=1}^r$ and $\{v_i\}_{i=1}^r$ are the columns of $U$ and $V$. Here, $\{\theta_i\}_{i=1}^r$ may not be uniquely defined if the singular value $\lambda_i$ has multiplicity larger than $1$. However, the spectral projector $UU^{\tran}$ and $VV^{\tran}$ are unique regardless of the multiplicities of $M$'s singular values. 

Following the same routine of notations, we denote 
$$
\Theta=(\theta_1,\cdots,\theta_r,\theta_{-r},\cdots,\theta_{-1})\in\RR^{(d_1+d_2)\times 2r}
$$ 
and $\Theta_{\perp}\in\RR^{(d_1+d_2)\times (d_1+d_2-2r)}$ such that $(\Theta,\Theta_{\perp})$ is an orthonormal matrix. Then, 
$$
\Theta\Theta^{\tran}=\sum_{1\leq |j|\leq r}\theta_j\theta_j^{\tran}=\left(\begin{array}{cc}UU^{\tran}&0\\0&VV^{\tran}\end{array}\right)
$$
and 
$$
\hat\Theta\hat\Theta^{\tran}=\sum_{1\leq |j|\leq r}\hat\theta_j\hat\theta_j^{\tran}=\left(\begin{array}{cc}\hat U\hat U^{\tran}&0\\0&\hat V\hat V^{\tran}\end{array}\right)
$$
where $\hat U$ and $\hat V$ represent $\hat M$'s top-$r$ left and right singular vectors. Similarly, for all $k\geq 1$, denote 
\begin{align*}
\mathfrak{P}^{-k}=\sum_{1\leq |j|\leq r}\frac{1}{\lambda_j^k}\theta_j\theta_j^{\tran}
=\begin{cases}
\left(\begin{array}{cc}0& U\Lambda^{-k}V^{\tran}\\V\Lambda^{-k}U^{\tran}&0\end{array}\right)&\textrm{if } k \textrm{ is odd}\\
&\\
\left(\begin{array}{cc}U\Lambda^{-k}U^{\tran}&0\\ 0&V\Lambda^{-k}V^{\tran}\end{array}\right)&\textrm{if } k \textrm{ is even}.
\end{cases}
\end{align*}
The orthogonal spectral projector is written as 
$$
\mathfrak{P}^{\perp}=\Theta_{\perp}\Theta_{\perp}^{\tran}=\left(\begin{array}{cc}U_{\perp}U_{\perp}^{\tran}&0\\ 0&V_{\perp}V_{\perp}^{\tran}\end{array}\right)
$$ 
where $(U,U_{\perp})$ and $(V,V_{\perp})$ are orthonormal matrices. Actually, the columns of $\Theta_{\perp}$ can be explicitly expressed by the columns of $U_{\perp}$ and $V_{\perp}$. Indeed, if we denote the columns of $\Theta_{\perp}\in\RR^{(d_1+d_2)\times (d_1+d_2-2r)}$ by 
$$
\Theta_{\perp}=(\theta_{r+1},\cdots,\theta_{d_1}, \theta_{-r-1},\cdots,\theta_{-d_2})
$$
, then we can write
$$
\theta_{j_1}=\left(\begin{array}{c}u_{j_1}\\ 0\end{array}\right)\quad {\rm and}\quad \theta_{-j_2}=\left(\begin{array}{c}0\\v_{j_2}\end{array}\right)
$$
for $r+1\leq j_1\leq d_1$ and $r+1\leq j_2\leq d_2$. 
%These notations will be convenient for us to prove Theorem~\ref{thm:distE_normal}. 

By the above notations, it is clear that 
\begin{align*}
{\rm dist}^2[(\hat U,\hat V), (U,V)]=\|\hat\Theta\hat\Theta^{\tran}-\Theta\Theta^{\tran}\|_{\rm F}^2.
\end{align*}
It suffices to prove the normal approximation of $\|\hat\Theta\hat\Theta^{\tran}-\Theta\Theta^{\tran}\|_{\rm F}^2$. Observe that 
\begin{align*}
\||\hat\Theta\hat\Theta^{\tran}-\Theta\Theta^{\tran}\|_{\rm F}^2=&4r-2\big<\Theta\Theta^{\tran},\hat\Theta\hat\Theta^{\tran}\big>=-2\big<\Theta\Theta^{\tran},\hat\Theta\hat\Theta^{\tran}-\Theta\Theta^{\tran}\big>.
\end{align*}
By Theorem~\ref{thm:hat_Theta} and $\Theta\Theta^{\tran}\mathfrak{P}^{\perp}=0$, we can write 
\begin{align}
{\rm dist}^2[(\hat U, \hat V), (U,V)]=&-2\sum_{k\geq 2}\big<\Theta\Theta^{\tran}, \calS_{A,k}(X)\big>\nonumber\\
=&2\|\mathfrak{P}^{\perp}X\mathfrak{P}^{-1}\|_{\rm F}^2-2\sum_{k\geq 3}\big<\Theta\Theta^{\tran}, \calS_{A,k}(X)\big>.\label{eq:dist_formula}
\end{align}
where we used the fact $\mathfrak{P}^{\perp}\mathfrak{P}^{\perp}=\mathfrak{P}^{\perp}$ so that 
\begin{align*}
-2\big<\Theta\Theta^{\tran}, \calS_{A,2}\big>=&2\big<\Theta\Theta^{\tran},\mathfrak{P}^{-1}X\mathfrak{P}^{\perp}X\mathfrak{P}^{-1}\big>\\
=&2\tr\big(\mathfrak{P}^{-1}X\mathfrak{P}^{\perp}X\mathfrak{P}^{-1}\big)=2\|\mathfrak{P}^{\perp}X\mathfrak{P}^{-1}\|_{\rm F}^2.
\end{align*}
We prove CLT of ${\rm dist}^2[(\hat U, \hat V), (U,V)]$ with an explicit normalizing factor.  
\begin{theorem}\label{thm:distE_normal}
Suppose $d_{\max}\geq 3r$ where $d_{\max}=\max\{d_1,d_2\}$. There exist absolute constants $C_1,C_2,c_1>0$ such that if $\lambda_r\geq C_1\sqrt{d_{\max}}$, then for any $s\geq 1$,
\begin{align*}
\sup_{x\in\RR}\bigg|\PP\bigg(&\frac{{\rm dist}^2[(\hat U, \hat V), (U,V)]-\EE\ {\rm dist}^2[(\hat U, \hat V), (U,V)]}{\sqrt{8d_{\star}}\|\Lambda^{-2}\|_{\rm F}}\leq x\bigg)-\Phi(x) \bigg|\\
&\quad \leq C_2s^{1/2}\Big(\frac{\sqrt{r}}{\|\Lambda^{-2}\|_{\rm F}\lambda_r^2}\Big)\cdot \frac{(rd_{\max})^{1/2}}{\lambda_r}+e^{-s}+e^{-c_1d_{\max}}\\
&\hspace{2cm}+C_2\Big(\frac{\|\Lambda^{-1}\|_{\rm F}^4}{\|\Lambda^{-2}\|_{\rm F}^2}\Big)^{3/2}\cdot \frac{1}{\sqrt{d_{\max}}},
\end{align*}
where $d_{\star}=d_1+d_2-2r$ and $\Phi(x)$ denotes the c.d.f. of standard normal distributions. By setting $s=\frac{\lambda_r}{\sqrt{rd_{\max}}}$, we conclude that 
\begin{align*}
\sup_{x\in\RR}\bigg|\PP\bigg(&\frac{{\rm dist}^2[(\hat U, \hat V), (U,V)]-\EE\ {\rm dist}^2[(\hat U, \hat V), (U,V)]}{\sqrt{8d_{\star}}\|\Lambda^{-2}\|_{\rm F}}\leq x\bigg)-\Phi(x) \bigg|\\
&\quad \leq C_2\Big(\frac{\sqrt{r}}{\|\Lambda^{-2}\|_{\rm F}\lambda_r^2}\Big)\cdot \sqrt{\frac{(rd_{\max})^{1/2}}{\lambda_r}}+e^{-\lambda_r/\sqrt{rd_{\max}}}+e^{-c_1d_{\max}}\\
&\hspace{2cm}+C_2\Big(\frac{\|\Lambda^{-1}\|_{\rm F}^4}{\|\Lambda^{-2}\|_{\rm F}^2}\Big)^{3/2}\cdot \frac{1}{\sqrt{d_{\max}}}.
\end{align*}
\end{theorem}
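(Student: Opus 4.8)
The plan is to establish a central limit theorem for $\mathrm{dist}^2[(\hat U,\hat V),(U,V)]$ after subtracting its mean and rescaling, by isolating the dominant quadratic-in-$Z$ term and controlling the remainder. By the expansion \eqref{eq:dist_formula}, we may write
\[
\mathrm{dist}^2[(\hat U,\hat V),(U,V)] = 2\|\mathfrak{P}^{\perp}X\mathfrak{P}^{-1}\|_{\rm F}^2 - 2\sum_{k\geq 3}\big\langle \Theta\Theta^{\tran}, \calS_{A,k}(X)\big\rangle =: L + R,
\]
where $L$ is the leading quadratic form in the Gaussian entries of $Z$ and $R$ is a higher-order remainder. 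The first step is to show that $R$ is negligible at the scale $\sqrt{8d_\star}\|\Lambda^{-2}\|_{\rm F}$: using the operator-norm bound $\|\calS_{A,k}(X)\|\leq (4\|X\|/\lambda_r)^k$ from Theorem~\ref{thm:hat_Theta}, the rank constraint $\mathrm{rank}(\Theta\Theta^{\tran})=2r$, and the concentration $\|X\|=\|Z\|\leq C\sqrt{d_{\max}}$ with probability $1-e^{-c_1 d_{\max}}$, one bounds $|R|$ by a geometric series $\sum_{k\geq 3} r (C\sqrt{d_{\max}}/\lambda_r)^k \lesssim r(d_{\max}/\lambda_r^2)^{3/2}$; dividing by the normalizer and comparing with $\|\Lambda^{-1}\|_{\rm F}$, $\|\Lambda^{-2}\|_{\rm F}$ produces the term $C_2(\|\Lambda^{-1}\|_{\rm F}^4/\|\Lambda^{-2}\|_{\rm F}^2)^{3/2} d_{\max}^{-1/2}$. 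A Lipschitz-concentration argument (Gaussian isoperimetry, since $Z\mapsto\mathrm{dist}^2$ is Lipschitz off a negligible event) then transfers this high-probability bound into a bound on the Kolmogorov distance with the extra $e^{-s}$ slack coming from truncating $\|Z\|$ at level $\sqrt{d_{\max}}+s\cdot(\text{something})$, which is the source of the free parameter $s$ and the $e^{-s}$ term.

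The second and main step is the CLT for the leading term $L=2\|\mathfrak{P}^{\perp}X\mathfrak{P}^{-1}\|_{\rm F}^2$. Writing this out in the block coordinates, $\mathfrak{P}^{\perp}X\mathfrak{P}^{-1}$ has Frobenius norm squared equal to a sum of the form $\sum_{i,j_1} (\text{entry of }U_\perp^{\tran}Z V\Lambda^{-1})^2 + (\text{entry of }V_\perp^{\tran}Z^{\tran}U\Lambda^{-1})^2$, i.e. $L = 2\sum_{\ell} \xi_\ell^2$ where the $\xi_\ell$ are \emph{independent} centered Gaussians (they are linear images of $Z$ under orthogonal row/column projections against $\Lambda^{-1}$), with variances $\lambda_i^{-2}$ each appearing $d_{1-}+d_{2-}=d_\star$ times. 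Hence $L$ is (twice) a weighted sum of independent $\chi^2_1$ variables, with $\mathbb{E}L = 2d_\star\|\Lambda^{-1}\|_{\rm F}^2$ and $\mathrm{Var}(L) = 8 d_\star \|\Lambda^{-2}\|_{\rm F}^2$. The Berry–Esseen theorem for sums of independent (non-identically distributed) random variables then gives
\[
\sup_x\Big|\PP\Big(\tfrac{L-\mathbb{E}L}{\sqrt{8d_\star}\|\Lambda^{-2}\|_{\rm F}}\leq x\Big)-\Phi(x)\Big| \leq C\,\frac{\sum_\ell \mathbb{E}|\xi_\ell^2-\mathbb{E}\xi_\ell^2|^3}{\big(\sum_\ell \mathrm{Var}(\xi_\ell^2)\big)^{3/2}} \asymp \frac{\|\Lambda^{-2}\|_{\rm F}^{?}}{\ldots},
\]
and a careful bookkeeping of the third-moment ratio—using that each $\lambda_i^{-2}$ is repeated $d_\star$ times so the ratio is $O(\|\Lambda^{-6}\|_{\rm F}^{?}/(d_\star^{1/2}\|\Lambda^{-2}\|_{\rm F}^3))$—yields, after bounding $\|\Lambda^{-6}\|$ crudely by $\|\Lambda^{-2}\|_{\rm F}^2\lambda_r^{-2}$ and $r$ factors, the stated term $C_2(\sqrt r/(\|\Lambda^{-2}\|_{\rm F}\lambda_r^2))\cdot (rd_{\max})^{1/2}/\lambda_r$ after absorbing the $s^{1/2}$ loss from the remainder step. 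Finally one notes $\mathbb{E}\,\mathrm{dist}^2 = \mathbb{E}L + \mathbb{E}R$ and $\mathbb{E}R$ is of the same negligible order, so recentering by $\mathbb{E}\,\mathrm{dist}^2$ rather than $\mathbb{E}L$ costs nothing.

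Combining the two steps via the triangle inequality for Kolmogorov distance (CLT for $L$, plus the additive-perturbation lemma $\sup_x|\PP(L+R\leq x\sigma)-\PP(L\leq x\sigma)|\leq \PP(|R|>\delta)+\sup_x|\PP(L\leq x\sigma)-\PP(L\leq x\sigma\pm\delta)|$, the last term controlled by the anti-concentration / boundedness of the normal density) gives the first displayed bound; the second display is the specialization $s=\lambda_r/\sqrt{rd_{\max}}$, chosen to balance the $s^{1/2}(rd_{\max})^{1/2}/\lambda_r$ term against $e^{-s}$. I expect the main obstacle to be the bookkeeping in step two: showing that the third-absolute-moment ratio in Berry–Esseen really collapses to the clean factor $\sqrt r\,(\|\Lambda^{-2}\|_{\rm F}\lambda_r^2)^{-1}$ times the SNR quantity requires carefully exploiting the $d_\star$-fold repetition of each $\lambda_i^{-2}$ and that $r\ll d_{\max}$, rather than applying Berry–Esseen as a black box; a secondary subtlety is making the Lipschitz/isoperimetric truncation argument for $R$ fully rigorous on the event $\{\|Z\|<\lambda_r/2\}$ where the representation formula is valid, and tracking how the truncation level feeds the parameter $s$.
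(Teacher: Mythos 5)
Your treatment of the leading term $L=2\|\mathfrak{P}^{\perp}X\mathfrak{P}^{-1}\|_{\rm F}^2$ is essentially the paper's: $L\stackrel{\rm d}{=}2\sum_{j=1}^{d_{\star}}\|\Lambda^{-1}z_j\|_{\ell_2}^2$ with $z_j\stackrel{i.i.d.}{\sim}\calN(0,I_r)$ (the block-independence you gloss over is exactly the zero cross-covariance of $U^{\tran}Z$ with $U_{\perp}^{\tran}Z$, etc., which the paper spells out), ${\rm Var}(L)=8d_{\star}\|\Lambda^{-2}\|_{\rm F}^2$, and Berry--Esseen applies. But here is the first problem: that Berry--Esseen step produces the \emph{last} error term $C_2\big(\|\Lambda^{-1}\|_{\rm F}^4/\|\Lambda^{-2}\|_{\rm F}^2\big)^{3/2}d_{\max}^{-1/2}$, not the first one; the term $C_2 s^{1/2}\big(\sqrt{r}/(\|\Lambda^{-2}\|_{\rm F}\lambda_r^2)\big)\cdot(rd_{\max})^{1/2}/\lambda_r$ has nothing to do with third moments of $L$ and must come entirely from the remainder $R=-2\sum_{k\geq 3}\langle\Theta\Theta^{\tran},\calS_{A,k}(X)\rangle$. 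You have swapped the origins of the two terms, and your plan to make the Berry--Esseen ratio "collapse" to the SNR-dependent quantity is chasing the wrong target.

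The genuine gap is in your control of $R$. Your plan is a worst-case bound on the event $\{\|Z\|\leq C\sqrt{d_{\max}}\}$: $|R|\lesssim r\sum_{k\geq3}(4\|X\|/\lambda_r)^k\lesssim r\,d_{\max}^{3/2}/\lambda_r^3$. After dividing by $\sqrt{8d_{\star}}\|\Lambda^{-2}\|_{\rm F}\asymp\sqrt{d_{\max}}\|\Lambda^{-2}\|_{\rm F}$ this gives a deviation of order $\big(\sqrt{r}/(\|\Lambda^{-2}\|_{\rm F}\lambda_r^2)\big)\cdot\sqrt{r}\,d_{\max}/\lambda_r$, which exceeds the claimed first term by a factor of order $\sqrt{d_{\max}/s}$; with this route the asymptotic normality would need $\lambda_r\gg\sqrt{r}\,d_{\max}$ rather than the optimal $\sqrt{rd_{\max}}$, so the stated theorem cannot be reached. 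Since the theorem centers by $\EE\,{\rm dist}^2$, what must be controlled is the \emph{fluctuation} $R-\EE R$, and the missing idea is the one the paper uses: define the soft-truncated functional $f_t(X)=R\cdot\phi\big(\|X\|/(t\sqrt{d_{\max}})\big)$, show it is globally Lipschitz in $X$ with constant $\asymp t^2 r d_{\max}/\lambda_r^3$ (Lemma~\ref{lem:fX_Lip}), and apply the Gaussian isoperimetric inequality to get $|R-\EE R|\lesssim \sqrt{s}\,r d_{\max}/\lambda_r^3$ with probability $1-e^{-s}$ --- a $\sqrt{d_{\max}}$ improvement over the magnitude bound, which is exactly what yields $s^{1/2}\big(\sqrt{r}/(\|\Lambda^{-2}\|_{\rm F}\lambda_r^2)\big)\sqrt{rd_{\max}}/\lambda_r$. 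In particular, the free parameter $s$ and the $e^{-s}$ term are the deviation level and failure probability of this concentration inequality (the truncation level is fixed at $\asymp\sqrt{d_{\max}}$), not a variable truncation of $\|Z\|$ as you describe; one also needs the small extra step of comparing $\EE f_t(X)$ with $\EE R$ on the complement of the spectral-norm event. Once $R-\EE R$ is controlled at this scale, your assembly via the anti-concentration/Lipschitz property of $\Phi$ and the choice $s=\lambda_r/\sqrt{rd_{\max}}$ is correct and matches the paper.
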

By Theorem~\ref{thm:distE_normal}, the asymptotical normality holds as long as 
\begin{equation}\label{eq:asy_cond}
\Big(\frac{\sqrt{r}}{\|\Lambda^{-2}\|_{\rm F}\lambda_r^2}\Big)\cdot \sqrt{\frac{(rd_{\max})^{1/2}}{\lambda_r}}\to 0\quad {\rm and }\quad \Big(\frac{\|\Lambda^{-1}\|_{\rm F}^4}{\|\Lambda^{-2}\|_{\rm F}^2}\Big)^{3/2}\cdot \frac{1}{\sqrt{d_{\max}}}\to 0
\end{equation}
as $d_1,d_2\to \infty$.  If $\sqrt{r}=O(\lambda_r^2\|\Lambda^{-2}\|_{\rm F})$, then the first condition in (\ref{eq:asy_cond}) is equivalent to $\frac{\sqrt{rd_{\max}}}{\lambda_r}\to 0$. 
Such SNR condition {is optimal in the consistency regime}. In addition, Cauchy-Schwartz inequality implies that $\|\Lambda^{-1}\|_{\rm F}^4\leq r\cdot \|\Lambda^{-2}\|_{\rm F}^2$. Therefore, the second condition in (\ref{eq:asy_cond}) holds when 
$$
\frac{r^{3}}{d_{\max}}\to 0\quad \textrm{ as } d_1, d_2\to \infty.
$$
{Therefore,  $r$ is allowed to grow as fast as $o\big((d_1+d_2)^{1/3}\big)$. }

\begin{remark}
The normalization factor $\sqrt{8d_{\star}}\|\Lambda^{-2}\|_{\rm F}$ comes from the fact 
$$
{\rm Var}\big(2\|\mathfrak{P}^{-1}X\mathfrak{P}^{\perp}\|_{\rm F}^2\big)=8d_{\star}\|\Lambda^{-2}\|_{\rm F}^2. 
$$
{ We remark that Theorem~\ref{thm:distE_normal} is non-asymptotical and no constraints between $d_1$ and $d_2$ are needed.}
\end{remark}
Note that $\EE{\rm dist}^2[(\hat U,\hat V), (U,V)]$ in Theorem~\ref{thm:distE_normal} is not transparent yet. 
Calculating $\EE{\rm dist}^2[(\hat U,\hat V), (U,V)]$ needs delicate analysis. If we approximate $\EE{\rm dist}^2[(\hat U,\hat V), (U,V)]$ by its leading term $2\EE\|\mathfrak{P}^{-1}X\mathfrak{P}^{\perp}\|_{\rm F}^2$, we obtain 
$$
\EE\ {\rm dist}^2[(\hat U,\hat V), (U,V)]=[2+o(1)]\cdot d_{\star}\|\Lambda^{-1}\|_{\rm F}^2.
$$
%Clearly, $\EE{\rm dist}^2[(\hat U,\hat V), (U,V)]$ is approximately $\sqrt{d_{\max}}$ factor of the normalizion $\sqrt{8d_{\star}}\|\Lambda^{-2}\|_{\rm F}$. 
The primary subject of section~\ref{sec:approx_Edist} is to approximate $\EE{\rm dist}^2[(\hat U,\hat V), (U,V)]$ to a higher accuracy. 

\section{Approximating the bias}\label{sec:approx_Edist}
%To obtain more explicit normal approximation of ${\rm dist}^2[(\hat U,\hat V), (U,V)]$, we need to derive precise characterizations of  ${\rm dist}^2[(\hat U,\hat V), (U,V)]$. 
Recall (\ref{eq:dist_formula}), we have
\begin{align*}
\EE\ {\rm dist}^2[(\hat U,\hat V), (U,V)]=2\EE\|\mathfrak{P}^{\perp}X\mathfrak{P}^{-1}\|_{\rm F}^2-2\sum_{k\geq 2}\EE\big<\Theta\Theta^{\tran}, \calS_{A,2k}(X)\big>
\end{align*}
where we used the fact $\EE\ \calS_{A,2k+1}(X)=0$ for any positive integer $k\geq 1$. 
We aim to determine $\EE\|\mathfrak{P}^{\perp}X\mathfrak{P}^{-1}\|_{\rm F}^2$ and $\EE\big<\Theta\Theta^{\tran}, \calS_{A,{2k}}(X)\big>$ for all $k\geq 2$. Apparently, by obtaining explicit formulas of $\EE\big<\Theta\Theta^{\tran}, \calS_{A,2k}(X)\big>$ for larger $k$s, we end up with more precise approximation of $\EE\ {\rm dist}^2[(\hat U,\hat V), (U,V)]$. In Lemma~\ref{lem:first_order_approx}-\ref{lem:fourth_order_approx}, we provide arbitrarily $k$-th order approximation of the bias.

\begin{lemma}[First order approximation]\label{lem:first_order_approx}
The following equation holds
$$
\EE\|\mathfrak{P}^{\perp}X\mathfrak{P}^{\perp}\|_{\rm F}^2=d_{\star}\|\Lambda^{-1}\|_{\rm F}^2
$$
where $d_{\star}=d_1+d_2-2r$. Moreover, if $\lambda_r\geq C_1\sqrt{d_{\max}}$ for some large enough constant $C_1>0$, then 
$$
\Big| \EE\ {\rm dist}^2[(\hat U,\hat V), (U,V)]-2d_{\star}\|\Lambda^{-1}\|_{\rm F}^2\Big|\leq C_2r\Big(\frac{d_{\max}}{\lambda_r^2}\Big)^2
$$
where $C_2>0$ is an absolute constant (depending on the constant $C_1$). 
\end{lemma}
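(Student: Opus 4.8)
The plan is to handle the first display by an exact Gaussian second‑moment computation, and the error bound by a truncation argument built on (\ref{eq:dist_formula}) and Theorem~\ref{thm:hat_Theta}. For the first display, I would use the block forms of $\mathfrak{P}^{\perp}$ and $\mathfrak{P}^{-1}$ recorded above: multiplying them out, $\mathfrak{P}^{\perp}X\mathfrak{P}^{-1}$ is block‑diagonal with blocks $U_{\perp}U_{\perp}^{\tran}ZV\Lambda^{-1}U^{\tran}$ and $V_{\perp}V_{\perp}^{\tran}Z^{\tran}U\Lambda^{-1}V^{\tran}$. Since $U,V,U_{\perp},V_{\perp}$ have orthonormal columns, the Frobenius norm collapses to $\|U_{\perp}^{\tran}ZV\Lambda^{-1}\|_{\rm F}^2+\|V_{\perp}^{\tran}Z^{\tran}U\Lambda^{-1}\|_{\rm F}^2$, and rotational invariance of the i.i.d.\ Gaussian ensemble makes $U_{\perp}^{\tran}ZV$ and $V_{\perp}^{\tran}Z^{\tran}U$ into $(d_1-r)\times r$ and $(d_2-r)\times r$ matrices with i.i.d.\ $\calN(0,1)$ entries. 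Taking expectations entrywise against the diagonal weights $\lambda_j^{-2}$ gives $(d_1-r)\|\Lambda^{-1}\|_{\rm F}^2+(d_2-r)\|\Lambda^{-1}\|_{\rm F}^2=d_{\star}\|\Lambda^{-1}\|_{\rm F}^2$.

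For the error bound, recall from (\ref{eq:dist_formula}) that on $\cE:=\{\|X\|<\lambda_r/2\}$ one has ${\rm dist}^2[(\hat U,\hat V),(U,V)]=2\|\mathfrak{P}^{\perp}X\mathfrak{P}^{-1}\|_{\rm F}^2-2\sum_{k\geq 3}\langle\Theta\Theta^{\tran},\calS_{A,k}(X)\rangle$. I would fix an absolute constant $C_0$ and work on the smaller event $\cE_0:=\{\|X\|\leq C_0\sqrt{d_{\max}}\}$; choosing $C_1$ large enough relative to $C_0$, the hypothesis $\lambda_r\geq C_1\sqrt{d_{\max}}$ forces $4\|X\|/\lambda_r\leq 1/2$ on $\cE_0$. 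On $\cE_0$ the estimate $\|\calS_{A,k}(X)\|\leq(4\|X\|/\lambda_r)^k$ from just after Theorem~\ref{thm:hat_Theta}, combined with $|\langle\Theta\Theta^{\tran},B\rangle|\leq\rank(\Theta\Theta^{\tran})\|B\|=2r\|B\|$, gives $\sum_{k\geq 3}|\langle\Theta\Theta^{\tran},\calS_{A,k}(X)\rangle|\leq 2r\sum_{k\geq 3}(4\|X\|/\lambda_r)^k\leq r/2$, a bounded dominating function that licenses exchanging expectation and summation. Because the law of $X$ is symmetric, $\cE_0$ is invariant under $X\mapsto-X$, and $\calS_{A,k}(-X)=(-1)^k\calS_{A,k}(X)$, every odd‑$k$ term vanishes in expectation, so only even $k\geq 4$ survive and $\big|\sum_{k\geq 4}\EE[\langle\Theta\Theta^{\tran},\calS_{A,k}(X)\rangle\1_{\cE_0}]\big|\leq 4r(4/\lambda_r)^4\EE\|X\|^4\leq C r d_{\max}^2/\lambda_r^4$, using $\EE\|X\|^4=\EE\|Z\|^4\leq C d_{\max}^2$.

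Collecting the pieces, and using that ${\rm dist}^2\leq 4r$ deterministically (from $\|\hat\Theta\hat\Theta^{\tran}-\Theta\Theta^{\tran}\|_{\rm F}^2=4r-2\langle\Theta\Theta^{\tran},\hat\Theta\hat\Theta^{\tran}\rangle$ with $\langle\Theta\Theta^{\tran},\hat\Theta\hat\Theta^{\tran}\rangle\geq 0$), one gets
\[
\EE\,{\rm dist}^2-2d_{\star}\|\Lambda^{-1}\|_{\rm F}^2=-2\,\EE\big[\|\mathfrak{P}^{\perp}X\mathfrak{P}^{-1}\|_{\rm F}^2\1_{\cE_0^c}\big]-2\sum_{k\geq 4}\EE\big[\langle\Theta\Theta^{\tran},\calS_{A,k}(X)\rangle\1_{\cE_0}\big]+\EE\big[{\rm dist}^2\,\1_{\cE_0^c}\big].
\]
The middle term is $O(rd_{\max}^2/\lambda_r^4)$ by the previous paragraph; the first is at most $2\|\Lambda^{-1}\|_{\rm F}^2\,\EE[\|X\|^2\1_{\cE_0^c}]$ and the last at most $4r\,\PP(\cE_0^c)$, and both are exponentially small once $C_0$ is chosen so that the standard Gaussian bound $\PP(\|Z\|>C_0\sqrt{d_{\max}})\leq e^{-c d_{\max}}$ (and the matching tail control of $\EE[\|Z\|^2\1_{\cE_0^c}]$) applies. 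Hence the three contributions together are at most $C_2 r(d_{\max}/\lambda_r^2)^2$, as claimed.

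The main obstacle is the truncation bookkeeping rather than any single hard estimate: outside $\cE_0$ the series of Theorem~\ref{thm:hat_Theta} need not converge, so the representation formula cannot be manipulated termwise there, and one must fall back on the crude deterministic bound ${\rm dist}^2\leq 4r$ together with sharp operator‑norm tail bounds for the Gaussian matrix $Z$ to see that this region, and the corresponding truncation of the quadratic term, are negligible against $r(d_{\max}/\lambda_r^2)^2$. The remaining ingredients — the block computation of the leading quadratic form and the geometric‑series estimate of the higher‑order terms — are routine.
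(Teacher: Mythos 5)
Your argument follows essentially the same route as the paper's: expand ${\rm dist}^2[(\hat U,\hat V),(U,V)]$ via Theorem~\ref{thm:hat_Theta} and (\ref{eq:dist_formula}), compute the leading quadratic term exactly by the block/rotational-invariance calculation (giving $d_{\star}\|\Lambda^{-1}\|_{\rm F}^2$; the $\mathfrak{P}^{\perp}X\mathfrak{P}^{\perp}$ in the lemma statement is a typo for $\mathfrak{P}^{\perp}X\mathfrak{P}^{-1}$, which you read correctly), annihilate the odd-order terms by the symmetry $X\mapsto -X$, and control the even terms of order at least $4$ through $\|\calS_{A,k}(X)\|\le(4\|X\|/\lambda_r)^k$, the rank-$2r$ duality $|\langle\Theta\Theta^{\tran},B\rangle|\le 2r\|B\|$, and Gaussian bounds on $\|Z\|$, summing a geometric series. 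The paper does this without any truncation, taking unconditional expectations of each $\calS_{A,2k}(X)$ and using $\EE\|X\|^{2k}\le(C\sqrt{d_{\max}})^{2k}$ uniformly in $k$; your explicit good event together with the crude bound ${\rm dist}^2\le 4r$ off it is, if anything, more attentive to the fact that the series representation is only valid on $\{\|X\|<\lambda_r/2\}$.

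One step, as written, does not hold on the whole range of $\lambda_r$ permitted by the lemma: you dismiss the truncation remainders $\EE\big[\|\mathfrak{P}^{\perp}X\mathfrak{P}^{-1}\|_{\rm F}^2\1_{\cE_0^{\rm c}}\big]$ and $\EE\big[{\rm dist}^2\,\1_{\cE_0^{\rm c}}\big]$ as ``exponentially small, hence at most $C_2r(d_{\max}/\lambda_r^2)^2$.'' With $\cE_0=\{\|X\|\le C_0\sqrt{d_{\max}}\}$ these remainders are only of size $re^{-cd_{\max}}$ (up to polynomial factors), and that is \emph{not} bounded by $C_2rd_{\max}^2/\lambda_r^4$ uniformly: the lemma imposes no upper bound on $\lambda_r$, so for $\lambda_r$ super-exponentially large in $d_{\max}$ the target is far smaller than $re^{-cd_{\max}}$. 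The repair is immediate and stays inside your scheme: truncate at a level proportional to $\lambda_r$, say $\cE_0=\{\|X\|\le\lambda_r/8\}$, which still gives series ratio $4\|X\|/\lambda_r\le 1/2$ and keeps the symmetry argument (the event is invariant under $X\mapsto-X$). On the complement one has $\1_{\cE_0^{\rm c}}\le(8\|X\|/\lambda_r)^4$, so $\EE[{\rm dist}^2\,\1_{\cE_0^{\rm c}}]\le 4r\cdot 8^4\,\EE\|X\|^4/\lambda_r^4\lesssim rd_{\max}^2/\lambda_r^4$ and likewise $\EE[\|\mathfrak{P}^{\perp}X\mathfrak{P}^{-1}\|_{\rm F}^2\1_{\cE_0^{\rm c}}]\le 2\|\Lambda^{-1}\|_{\rm F}^2\,\EE[\|X\|^2\1_{\cE_0^{\rm c}}]\lesssim rd_{\max}^2/\lambda_r^4$ (alternatively, Gaussian concentration gives $\PP(\|X\|\ge\lambda_r/8)\le e^{-c\lambda_r^2}$ once $C_1$ is large, and $e^{-c\lambda_r^2}\lambda_r^4=O(1)$, which also beats the target). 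With that one adjustment your proof is complete, and in fact it closes a point the paper's own proof glosses over, since the paper applies the series identity under an unrestricted expectation.
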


In Lemma~\ref{lem:second_order_approx}, we calculate $\EE\big<\Theta\Theta^{\tran}, \calS_{A,4}(X)\big>$. 
 It yields the second order approximation of $\EE\ {\rm dist}^2[(\hat U,\hat V),(U,V)]$. %Moreover, Lemma~\ref{lem:second_order_approx}-\ref{lem:third_order_approx} imply that the first order approximation $2d_{\star}\|\Lambda^{-1}\|_{\rm F}^2$ over-estimates 
%$\EE\ {\rm dist}^2[(\hat U,\hat V),(U,V)]$, while the second order approximation $2(d_{\star}\|\Lambda^{-1}\|_{\rm F}^2-\Delta_d^2\|\Lambda^{-2}\|_{\rm F})^2$ under-estimates $\EE\ {\rm dist}^2[(\hat U,\hat V),(U,V)]$. 
\begin{lemma}[Second order approximation]\label{lem:second_order_approx}
The following fact holds
$$
\Big|\EE\big<\Theta\Theta^{\tran},\calS_{A,4}(X)\big>-\Delta_d^2\|\Lambda^{-2}\|_{\rm F}^2\Big|\leq C_2\frac{r^2d_{\max}}{\lambda_r^4}
$$
where  $d_{\star}=d_1+d_2-2r$ and $\Delta_d=d_1-d_2$ and $C_2$ is an absolute constant. 
Moreover, if $\lambda_r\geq C_1\sqrt{d_{\max}}$ for some large enough constant $C_1>0$, then 
\begin{align*}
\Big|\EE\ {\rm dist}^2[(\hat U,\hat V), (U,V)]-2\big(d_{\star}\|\Lambda^{-1}\|_{\rm F}^2-&\Delta_d^2\|\Lambda^{-2}\|_{\rm F}^2\big) \Big|\\
&\leq C_1\frac{r^2d_{\max}}{\lambda_r^4}+C_2r\Big(\frac{d_{\max}}{\lambda_r^2}\Big)^3
\end{align*}
where  $C_2,C_3>0$ are absolute constants (depending on $C_1$). 
\end{lemma}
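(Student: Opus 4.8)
The plan is to compute $\EE\big<\Theta\Theta^{\tran},\calS_{A,4}(X)\big>$ term by term using the block structure of $\mathfrak{P}^{-k}$ and $\mathfrak{P}^{\perp}$ together with Wick's formula (Isserlis' theorem) for Gaussian moments, then feed the resulting estimate into the expansion $\EE\ {\rm dist}^2 = 2\EE\|\mathfrak{P}^{\perp}X\mathfrak{P}^{-1}\|_{\rm F}^2 - 2\EE\big<\Theta\Theta^{\tran},\calS_{A,4}(X)\big> - 2\sum_{k\geq 3}\EE\big<\Theta\Theta^{\tran},\calS_{A,2k}(X)\big>$. First I would enumerate the summands in $\calS_{A,4}(X)$: these are products $\mathfrak{P}^{-s_1}X\mathfrak{P}^{-s_2}X\mathfrak{P}^{-s_3}X\mathfrak{P}^{-s_4}X\mathfrak{P}^{-s_5}$ with $s_1+\cdots+s_5=4$ and the sign $(-1)^{1+\tau(\bs)}$. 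When paired against $\Theta\Theta^{\tran}$ inside a trace, $\Theta\Theta^{\tran}\mathfrak{P}^{\perp}=0$ kills every term whose outermost factor on either side is $\mathfrak{P}^{\perp}=\mathfrak{P}^0$; cyclicity of the trace then forces $s_1>0$ (equivalently, the surviving configurations have the two $\mathfrak{P}^0$ factors strictly in the interior), which drastically cuts the list. For each surviving configuration I would take the expectation over the Gaussian $X$, expand via Wick into a sum over pairings of the four $X$'s, and evaluate each pairing as a trace of a product of the deterministic matrices $\Theta\Theta^{\tran}, \mathfrak{P}^{-1},\dots,\mathfrak{P}^{-4},\mathfrak{P}^{\perp}$, using the explicit block forms (odd powers are off-diagonal, even powers and $\mathfrak{P}^{\perp}$ are block-diagonal, and $\Theta\Theta^{\tran}=\diag(UU^{\tran},VV^{\tran})$). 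The point is that $\EE[X M X] = (\tr M)\,\Pi$-type identities for the symmetric block noise $X$ produce factors of $d_1$, $d_2$, or $r$ depending on which block $M$ lives in; the ``dominant'' pairings, after summing signs, collapse to the single clean term $\Delta_d^2\|\Lambda^{-2}\|_{\rm F}^2$ (the $\Delta_d = d_1-d_2$ arising precisely because a $d_1$ and a $d_2$ enter with opposite effective signs), while every other pairing contributes a trace bounded by $O(r^2 d_{\max}/\lambda_r^4)$ since it carries at least four inverse-singular-value factors and at most two dimension factors, one of which is forced to be $r$ (the rank of a projector onto the signal space) rather than $d_{\max}$.

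The second, and technically heavier, half is the tail bound $\big|\sum_{k\geq 3}\EE\big<\Theta\Theta^{\tran},\calS_{A,2k}(X)\big>\big| = O\big(r(d_{\max}/\lambda_r^2)^3\big)$. Here I would not compute exactly; instead I would bound $|\EE\big<\Theta\Theta^{\tran},\calS_{A,2k}(X)\big>| \leq \EE\big[\,|\big<\Theta\Theta^{\tran},\calS_{A,2k}(X)\big>|\,\big]$ and split into the bulk event $\{\|X\|\leq C\sqrt{d_{\max}}\}$ (which has probability $\geq 1-e^{-c d_{\max}}$ by standard Gaussian operator-norm concentration, e.g. \citep{vershynin2010introduction}) and its complement. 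On the bulk event, the pointwise bound $\|\calS_{A,2k}(X)\|\leq (4\|X\|/\lambda_r)^{2k}$ from the display just after Theorem~\ref{thm:hat_Theta}, together with $\rank(\Theta\Theta^{\tran})=2r$ so that $|\big<\Theta\Theta^{\tran},\calS_{A,2k}(X)\big>|\leq 2r\|\calS_{A,2k}(X)\|$, gives a geometric series $\sum_{k\geq 3} 2r (C^2 d_{\max}/\lambda_r^2)^{2k}$ whose sum is $O(r(d_{\max}/\lambda_r^2)^6)$ once $\lambda_r\geq C_1\sqrt{d_{\max}}$ with $C_1$ large; note $(d_{\max}/\lambda_r^2)^6\leq (d_{\max}/\lambda_r^2)^3$, so this is absorbed. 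Actually one must be slightly more careful about the lowest surviving order $k=3$: a crude operator-norm bound gives $(d_{\max}/\lambda_r^2)^3$ directly, and higher $k$ are lower order, so the geometric-series argument yields exactly the claimed $O\big(r(d_{\max}/\lambda_r^2)^3\big)$. The off-event contribution is handled by Cauchy--Schwarz: $\EE\big[|\big<\Theta\Theta^{\tran},\calS_{A,2k}(X)\big>|\,\mathbf{1}\{\|X\|>C\sqrt{d_{\max}}\}\big]\leq \big(\EE|\cdots|^2\big)^{1/2}(\PP(\|X\|>C\sqrt{d_{\max}}))^{1/2}$, where the second factor is exponentially small and the first is controlled because $\|\calS_{A,2k}(X)\|\leq 1$ is never available unconditionally but $\|\calS_{A,2k}(X)\|$ has Gaussian-polynomial growth, so the product is negligible compared to $r(d_{\max}/\lambda_r^2)^3$.

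Finally I would combine the two pieces: the first-order approximation Lemma~\ref{lem:first_order_approx} already gives $\big|\EE\|\mathfrak{P}^{\perp}X\mathfrak{P}^{-1}\|_{\rm F}^2 - d_{\star}\|\Lambda^{-1}\|_{\rm F}^2\big|$ controlled at the relevant order (indeed Lemma~\ref{lem:first_order_approx} states the $\mathfrak{P}^{\perp}X\mathfrak{P}^{\perp}$ version exactly and the error it absorbs is $O(r(d_{\max}/\lambda_r^2)^2)$, which is dominated by the $O(r(d_{\max}/\lambda_r^2)^3)$ term here once $\lambda_r\geq C_1\sqrt{d_{\max}}$ — wait, $(d_{\max}/\lambda_r^2)^2 \geq (d_{\max}/\lambda_r^2)^3$ when $d_{\max}/\lambda_r^2<1$, so this term is actually the larger one and should be tracked; re-examining, the stated bound $C_1 r^2 d_{\max}/\lambda_r^4 + C_2 r(d_{\max}/\lambda_r^2)^3$ does include $r^2 d_{\max}/\lambda_r^4 = r^2(d_{\max}/\lambda_r^2)^2/d_{\max}\cdot d_{\max}$, hmm — the first error term $r^2 d_{\max}/\lambda_r^4$ comes from the $\calS_{A,4}$ computation and dominates $r(d_{\max}/\lambda_r^2)^2$ when $r\geq 1$, so it is the first-order error that gets absorbed into the $\calS_{A,4}$ error, not the reverse). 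Putting these together via the triangle inequality: $\big|\EE\ {\rm dist}^2 - 2(d_{\star}\|\Lambda^{-1}\|_{\rm F}^2 - \Delta_d^2\|\Lambda^{-2}\|_{\rm F}^2)\big| \leq 2\cdot(\text{first-order error}) + 2\cdot(\calS_{A,4}\text{ error}) + 2\cdot(\text{tail error}) \leq C_1 r^2 d_{\max}/\lambda_r^4 + C_2 r(d_{\max}/\lambda_r^2)^3$, as claimed. I expect the main obstacle to be the bookkeeping in the $\calS_{A,4}$ calculation: correctly enumerating the surviving index vectors $\bs$, tracking the signs $(-1)^{1+\tau(\bs)}$ through the Wick pairings, and verifying that all the $d_{\max}^2$-order contributions cancel except for the $\Delta_d^2$ term — an algebraic cancellation that must come out exactly right for the lemma to hold, and which is the real content distinguishing Level-$2$ from Level-$1$.
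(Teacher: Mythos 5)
Your strategy is the same as the paper's: expand $\EE\,{\rm dist}^2[(\hat U,\hat V),(U,V)]=2\EE\|\mathfrak{P}^{\perp}X\mathfrak{P}^{-1}\|_{\rm F}^2-2\EE\big<\Theta\Theta^{\tran},\calS_{A,4}(X)\big>-2\sum_{k\geq 3}\EE\big<\Theta\Theta^{\tran},\calS_{A,2k}(X)\big>$, evaluate the fourth-order term by Gaussian moment calculations using the block structure of $\mathfrak{P}^{-s}$ and $\mathfrak{P}^{\perp}$, and dispose of the tail via $|\big<\Theta\Theta^{\tran},\calS_{A,2k}(X)\big>|\leq 2r(4\|X\|/\lambda_r)^{2k}$ together with moment bounds on $\|X\|$; your Wick-pairing language and the paper's case analysis of the matrices $Q^{(s_1\cdots s_\tau)}_{t_1\cdots t_{\tau-1}}$ are the same computation. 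The tail and the final triangle-inequality step are essentially fine (your geometric series should run in powers $(Cd_{\max}/\lambda_r^2)^k$, not $(\cdot)^{2k}$, which you catch yourself; and the digression about absorbing a ``first-order error'' is both unnecessary, since $\EE\|\mathfrak{P}^{\perp}X\mathfrak{P}^{-1}\|_{\rm F}^2=d_{\star}\|\Lambda^{-1}\|_{\rm F}^2$ is exact, and arithmetically backwards, since $rd_{\max}^2/\lambda_r^4\geq r^2d_{\max}/\lambda_r^4$ whenever $d_{\max}\geq r$).

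The genuine gap is that the first display of the lemma is asserted rather than proved: you claim the dominant pairings ``collapse to $\Delta_d^2\|\Lambda^{-2}\|_{\rm F}^2$'' and that everything else is $O(r^2d_{\max}/\lambda_r^4)$, but you enumerate no configurations, compute no expectations, and you explicitly defer this as ``the main obstacle'' --- which is exactly the content of the lemma. The paper's proof supplies it: after showing (via independence of $U^{\tran}Z$, $U_{\perp}^{\tran}Z$, etc.) that configurations with an odd number of $X$'s between two signal projectors are lower order, only two patterns survive: $\mathfrak{P}^{-s_1}X\mathfrak{P}^{\perp}X\mathfrak{P}^{\perp}X\mathfrak{P}^{\perp}X\mathfrak{P}^{-s_2}$ with $s_1+s_2=4$ (three choices, each with expectation $2d_{1-}d_{2-}\|\Lambda^{-2}\|_{\rm F}^2$, entering with sign $-1$) and $\mathfrak{P}^{-s_1}X\mathfrak{P}^{\perp}X\mathfrak{P}^{-s_2}X\mathfrak{P}^{\perp}X\mathfrak{P}^{-s_3}$ with $s_1+s_2+s_3=4$ (sign $+1$), where $(1,1,2)$ and $(2,1,1)$ each give $2d_{1-}d_{2-}\|\Lambda^{-2}\|_{\rm F}^2$ and $(1,2,1)$ gives, by the exact Wishart second moment of Lemma~\ref{lem:LambdaZ_frob}, $(d_{1-}^2+d_{2-}^2)\|\Lambda^{-2}\|_{\rm F}^2+(d_{1-}+d_{2-})(\|\Lambda^{-2}\|_{\rm F}^2+\|\Lambda^{-1}\|_{\rm F}^4)$. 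Only then does the signed sum $-6d_{1-}d_{2-}+4d_{1-}d_{2-}+d_{1-}^2+d_{2-}^2=(d_{1-}-d_{2-})^2=\Delta_d^2$ emerge, with the leftover terms of size $O(r^2d_{\max}/\lambda_r^4)$. Note also that your heuristic that every non-dominant contribution ``carries at most two dimension factors, one of which is forced to be $r$'' is false term-by-term --- the $(1,2,1)$ configuration carries the full $d_{1-}^2+d_{2-}^2$ --- so the $\Delta_d^2$ identity really rests on cancellation across configurations, not on a per-term bound; without carrying out that bookkeeping the proposal remains a plan along the paper's route rather than a proof.
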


In general, we calculate the arbitrary $k$-th order approximation in Lemma~\ref{lem:fourth_order_approx}. 

\begin{lemma}[Arbitrary $k$-th order approximation]\label{lem:fourth_order_approx}
For a positive integer $k\geq 2$ and $\sqrt{d_{\max}}\geq \log^2d_{\max}$ and $e^{-c_1d_{\max}}\leq \frac{1}{\sqrt{d_{\max}}}$,  the following fact holds
\begin{align*}
\Big|\EE\big<\Theta\Theta^{\tran},\calS_{A,2k}(X)\big> - (-1)^k(d_{1-}^{k-1}-d_{2-}^{k-1})&(d_{1-}-d_{2-})\|\Lambda^{-k}\|_{\rm F}^2 \big|\\
&\leq \frac{C_1(r^2+k)}{\sqrt{d_{\max}}}\cdot\Big(\frac{C_2d_{\max}}{\lambda_r^2}\Big)^k
\end{align*}
where  $c_1, C_1, C_2>0$ are some absolute constants. Then, the following bound holds
\begin{align*}
\Big|\EE\ {\rm dist}^2&[(\hat U,\hat V), (U,V)] - B_k\Big|\\
\leq& C_4\frac{r^2d_{\max}}{\lambda_r^4}+\frac{C_5r^2}{\sqrt{d_{\max}}}\cdot \Big(\frac{d_{\max}}{\lambda^2_r}\Big)^3+C_6r\Big(\frac{C_3d_{\max}}{\lambda_r^2}\Big)^{k+1}
\end{align*}
where $C_3,C_4,C_5,C_6$ are some absolute constants and $B_k$ is defined by
\begin{align}\label{eq:Bk_def}
B_k=2d_{\star}\|\Lambda^{-1}\|_{\rm F}^2-2\sum_{k_0=2}^k(-1)^{k_0}(d_{1-}^{k_0-1}-d_{2-}^{k_0-1})(d_{1-}-d_{2-})\|\Lambda^{-k_0}\|_{\rm F}^2.
\end{align}
\end{lemma}

The second and higher order terms involve the dimension difference $\Delta_d=d_1-d_2$. If $d_1=d_2$, these higher order approximations essentially have similar effects as the first order approximation. 

\begin{remark}\label{rmk:Binfty}
By choosing $k=\ceil{\log d_{\max}}$ so that $(C_3d_{\max}/\lambda_r^2)^{k+1}\lesssim  (d_{\max}/\lambda_r^2)^3/\sqrt{d_{\max}}$, we get 
$$
\Big|\EE\ {\rm dist}^2[(\hat U,\hat V), (U,V)] - B_{\ceil{\log d_{\max}}}\Big|\leq C_4\frac{r^2d_{\max}}{\lambda_r^4}+C_5\frac{r^2}{\sqrt{d_{\max}}}\cdot \Big(\frac{d_{\max}}{\lambda_r^2}\Big)^3
$$
for some absolute constants $C_4, C_5>0$. In addition, for each $1\leq j\leq r$, we have 
$$
2d_{1-}\lambda_j^{-2}-2\sum_{k= 2}^{\infty}(-1)^k(d_{1-}-d_{2-})d_{1-}^{k-1}\lambda_j^{-2k}=\frac{2d_{1-}(\lambda_j^2+d_{2-})}{\lambda_j^2(\lambda_j^2+d_{1-})}
$$
which matches $\EE\|\hat u_j\hat u_j^{\tran}-u_ju_j^{\tran}\|_{\rm F}^2$ developed in \cite[Theorem~2.9]{bao2018singular} if $\min\{\lambda_j-\lambda_{j+1}, \lambda_{j-1}-\lambda_j\}$ is bounded away from $0$ and {$r$ is fixed}. Similarly, we have 
$$
2d_{2-}\lambda_j^{-2}-2\sum_{k=2}^{\infty}(-1)^k(d_{2-}-d_{1-})d_{2-}^{k-1}\lambda_j^{-2k}=\frac{2d_{2-}(\lambda_j^2+d_{1-})}{\lambda_j^2(\lambda_j^2+d_{2-})}
$$
which matches $\EE\|\hat v_j\hat v_j^{\tran}-v_jv_j^{\tran}\|_{\rm F}^2$ developed in \cite[Theorem~2.3]{bao2018singular}. {Compared with \cite{bao2018singular}, our results are non-asymptotical. We impose no eigen-gap conditions and no upper bounds on $r$. }
\end{remark}

\begin{remark}
The proof of Lemma~\ref{lem:fourth_order_approx} imply that if $\lambda_r\geq C_1\sqrt{d_{\max}}$, then
$$
\EE\|\hat U\hat U^{\tran}-UU^{\tran}\|_{\rm F}^2=2\sum_{j=1}^r\frac{d_{1-}(\lambda_j^2+d_{2-})}{\lambda_j^2(\lambda_j^2+d_{1-})}+O\Big(\frac{r^2d_{\max}}{\lambda_r^4}+\frac{r^2}{\sqrt{d_{\max}}}\cdot\frac{d_{\max}^3}{\lambda_r^6}\Big)
$$
and
$$
\EE\|\hat V\hat V^{\tran}-VV^{\tran}\|_{\rm F}^2=2\sum_{j=1}^r\frac{d_{2-}(\lambda_j^2+d_{1-})}{\lambda_j^2(\lambda_j^2+d_{2-})}+O\Big(\frac{r^2d_{\max}}{\lambda_r^4}+\frac{r^2}{\sqrt{d_{\max}}}\cdot\frac{d_{\max}^3}{\lambda_r^6}\Big).
$$
%Note that our results are non-asymptotical and we require no conditions on eigen-gaps, i.e., $\lambda_1$ is allowed to equal $\lambda_r$. 
\end{remark}

\section{Normal approximation after bias corrections}\label{sec:explicit_CLT}
In this section, we prove the normal approximation of ${\rm dist}^2[(\hat U,\hat V),(U,V)]$ with explicit centering and normalizing terms. 
By Theorem~\ref{thm:distE_normal}, it  suffices to substitute $\EE\ {\rm dist}^2[(\hat U,\hat V),(U,V)]$ with the explicit formulas from Lemma~\ref{lem:first_order_approx}-\ref{lem:fourth_order_approx}.

Similarly as in Section~\ref{sec:approx_Edist}, we consider arbitrarily $k$-th levels of bias corrections for  ${\rm dist}^2[(\hat U,\hat V),(U,V)]$. Higher order bias corrections, while involving more complicate bias reduction terms, require lower levels of SNR to guarantee the asymptotical normality. For instance, the first order bias correction in Theorem~\ref{thm:first_order_CLT} requires  $\lambda_r\gg \sqrt{rd_{\max}^{3/2}}$ for asymptotical normality, while the $\ceil{\log d_{\max}}$-th order bias correction in Theorem~\ref{thm:second_order_CLT} only requires optimal $\lambda_r\gg \sqrt{rd_{\max}}$ for asymptotical normality. {Again, the rank $r$ is allowed to diverge as fast as $o\big((d_1+d_2)^{1/3}\big)$.}

\begin{theorem}[First order CLT]\label{thm:first_order_CLT}
Suppose $d_{\max}\geq 3r$. There exist absolute constants $C_1,C_2,C_3,c_1>0$ such that if $\lambda_r\geq C_1\sqrt{d_{\max}}$, then, 
\begin{align*}
\sup_{x\in\RR}\bigg|\PP\bigg(&\frac{ {\rm dist}^2[(\hat U,\hat V),(U,V)]-B_1}{\sqrt{8d_{\star}}\|\Lambda^{-2}\|_{\rm F}}\leq x\bigg)-\Phi(x) \bigg|\\
\leq& C_2\Big(\frac{\sqrt{r}}{\|\Lambda^{-2}\|_{\rm F}\lambda_r^2}\Big)\cdot \sqrt{\frac{(rd_{\max})^{1/2}}{\lambda_r}}+e^{-c_1d_{\max}}+e^{-\lambda_r/\sqrt{rd_{\max}}}\\
&\hspace{2cm}+C_2\Big(\frac{\|\Lambda^{-1}\|_{\rm F}^4}{\|\Lambda^{-2}\|_{\rm F}^2}\Big)^{3/2}\cdot \frac{1}{\sqrt{d_{\max}}}+C_3\frac{rd_{\max}^{3/2}}{\lambda_r^2},
\end{align*}
where $d_{\star}=d_1+d_2-2r$ and $B_1$ is defined by (\ref{eq:Bk_def}). 
\end{theorem}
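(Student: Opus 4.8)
The plan is to combine the two main tools already available: the normal approximation of Theorem~\ref{thm:distE_normal}, which centers ${\rm dist}^2[(\hat U,\hat V),(U,V)]$ at its own expectation, and the first order bias approximation of Lemma~\ref{lem:first_order_approx}, which tells us $\big|\EE\,{\rm dist}^2[(\hat U,\hat V),(U,V)]-B_1\big|\leq C_2 r(d_{\max}/\lambda_r^2)^2$. The only gap between $\EE\,{\rm dist}^2$ and $B_1$ introduces an additive shift in the argument of $\Phi$, so the strategy is to quantify how such a deterministic shift affects the Kolmogorov distance to the normal law.

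First I would write, for $x\in\RR$,
\begin{align*}
\PP\bigg(\frac{{\rm dist}^2[(\hat U,\hat V),(U,V)]-B_1}{\sqrt{8d_{\star}}\|\Lambda^{-2}\|_{\rm F}}\leq x\bigg)
=\PP\bigg(\frac{{\rm dist}^2[(\hat U,\hat V),(U,V)]-\EE\,{\rm dist}^2}{\sqrt{8d_{\star}}\|\Lambda^{-2}\|_{\rm F}}\leq x+\delta\bigg),
\end{align*}
where $\delta=\delta(d_1,d_2,\Lambda):=(\EE\,{\rm dist}^2[(\hat U,\hat V),(U,V)]-B_1)/(\sqrt{8d_{\star}}\|\Lambda^{-2}\|_{\rm F})$ is deterministic. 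By Lemma~\ref{lem:first_order_approx} and $d_{\star}\geq d_{\max}$, $|\delta|\leq C_2 r (d_{\max}/\lambda_r^2)^2/(\sqrt{8d_{\max}}\|\Lambda^{-2}\|_{\rm F})$; using the crude bound $\|\Lambda^{-2}\|_{\rm F}\geq \lambda_r^{-2}$ and simplifying gives $|\delta|\leq C_3 r d_{\max}^{3/2}/\lambda_r^2$ for an absolute constant $C_3$. Then I would apply the triangle inequality:
\begin{align*}
\bigg|\PP\bigg(\frac{{\rm dist}^2-B_1}{\sqrt{8d_{\star}}\|\Lambda^{-2}\|_{\rm F}}\leq x\bigg)-\Phi(x)\bigg|
\leq \sup_{y\in\RR}\bigg|\PP\bigg(\frac{{\rm dist}^2-\EE\,{\rm dist}^2}{\sqrt{8d_{\star}}\|\Lambda^{-2}\|_{\rm F}}\leq y\bigg)-\Phi(y)\bigg|+\big|\Phi(x+\delta)-\Phi(x)\big|.
\end{align*}
The first term is bounded by Theorem~\ref{thm:distE_normal} (with $s=\lambda_r/\sqrt{rd_{\max}}$ substituted, matching the first three error terms in the claimed bound). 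For the second term, the mean value theorem and $\sup_t \Phi'(t)=1/\sqrt{2\pi}\leq 1$ give $|\Phi(x+\delta)-\Phi(x)|\leq |\delta|\leq C_3 r d_{\max}^{3/2}/\lambda_r^2$, uniformly in $x$; this produces exactly the last error term $C_3 rd_{\max}^{3/2}/\lambda_r^2$. Taking the supremum over $x$ and relabeling constants finishes the argument.

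The step requiring the most care is the bookkeeping of constants and the reduction of the $\delta$-bound to the stated form — in particular, verifying that $\|\Lambda^{-2}\|_{\rm F}$ can be lower-bounded cleanly (using $\|\Lambda^{-2}\|_{\rm F}^2\geq \lambda_1^{-4}+\cdots+\lambda_r^{-4}\geq \lambda_r^{-4}$ is wasteful only by a factor of $\sqrt r$, which is absorbed since the claimed term carries a full factor $r$) and that the resulting exponent of $d_{\max}$ and $\lambda_r$ is precisely $d_{\max}^{3/2}/\lambda_r^2$. This is genuinely the only nontrivial obstacle; everything else is a direct invocation of the two cited results plus the Lipschitz property of $\Phi$. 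One should also note that the condition $\lambda_r\geq C_1\sqrt{d_{\max}}$ needed for both Theorem~\ref{thm:distE_normal} and Lemma~\ref{lem:first_order_approx} is exactly the hypothesis of the present theorem, so no extra assumptions are incurred.
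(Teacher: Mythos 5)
Your proposal is correct and follows essentially the same route as the paper: invoke Theorem~\ref{thm:distE_normal} with $s=\lambda_r/\sqrt{rd_{\max}}$, bound the deterministic shift $\delta$ via Lemma~\ref{lem:first_order_approx} together with $\|\Lambda^{-2}\|_{\rm F}\geq\lambda_r^{-2}$ and $d_\star\gtrsim d_{\max}$, and absorb the shift using the Lipschitz property of $\Phi$. The only quibble is that $d_\star\geq d_{\max}$ literally requires $d_{\min}\geq 2r$, which is not assumed; but $d_\star\geq d_{\max}-r\geq \tfrac{2}{3}d_{\max}$ holds under $d_{\max}\geq 3r$, so this only changes the absolute constant $C_3$.
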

By Theorem~\ref{thm:first_order_CLT}, we conclude that
$$
\frac{{\rm dist}^2[(\hat U,\hat V),(U,V)]-2d_{\star}\|\Lambda^{-1}\|_{\rm F}^2}{\sqrt{8d_{\star}}\|\Lambda^{-2}\|_{\rm F}} \stackrel{{\rm d}}{\longrightarrow}\calN(0,1)
$$
as $d_1,d_2\to\infty$ if $\sqrt{r}=O(\|\Lambda^{-2}\|_{\rm F}\lambda_r^2)$ and
$$
\frac{\sqrt{rd_{\max}}+\sqrt{rd_{\max}^{3/2}}}{\lambda_r}\to 0\quad{\rm and}\quad  \Big(\frac{\|\Lambda^{-1}\|_{\rm F}^4}{\|\Lambda^{-2}\|_{\rm F}^2}\Big)^{3/2}\cdot \frac{1}{\sqrt{d_{\max}}}\to0.
$$
The above conditions require $\lambda_r\gg \sqrt{rd_{\max}^{3/2}}$ and $r^3\ll d_{\max}$. The order $d_{\max}^{3/4}$ is larger than the optimal rate $\sqrt{d_{\max}}$. It is improvable if we apply higher order bias corrections. 

\begin{theorem}[Arbitrary $k$-th order CLT]\label{thm:second_order_CLT}
Suppose that $d_{\max}\geq 3r$  and $k\geq 3$. There exist absolute constants $C_0,C_1,C_2,C_3,c_1>0$ such that if $\lambda_r\geq C_1\sqrt{d_{\max}}$, then, 
\begin{align*}
\sup_{x\in\RR}\bigg|\PP\bigg(&\frac{{\rm dist}^2[(\hat U,\hat V),(U,V)]-B_k}{\sqrt{8d_{\star}}\|\Lambda^{-2}\|_{\rm F}}\leq x\bigg)-\Phi(x) \bigg|\\
\leq& C_2\Big(\frac{\sqrt{r}}{\|\Lambda^{-2}\|_{\rm F}\lambda_r^2}\Big)\cdot \sqrt{\frac{(rd_{\max})^{1/2}}{\lambda_r}}+e^{-c_1d_{\max}}+e^{-\lambda_r/\sqrt{rd_{\max}}}+C_0\frac{r^2\sqrt{d_{\max}}}{\lambda_r^2}\\
&+C_2\Big(\frac{\|\Lambda^{-1}\|_{\rm F}^4}{\|\Lambda^{-2}\|_{\rm F}^2}\Big)^{3/2}\cdot \frac{1}{\sqrt{d_{\max}}}+C_1\frac{r^2d_{\max}^2}{\lambda_r^4}+C_2r\sqrt{d_{\max}}\cdot\Big(\frac{C_3d_{\max}}{\lambda_r^2}\Big)^k,
\end{align*}
where $B_k$ is defined by (\ref{eq:Bk_def}). 
\end{theorem}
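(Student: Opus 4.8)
\emph{Proof plan.} The plan is to bootstrap from Theorem~\ref{thm:distE_normal}, which already delivers a Berry--Esseen-type bound for ${\rm dist}^2[(\hat U,\hat V),(U,V)]$ recentered at its own (unknown) expectation, and then pay only the \emph{deterministic} price of swapping that expectation for the explicit surrogate $B_k$, a price controlled by Lemma~\ref{lem:fourth_order_approx}. Write $D={\rm dist}^2[(\hat U,\hat V),(U,V)]$, $\mu=\EE\,D$, $\sigma=\sqrt{8d_{\star}}\|\Lambda^{-2}\|_{\rm F}$, and put $\delta=(\mu-B_k)/\sigma$. Since $\PP\big((D-B_k)/\sigma\le x\big)=\PP\big((D-\mu)/\sigma\le x-\delta\big)$ for every $x$, the triangle inequality gives, uniformly in $x\in\RR$,
$$
\Big|\PP\Big(\tfrac{D-B_k}{\sigma}\le x\Big)-\Phi(x)\Big|\ \le\ \sup_{y\in\RR}\Big|\PP\Big(\tfrac{D-\mu}{\sigma}\le y\Big)-\Phi(y)\Big|\ +\ \sup_{x\in\RR}\big|\Phi(x-\delta)-\Phi(x)\big|,
$$
and the second supremum is at most $|\delta|/\sqrt{2\pi}\le|\delta|$ because $\Phi$ is $(2\pi)^{-1/2}$-Lipschitz. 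So the whole task splits into (i) quoting Theorem~\ref{thm:distE_normal} for the first supremum, and (ii) showing $|\delta|$ does not exceed (up to absolute constants) the three new terms $\frac{r^2\sqrt{d_{\max}}}{\lambda_r^2}$, $\frac{r^2d_{\max}^2}{\lambda_r^4}$, $r\sqrt{d_{\max}}(C_3 d_{\max}/\lambda_r^2)^k$.

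For (i) I would invoke Theorem~\ref{thm:distE_normal} with $s=\lambda_r/\sqrt{rd_{\max}}$ (its hypotheses $d_{\max}\ge 3r$ and $\lambda_r\ge C_1\sqrt{d_{\max}}$ are exactly those assumed here), which bounds $\sup_y|\PP((D-\mu)/\sigma\le y)-\Phi(y)|$ by precisely the first line of error terms in the target statement. Before doing so one disposes of the regime $\lambda_r<\sqrt{rd_{\max}}$: there $\frac{\sqrt r}{\|\Lambda^{-2}\|_{\rm F}\lambda_r^2}\ge1$ (since $\|\Lambda^{-2}\|_{\rm F}\le\sqrt r\,\lambda_r^{-2}$) and $\frac{(rd_{\max})^{1/2}}{\lambda_r}\ge1$, so the claimed right-hand side already exceeds $1$ once the constants are $\ge1$, and the bound is vacuous; hence we may assume $\lambda_r\ge\sqrt{rd_{\max}}$, which also makes the choice $s\ge1$ admissible.

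For (ii) I would bound the numerator by Lemma~\ref{lem:fourth_order_approx}, whose side conditions ($\sqrt{d_{\max}}\ge\log^2 d_{\max}$, $e^{-c_1d_{\max}}\le d_{\max}^{-1/2}$) hold once $d_{\max}$ exceeds an absolute constant --- and for the remaining finitely many $d_{\max}$ the asserted inequality is again trivial after enlarging constants. This yields
$$
|\mu-B_k|\ \le\ C_4\frac{r^2d_{\max}}{\lambda_r^4}+\frac{C_5r^2}{\sqrt{d_{\max}}}\Big(\frac{d_{\max}}{\lambda_r^2}\Big)^3+C_6 r\Big(\frac{C_3 d_{\max}}{\lambda_r^2}\Big)^{k+1}.
$$
For the denominator I use the elementary bounds $\|\Lambda^{-2}\|_{\rm F}\ge\lambda_r^{-2}$ (the largest diagonal entry of $\Lambda^{-2}$) and $d_{\star}=d_1+d_2-2r\ge d_{\max}-2r\ge d_{\max}/3$ (from $d_{\max}\ge3r$), so $\sigma\gtrsim\sqrt{d_{\max}}\,\lambda_r^{-2}$. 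Dividing the three displayed terms by this lower bound returns, respectively, $\frac{r^2\sqrt{d_{\max}}}{\lambda_r^2}$, $\frac{r^2d_{\max}^2}{\lambda_r^4}$ and $r\sqrt{d_{\max}}(C_3d_{\max}/\lambda_r^2)^k$ up to absolute constants --- exactly the three new error terms --- and combining with (i) and the Lipschitz estimate closes the argument.

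I do not expect a genuine obstacle: all the analytic substance lives in Theorem~\ref{thm:hat_Theta}, the CLT of Theorem~\ref{thm:distE_normal}, and the arbitrary-order bias expansion of Lemma~\ref{lem:fourth_order_approx}, which are already in hand, so the present theorem is a deterministic recentering plus the arithmetic of matching $|\mu-B_k|/\sigma$ against the listed terms. The only mildly delicate point is to make sure the crude lower bounds $\|\Lambda^{-2}\|_{\rm F}\ge\lambda_r^{-2}$ and $d_\star\ge d_{\max}/3$ are tight enough that no spurious factor of $r$ or of $\|\Lambda^{-1}\|_{\rm F}/\|\Lambda^{-2}\|_{\rm F}$ sneaks into the final bound; the computation above confirms they are.
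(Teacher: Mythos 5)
Your proposal is correct and follows essentially the same route as the paper: the paper also proves this theorem by quoting Theorem~\ref{thm:distE_normal} for the CLT centered at the unknown mean, invoking Lemma~\ref{lem:fourth_order_approx} to bound $|\EE\,{\rm dist}^2-B_k|$, and absorbing the resulting deterministic shift via the Lipschitz property of $\Phi$, exactly as you do (and your division by $\sqrt{8d_\star}\|\Lambda^{-2}\|_{\rm F}\gtrsim \sqrt{d_{\max}}\lambda_r^{-2}$ reproduces the three extra error terms correctly). Your additional care about the vacuous regime $\lambda_r<\sqrt{rd_{\max}}$ and the side conditions of Lemma~\ref{lem:fourth_order_approx} is a minor tidying of details the paper leaves implicit, not a different argument.
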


By Theorem~\ref{thm:second_order_CLT}, the asymptotical normality of $\big({\rm dist}^2[(\hat U,\hat V),(U,V)]-B_k\big)/\sqrt{8d_{\star}}\|\Lambda^{-2}\|_{\rm F}$ requires
$$
\frac{\sqrt{r d_{\max}}+rd_{\max}^{1/4}+\sqrt{d_{\max}}\cdot (r^2d_{\max})^{1/4k}}{\lambda_r}\to 0
$$
as $d_{1}, d_{2}\to \infty$ when $\sqrt{r}=O(\|\Lambda^{-2}\|_{\rm F}\lambda_r^2)$. By choosing $k=\ceil{\log d_{\max}}$, it boils down to $\sqrt{rd_{\max}}/\lambda_r\to 0$ which is optimal in the consistency regime. Similarly as in Theorem~\ref{thm:distE_normal}, the condition $(\|\Lambda^{-1}\|_{\rm F}^4/\|\Lambda^{-2}\|_{\rm F}^2)^{3/2}/\sqrt{d_{\max}}\to 0$ requires that $r^3/d_{\max}\to 0$ as $d_1,d_2\to \infty$. 

\begin{remark}\label{rem:Binfty_1}
To avoid computing the sum of $k$ terms in $B_k$ (\ref{eq:Bk_def}), it suffices to apply $B_{\infty}$ which by Remark~\ref{rmk:Binfty} is
$$
B_{\infty}=2\sum_{j=1}^r\frac{1}{\lambda_j^2}\Big(d_{1-}\cdot\frac{\lambda_j^2+d_{2-}}{\lambda_j^2+d_{1-}}+d_{2-}\cdot\frac{\lambda_j^2+d_{1-}}{\lambda_j^2+d_{2-}}\Big).
$$
By setting $k=\infty$ in Theorem~\ref{thm:second_order_CLT}, we obtain 
\begin{align*}
\frac{{\rm dist}^2[(\hat U,\hat V),(U,V)]-B_\infty}{\sqrt{8d_{\star}}\|\Lambda^{-2}\|_{\rm F}}\to \mathcal{N}(0,1)
\end{align*}
as long as $\sqrt{rd_{\max}}/\lambda_r\to0$ and $r^3/d_{\max}\to 0$ when $d_{1},d_{2}\to \infty$. 
\end{remark}

%Note that normal approximations of ${\rm dist}^2[(\hat U,\hat V),(U,V)]$ in Theorem~\ref{thm:first_order_CLT}-\ref{thm:second_order_CLT} depend on unknown parameters $\Lambda$ which needs to be estimated from data matrix $\hat M$.  In Section~\ref{sec:data_CR}, we propose efficient estimators of $\{\lambda_{j}\}_{j=1}^r$ based on recent developments on RMT. As a result, we obtain asymptotical normality of ${\rm dist}^2[(\hat U,\hat V),(U,V)]$ with  data-dependent  centering and normalizing terms, based on which, data-dependent confidence regions of $U$ and $V$ can be constructed. 

\section{Confidence regions of singular subspaces}\label{sec:CR}
By the normal approximation of ${\rm dist}^2[(\hat U, \hat V), (U,V)]$ in Theorem~\ref{thm:second_order_CLT}, we construct confidence regions of $U$ and $V$.
The confidence regions of $(U,V)$ attain the pre-determined confidence level asymptotically. In the asymptotic scheme, we shall consider $d_1, d_2\to\infty$. Therefore, the  parameters $r^{(d_1,d_2)}$, $U^{(d_1,d_2)},V^{(d_1,d_2)}$ and $\Lambda^{(d_1,d_2)}$ also depend on  $d_1,d_2$. For notational simplicity, we omit the superscripts $(d_1,d_2)$ without causing confusions.

In particular, we set $k=\ceil{\log d_{\max}}$ in Theorem~\ref{thm:second_order_CLT} and get
$$
\frac{{\rm dist}^2[(\hat U,\hat V), (U,V)]-B_{\ceil{\log d_{\max}}}}{\sqrt{8d_{\star}}\|\Lambda^{-2}\|_{\rm F}} \stackrel{{\rm d}}{\longrightarrow} \calN(0,1)
$$
as $d_1,d_2\to+\infty$ when $\sqrt{r}=O(\lambda_r^2\|\Lambda^{-2}\|_{\rm F})$ and 
\begin{equation}\label{eq:asymp_cond}
\lim_{d_1,d_2\to\infty}\max\bigg\{\frac{\sqrt{r d_{\max}}+rd_{\max}^{1/4}}{\lambda_r}+\bigg(\frac{\|\Lambda^{-1}\|_{\rm F}^4}{\|\Lambda^{-2}\|_{\rm F}^2}\bigg)^{3/2}\cdot \frac{1}{\sqrt{d_{\max}}} \bigg\}=0.
\end{equation}
We define the confidence region based on $(\hat U,\hat V)$ by 
\begin{align*}
\calM_{\alpha}(\hat U, \hat V):=&\bigg\{(L,R): L\in\RR^{d_1\times r}, R\in\RR^{d_2\times r}, L^{\tran}L=R^{\tran}R=I_r\\
&,\big|{\rm dist}^2[(L,R),(\hat U,\hat V)]-  {B}_{\ceil{\log d_{\max}}}\big|\leq \sqrt{8d_{\star}}z_{\alpha/2}\|\Lambda^{-2}\|_{\rm F}\bigg\}
\end{align*}
where  $z_{\alpha}$ denotes the critical value of standard normal distribution, i.e., $z_{\alpha}=\Phi^{-1}(1-\alpha)$. Theorem~\ref{thm:data_CR} follows immediately from Theorem~\ref{thm:second_order_CLT}.
\begin{theorem}\label{thm:data_CR}
Suppose that conditions in Theorem~\ref{thm:second_order_CLT} hold. Then, for any $\alpha\in(0,1)$, we get 
\begin{align*}
\Big|\PP\big((U,V)&\in\calM_{\alpha}(\hat U,\hat V)\big) -(1-\alpha)\Big|\\
\leq&C_1\frac{\sqrt{r}}{\lambda_r^2\|\Lambda^{-2}\|_{\rm F}}\cdot \sqrt{\frac{(rd_{\max})^{1/2}}{\lambda_r}}+2e^{-c_1d_{\max}}+e^{-\lambda_r/\sqrt{rd_{\max}}}\\
&+C_2\bigg(\frac{\|\Lambda^{-1}\|_{\rm F}^4}{\|\Lambda^{-2}\|_{\rm F}^2}\bigg)^{3/2}\cdot \frac{1}{\sqrt{d_{\max}}}+C_3\frac{r^2\sqrt{d_{\max}}}{\lambda_r^2}+C_4\frac{r^2d_{\max}^2}{\lambda_r^4}
\end{align*}
for some absolute constants $C_1,C_2,C_3,C_4,c_1>0$. If condition (\ref{eq:asymp_cond}) holds, then 
$$
\lim_{d_1,d_2\to\infty}\PP\Big((U,V)\in\calM_\alpha(\hat U,\hat V)\Big)=1-\alpha. 
$$
\end{theorem}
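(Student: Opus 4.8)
The plan is to obtain Theorem~\ref{thm:data_CR} as an immediate corollary of Theorem~\ref{thm:second_order_CLT} with $k=\ceil{\log d_{\max}}$. \emph{Step 1 (reduce the coverage event to the studentized statistic).} The pair $(U,V)$ always satisfies the orthonormality constraints $U^\tran U=V^\tran V=I_r$ appearing in the definition of $\calM_\alpha(\hat U,\hat V)$, and the projection distance is symmetric in its two arguments, ${\rm dist}^2[(U,V),(\hat U,\hat V)]={\rm dist}^2[(\hat U,\hat V),(U,V)]$ (both equal $\|\hat U\hat U^\tran-UU^\tran\|_{\rm F}^2+\|\hat V\hat V^\tran-VV^\tran\|_{\rm F}^2$). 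Hence the event $\{(U,V)\in\calM_\alpha(\hat U,\hat V)\}$ is exactly $\{-z_{\alpha/2}\le T\le z_{\alpha/2}\}$, where $T:=({\rm dist}^2[(\hat U,\hat V),(U,V)]-B_{\ceil{\log d_{\max}}})/(\sqrt{8d_{\star}}\,\|\Lambda^{-2}\|_{\rm F})$ is precisely the quantity controlled by Theorem~\ref{thm:second_order_CLT}.

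\emph{Step 2 (apply the CLT).} Let $\delta$ denote the right-hand side of the bound in Theorem~\ref{thm:second_order_CLT} evaluated at $k=\ceil{\log d_{\max}}$ (legitimate once $d_{\max}$ is large enough that $\ceil{\log d_{\max}}\ge 3$; for bounded $d_{\max}$ the asserted bound is trivial after enlarging constants). Then $\sup_x|\PP(T\le x)-\Phi(x)|\le\delta$, and since $\Phi$ is continuous this also gives $|\PP(T<x)-\Phi(x)|\le\delta$ for every $x$. Writing $\PP(-z_{\alpha/2}\le T\le z_{\alpha/2})=\PP(T\le z_{\alpha/2})-\PP(T<-z_{\alpha/2})$ and bounding each of the two terms within $\delta$ of $\Phi(z_{\alpha/2})$ and $\Phi(-z_{\alpha/2})$ respectively yields $|\PP(-z_{\alpha/2}\le T\le z_{\alpha/2})-(\Phi(z_{\alpha/2})-\Phi(-z_{\alpha/2}))|\le 2\delta$. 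Since $\Phi(z_{\alpha/2})-\Phi(-z_{\alpha/2})=(1-\alpha/2)-\alpha/2=1-\alpha$, we obtain $|\PP((U,V)\in\calM_\alpha(\hat U,\hat V))-(1-\alpha)|\le 2\delta$.

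\emph{Step 3 (simplify $2\delta$ and conclude).} It remains to verify that $2\delta$ is dominated by the displayed bound of Theorem~\ref{thm:data_CR}. Every term of $\delta$ except the last one coincides, up to an absolute constant, with a term of that bound (the factor $2$ being exactly why the exponential appears there as $2e^{-c_1 d_{\max}}$); and the last term $C_2 r\sqrt{d_{\max}}\,(C_3 d_{\max}/\lambda_r^2)^{\ceil{\log d_{\max}}}$ decays faster than any fixed power of $d_{\max}^{-1}$ by the choice of $k$ (cf. Remark~\ref{rmk:Binfty}), hence is absorbed, e.g., into $C_4 r^2 d_{\max}^2/\lambda_r^4$. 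The limiting assertion $\lim_{d_1,d_2\to\infty}\PP((U,V)\in\calM_\alpha(\hat U,\hat V))=1-\alpha$ then follows because condition (\ref{eq:asymp_cond}) forces every term of $\delta$, and hence $2\delta$, to $0$. \emph{Main point.} There is essentially no obstacle: all the content sits in Theorem~\ref{thm:second_order_CLT}. The only care needed is (i) invoking the symmetry of the projection distance so that the CLT, proved for ${\rm dist}^2[(\hat U,\hat V),(U,V)]$, transfers to $\calM_\alpha$, which is written in terms of ${\rm dist}^2[(L,R),(\hat U,\hat V)]$, and (ii) passing from the one-sided Kolmogorov bound to the two-sided coverage probability at the cost of the harmless factor $2$ (using continuity of $\Phi$ to handle the strict inequality at the lower endpoint).
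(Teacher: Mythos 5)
Your proposal is correct and takes essentially the same route as the paper, which obtains Theorem~\ref{thm:data_CR} directly from Theorem~\ref{thm:second_order_CLT} with $k=\ceil{\log d_{\max}}$ via exactly your reduction: symmetry of the projection distance, the two-sided coverage identity, and the Kolmogorov bound applied at $\pm z_{\alpha/2}$ at the cost of a factor $2$. The only imprecision is the claim that $r\sqrt{d_{\max}}\,(C_3 d_{\max}/\lambda_r^2)^{\ceil{\log d_{\max}}}$ decays faster than any fixed power of $d_{\max}^{-1}$ — at the boundary $\lambda_r\asymp\sqrt{d_{\max}}$ it decays only like a fixed power determined by the constants — but your absorption of this term into $C_4 r^2 d_{\max}^2/\lambda_r^4$ is still valid once $C_1$ is taken large enough relative to $C_3$ (the same implicit arrangement as in Remark~\ref{rmk:Binfty}), so there is no genuine gap.
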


\begin{remark}
We can also simply replace $B_{\ceil{\log d_{\max}}}$ with $B_{\infty}$ and Theorem~\ref{thm:data_CR} still holds under the same conditions. 
\end{remark}

%\subsection{Discussions on data-dependent confidence regions}
\begin{remark}
Note that  $\Lambda$ is usually unknown. 
An immediate  choice is the empirical singular values $\hat\Lambda={\rm diag}(\hat\lambda_1,\cdots,\hat\lambda_r)$, i.e., top-$r$ singular values of $\hat M$.
It is well known that $\{\hat\lambda_j\}_{j=1}^r$ are biased estimators of $\{\lambda_j\}_{j=1}^r$. See \citep{benaych2012singular} and \citep{ding2017high} for more details. 

By \cite[{Theorem~2.2}]{ding2017high}, if $\lambda_r=O(\sqrt{d_{\max}})$ and some eigen-gap conditions hold, then with probability at least $1-d_{\max}^{-2}$, for all $1\leq j\leq r$,
\begin{equation}\label{eq:hat_lambdaj}
\Big|\hat \lambda_j^2-\Big(\lambda_j^2+(d_1+d_2)+\frac{d_1d_2}{\lambda_j^2}\Big)\Big|\leq C_1d_{\max}^{1/4}\lambda_j^{1/2}
\end{equation}
where $C_1>0$ is some absolute constant. { %Our results are established in the regime $\lambda_r\gg \sqrt{d_{\max}}$ where \cite{ding2017high} is not immediately applicable in the asymptotical settings. 
In the non-asymptotical settings, (\ref{eq:hat_lambdaj}) suggests that $|\hat\lambda_j^2-\lambda_j^2|\geq c_0 (d_1+d_2)$. Then,
\begin{align*}
\frac{d_{\star}\big|\|\Lambda^{-1}\|_{\rm F}^2-\|\hat\Lambda^{-1}\|_{\rm F}^2\big|}{\sqrt{8d_{\star}}\|\Lambda^{-2}\|_{\rm F}} \geq c_1 \frac{d_{\max}^{3/2}}{\lambda_r^2}
\end{align*}
for some absolute constants $c_0, c_1>0$. 
If we directly use $\|\hat\Lambda^{-1}\|_{\rm F}^2$  in Theorem~\ref{thm:first_order_CLT}, the non-asymptotical convergence rate reads $d_{\max}^{3/2}/\lambda_r^2$. It is indeed observed in simulations. See Section~\ref{sec:sim_data} for more details. 
}

%It is significantly stronger than the SNR conditions in Theorem~\ref{thm:second_order_CLT} when higher order bias corrections are utilized. 

{ Bound (\ref{eq:hat_lambdaj}) inspires  the following shrinkage estimator of $\lambda_j^2$:
\begin{equation}\label{eq:tilde_lambdaj_def}
\tilde\lambda_j^2=\frac{\hat\lambda_j^2-(d_1+d_2)}{2}+\frac{\sqrt{(\hat\lambda_j^2-(d_1+d_2))^2-4d_1d_2}}{2}\quad \textrm{for all } 1\leq j\leq r.
\end{equation}
By replacing $\Lambda$ with data-dependent estimates $\tilde\Lambda={\rm diag}(\tilde\lambda_1,\cdots,\tilde\lambda_r)$, it works extremely well in simulations. See Section~\ref{sec:sim_data} for more details. 

However, in order to theoretically justify these data-dependent estimates, we shall prove bound (\ref{eq:hat_lambdaj}) in the regime $\lambda_r\gg \sqrt{d_{\max}}$ and for divergent $r$. It is beyond the scope of this paper and we leave it as a future work.  Note that we can still apply (\ref{eq:tilde_lambdaj_def}) in practice since real-world applications are always in the non-asymptotic settings. 
}
\end{remark}

\section{Numerical experiments}\label{sec:sim}
For all the simulation cases considered below, we choose the rank $r=6$ and the singular values are set as $\lambda_i=2^{r-i}\cdot \lambda$ for $i=1,\cdots,r$ for some positive number $\lambda$. As a result, the signal strength is determined by $\lambda$. The true singular vectors $U\in\RR^{d_1\times r}$ and $V\in\RR^{d_2\times r}$ are computed from the left and right singular subspaces of a $d_1\times d_2$ Gaussian random matrix. 

\subsection{Higher order approximations of bias and normal approximation}
In {\it Simulation $1$}, we show the effectiveness of approximating $\EE{\rm dist}^2[(\hat U,\hat V), (U,V)]$ by the first order approximation $2d_{\star}\|\Lambda^{-1}\|_{\rm F}^2$ where $d_{\star}=d_1+d_2-2r$. Meanwhile, we show the inefficiency of first order approximation when $|d_1-d_2|\gtrsim \min(d_1,d_2)$. In {\it Simulation $2$}, we demonstrate the benefits of higher order approximations  when $|d_1-d_2|\gtrsim \min(d_1,d_2)$. 

{\it Simulation $1$.}  In this simulation, we study the accuracy of first order approximation and its relevance with $\Delta_d=d_1-d_2$. First, we set $d_1=d_2=d$ where $d=100, 200, 300$. The signal strength $\lambda$ is chosen as $30, 30.5,\cdots, 40$. For each given $\lambda$, the first order approximation $2d_{\star}\|\Lambda^{-1}\|_{\rm F}^2$ is recorded. To obtain $\EE{\rm dist}^2[(\hat U,\hat V), (U,V)]$, we repeat the experiments for $500$ times for each $\lambda$ and the average of ${\rm dist}^2[(\hat U,\hat V), (U,V)]$  is recorded, which denotes the simulated value of $\EE{\rm dist}^2[(\hat U,\hat V), (U,V)]$. We compare the simulated $\EE{\rm dist}^2[(\hat U,\hat V), (U,V)]$ with  $2d_{\star}\|\Lambda^{-1}\|_{\rm F}^2$, which is displayed in Figure~\ref{fig:E_1}. Since $d_1=d_2=d$, the first order approximation has similar effect as higher order approximation which is verified  by Figure~\ref{fig:E_1}.  Second, we set $d_1=\frac{d_2}{2}=d$ for $d=100,200,300$. As a result, $\Delta_d=d_2-d_1=d$ which is significantly large. Similar experiments are conducted and the results are displayed in Figure~\ref{fig:E_2}, which clearly shows that first order approximation is insufficient to estimate $\EE{\rm dist}^2[(\hat U,\hat V), (U,V)]$. Therefore, if $|d_1-d_2|\gg 0$, we need higher order approximation of $\EE{\rm dist}^2[(\hat U,\hat V), (U,V)]$. 
\begin{figure}
\centering     %%% not \center
\subfigure[First order approximation $2d_{\star}\|\Lambda^{-1}\|_{\rm F}^2$ is accurate when $\Delta_d=d_1-d_2=0$ and rank $r=6$. Here $d_{\star}=d_1+d_2-2r$. 
There is no need for higher order approximations.]{\label{fig:E_1}\includegraphics[height=100mm,width=60mm]{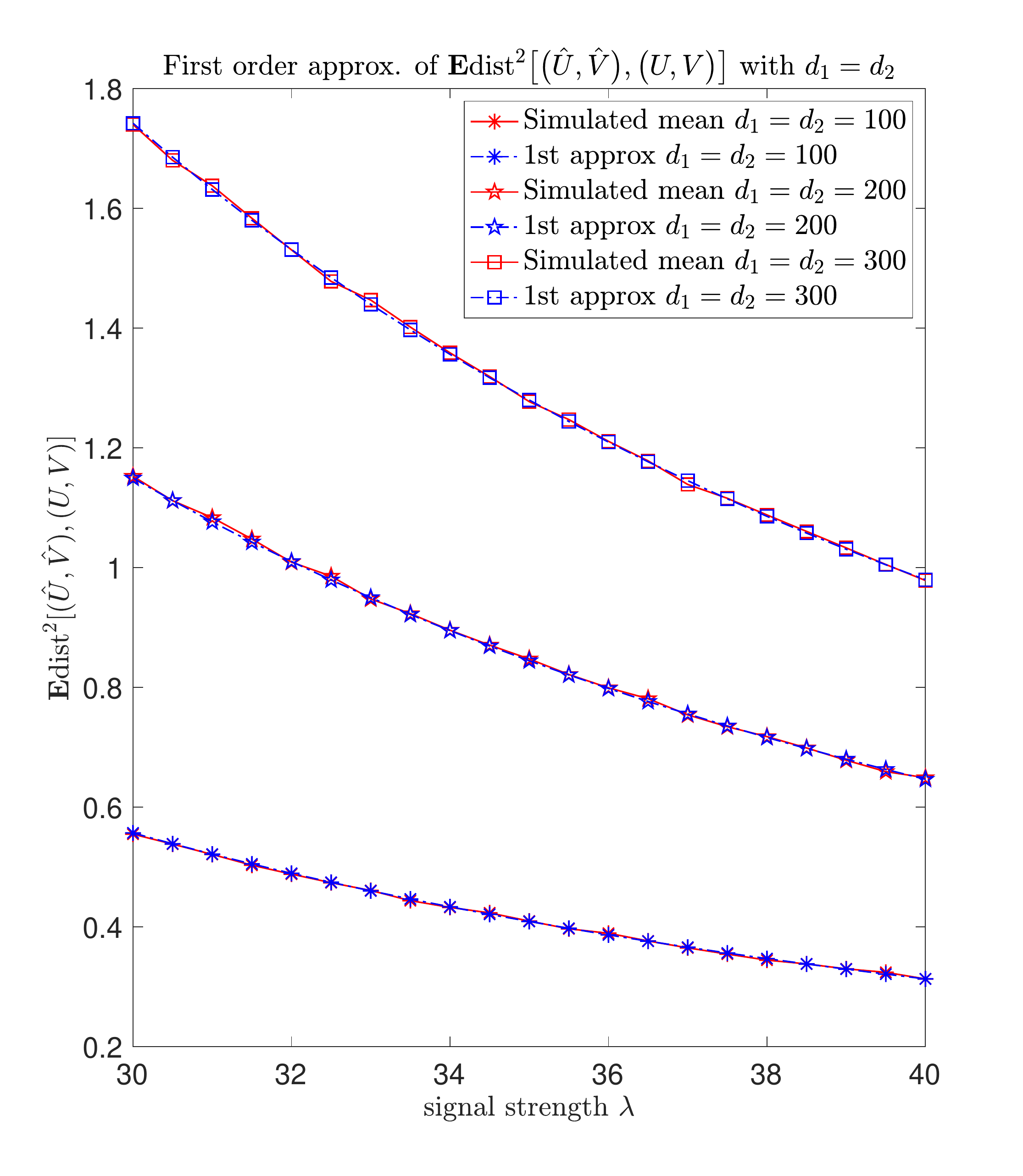}}
\subfigure[First order approximation $2d_{\star}\|\Lambda^{-1}\|_{\rm F}^2$ is not sufficiently accurate when $|d_1-d_2|\gg 0$. Here $d_{\star}=d_1+d_2-2r$ and rank $r=6$. 
The higher order approximations are indeed necessary.]{\label{fig:E_2}\includegraphics[height=100mm,width=60mm]{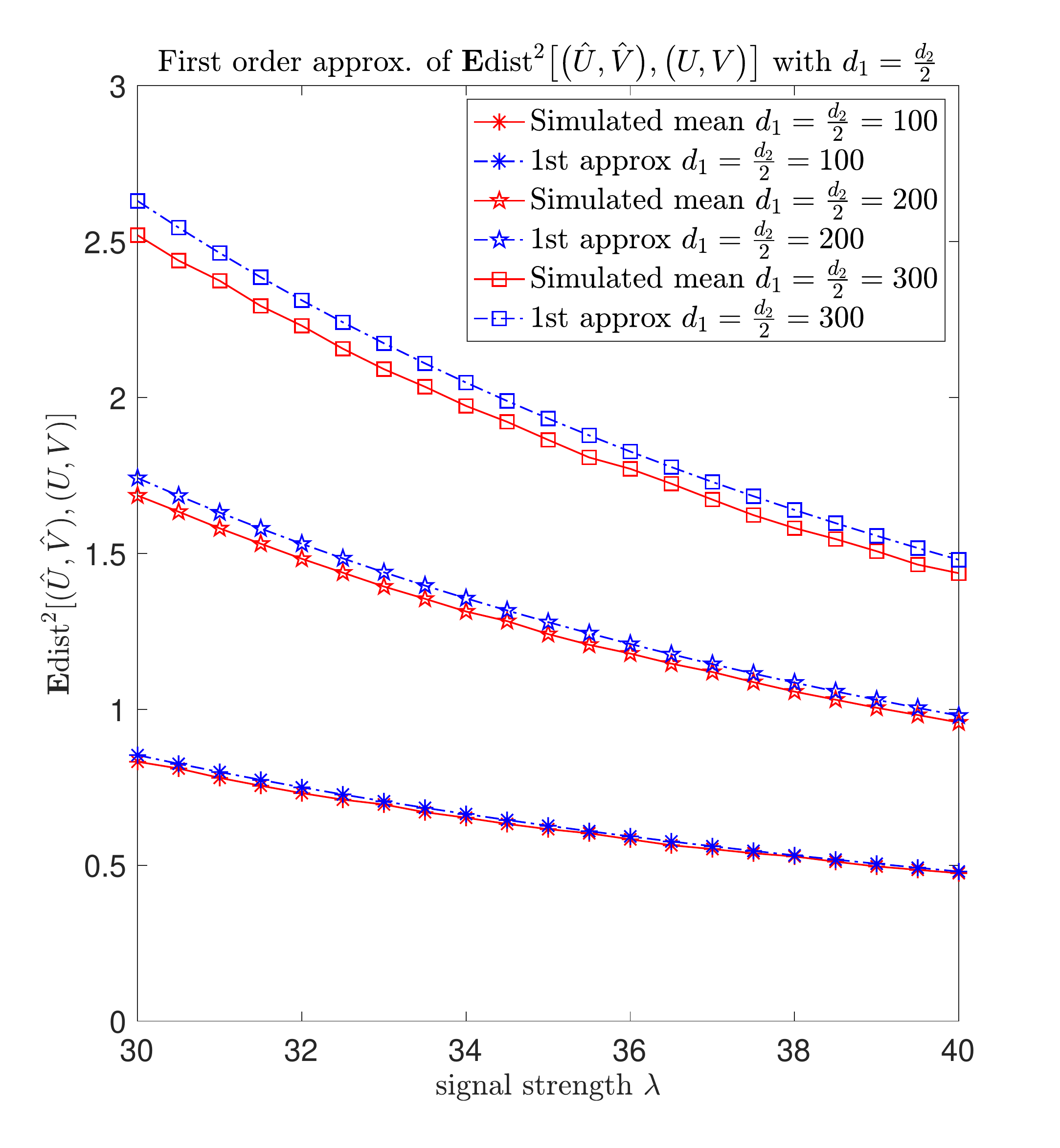}}
\caption{Comparison between $\EE{\rm dist}^2[(\hat U,\hat V), (U,V)]$ and the first order approximation: $2d_{\star}\|\Lambda^{-1}\|_{\rm F}^2$. It verifies that the accuracy of first order approximation depends on the dimension difference $\Delta_d=d_1-d_2$. Here the red curves represent the simulated mean $\EE{\rm dist}^2[(\hat U,\hat V), (U,V)]$ based on $500$ realizations of ${\rm dist}^2[(\hat U,\hat V), (U,V)]$. The blue curves are the theoretical first order approximations $2d_{\star}\|\Lambda^{-1}\|_{\rm F}^2$ based on Lemma~\ref{lem:first_order_approx}. The above left figure clearly shows that first order approximation is accurate if $d_1=d_2$. 
}
\end{figure}

{\it Simulation $2$.} In this simulation, we study the effects of higher order approximations when $|d_1-d_2|\gg 0$. More specifically, we choose $d_1=500$ and $d_2=1000$. The signal strength $\lambda=50,51,\cdots,60$. For each $\lambda$, we repeat the experiments for $500$ times producing $500$ realizations of ${\rm dist}^2[(\hat U,\hat V), (U,V)]$ whose average is recorded as the simulated $\EE{\rm dist}^2[(\hat U,\hat V), (U,V)]$. Meanwhile, for each $\lambda$, we record the $1$st-$4$th order approximations $B_1, B_2, B_3$ and $B_4$ which are defined by (\ref{eq:Bk_def}) . All the results are displayed in Figure~\ref{fig:E_3}. It verifies that higher order bias corrections indeed improve the accuracy of approximating $\EE{\rm dist}^2[(\hat U,\hat V), (U,V)]$. It also shows that the $1$st and $3$rd order approximations over-estimate $\EE{\rm dist}^2[(\hat U,\hat V), (U,V)]$; while, the $2$nd and $4$th order approximations under-estimate $\EE{\rm dist}^2[(\hat U,\hat V), (U,V)]$.
\begin{figure}
\centering
\includegraphics[height=140mm,width=120mm]{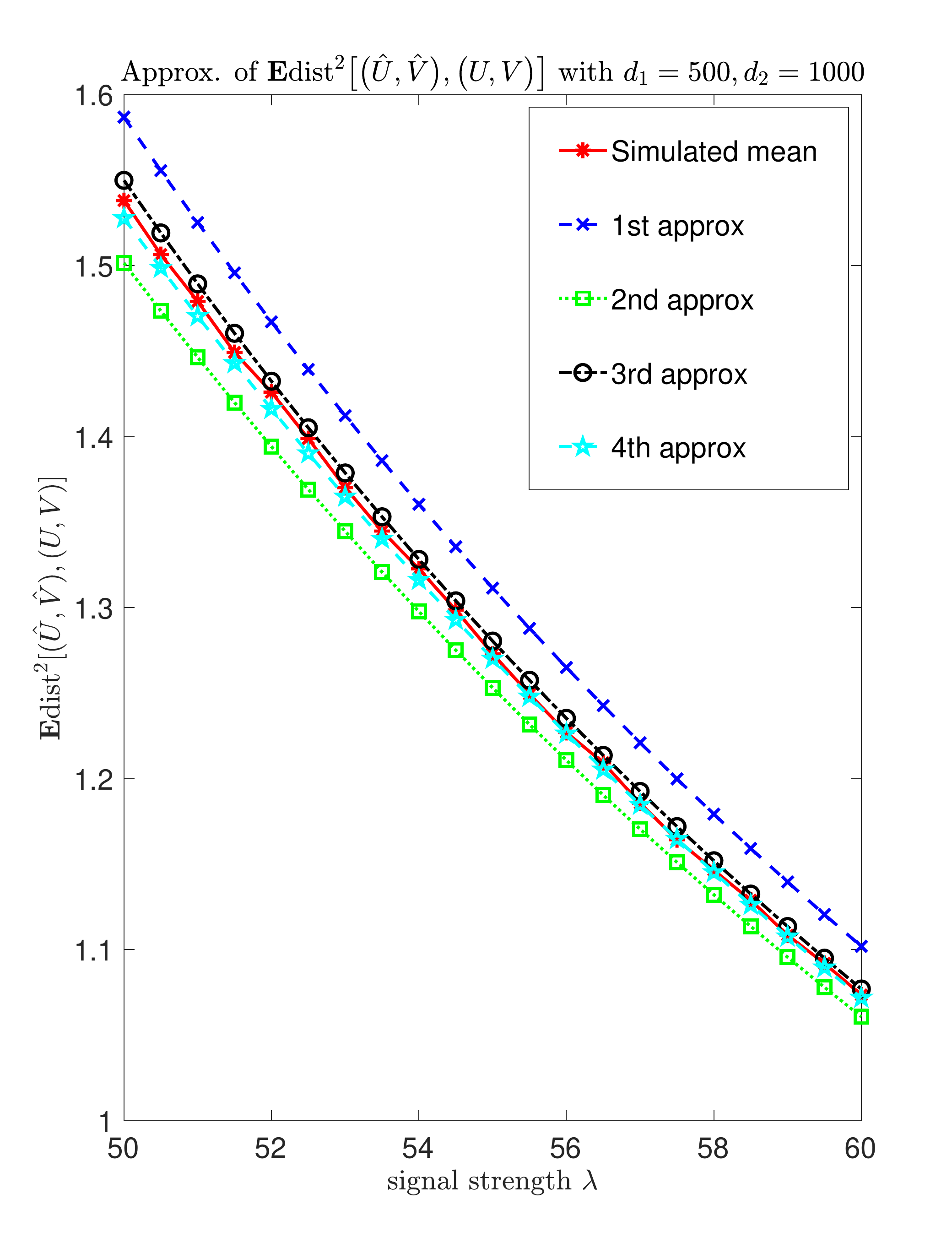}
\caption{The higher order approximations of $\EE{\rm dist}^2[(\hat U,\hat V), (U,V)]$. The simulated mean represents $\EE{\rm dist}^2[(\hat U,\hat V), (U,V)]$ calculated by the average of $500$ realizations of ${\rm dist}^2[(\hat U,\hat V), (U,V)]$. The $1$st order approximation is $2d_{\star}\|\Lambda^{-1}\|_{\rm F}^2$;  $2$nd order approximation is $2(d_{\star}\|\Lambda^{-1}\|_{\rm F}^2-\Delta_d^2\|\Lambda^{-2}\|_{\rm F}^2)$,  $3$rd order approximation is $2(d_{\star}\|\Lambda^{-1}\|_{\rm F}^2-\Delta_d^2\|\Lambda^{-2}\|_{\rm F}^2+d_{\star}\Delta_d^2\|\Lambda^{-3}\|_{\rm F}^2)$ and $4$th order approximation is $2(d_{\star}\|\Lambda^{-1}\|_{\rm F}^2-\Delta_d^2\|\Lambda^{-2}\|_{\rm F}^2+d_{\star}\Delta_d^2\|\Lambda^{-3}\|_{\rm F}^2-(d_{\star}^2-d_{1-}d_{2-})\Delta_d^2\|\Lambda^{-4}\|_{\rm F}^2)$ where $\Delta_d=d_1-d_2$, $d_{1-}=d_1-r$, $d_{2-}=d_2-r$ and $d_{\star}=d_{1-}+d_{2-}$ with $r=6$. Clearly, the $3$rd and $4$th order approximations are already close to the simulated mean.  We observe that the $1$st and $3$rd order approximations over-estimate $\EE{\rm dist}^2[(\hat U,\hat V), (U,V)]$; while, the $2$nd and $4$th order approximations under-estimate $\EE{\rm dist}^2[(\hat U,\hat V), (U,V)]$.}
\label{fig:E_3}
\end{figure}

{\it Simulation $3$.} We apply higher order approximations and show the normal approximation of $\big({\rm dist}^2[(\hat U,\hat V), (U,V)]- B_k\big)/\sqrt{8d_{\star}}\|\Lambda^{-2}\|_{\rm F}$ when $d_1=100, d_2=600$ and rank $r=6$.  We fixed the signal strength $\lambda=50$. The density histogram is based on $5000$ realizations from independent experiments.  We consider $1$st-$4$th order approximations, denoted by $\{B_k\}_{k=1}^4$. More specifically, 
$$
 B_1=2d_{\star}\|\Lambda^{-1}\|_{\rm F}^2,\quad{\rm and}\quad  B_2=2(d_{\star}\|\Lambda^{-1}\|_{\rm F}^2-\Delta_d^2\|\Lambda^{-2}\|_{\rm F}^2)
$$
and 
$$
 B_3=2(d_{\star}\|\Lambda^{-1}\|_{\rm F}^2-\Delta_d^2\|\Lambda^{-2}\|_{\rm F}^2+d_{\star}\Delta_d^2\|\Lambda^{-3}\|_{\rm F}^2)
$$
and 
$$
 B_4=2\big(d_{\star}\|\Lambda^{-1}\|_{\rm F}^2-\Delta_d^2\|\Lambda^{-2}\|_{\rm F}^2+d_{\star}\Delta_d^2\|\Lambda^{-3}\|_{\rm F}^2-(d_{1-}^3-d_{2-}^3)\Delta_d\|\Lambda^{-4}\|_{\rm F}^2\big).
$$
The results are shown in Figure~\ref{fig:normal_3}. This experiment aims to demonstrate the necessity of higher order bias corrections. Indeed, by the density histograms in Figure~\ref{fig:normal_3}, the first and second order bias corrections are not sufficiently strong to guarantee the normal approximations, at least when $\lambda\leq 50$, where the density histograms either shift leftward or rightward compared with the standard normal curve. On the other hand, after third or fourth order corrections, the normal approximation is very satisfactory at the same level of signal strength $\lambda=50$. 
\begin{figure}
\centering     %%% not \center
\subfigure[$ B_1=2d_{\star}\|\Lambda^{-1}\|_{\rm F}^2$]{\includegraphics[height=50mm,width=60mm]{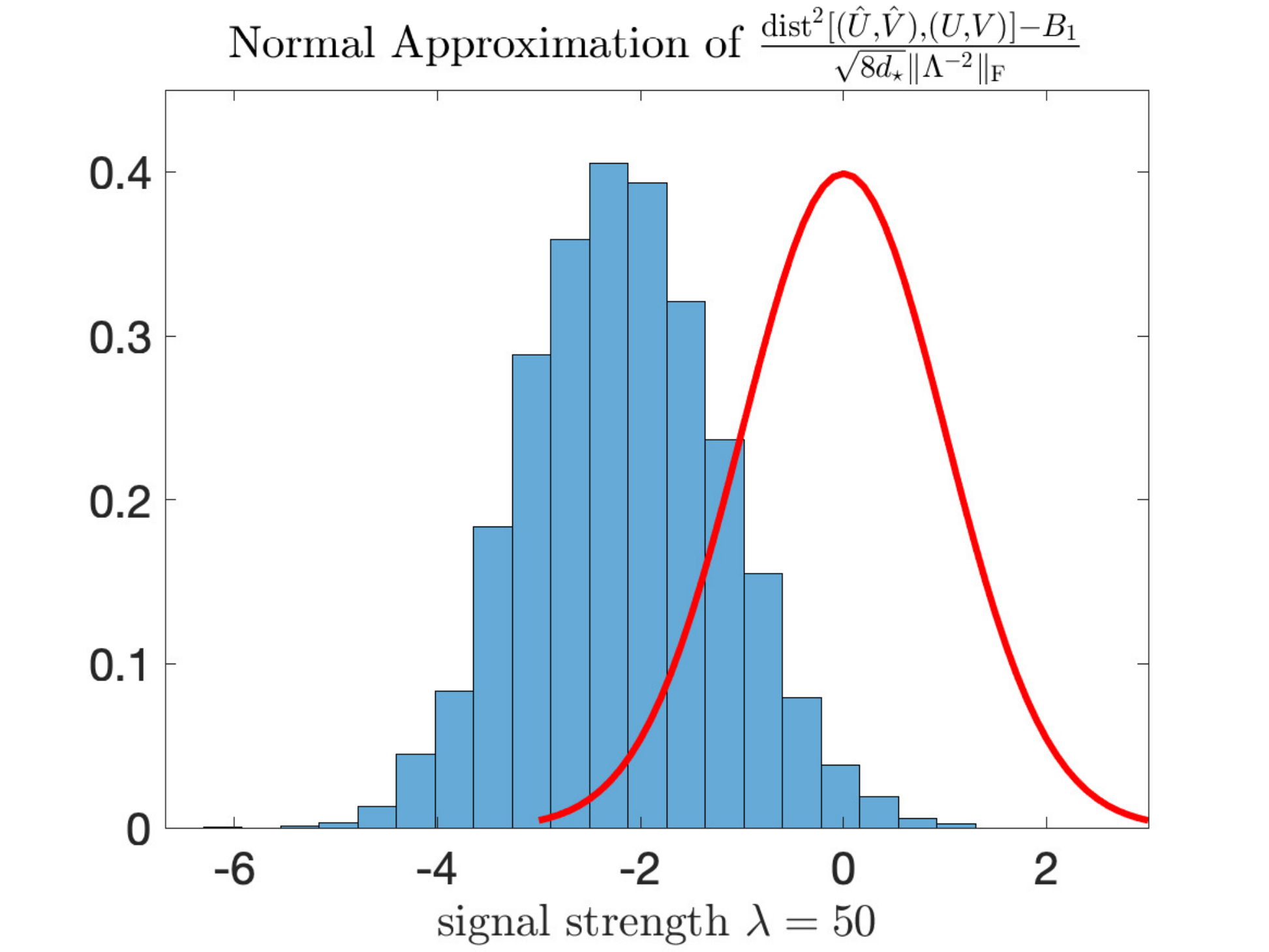}}
\subfigure[$ B_2=2(d_{\star}\|\Lambda^{-1}\|_{\rm F}^2-\Delta_d^2\|\Lambda^{-2}\|_{\rm F}^2)$]{\includegraphics[height=50mm,width=60mm]{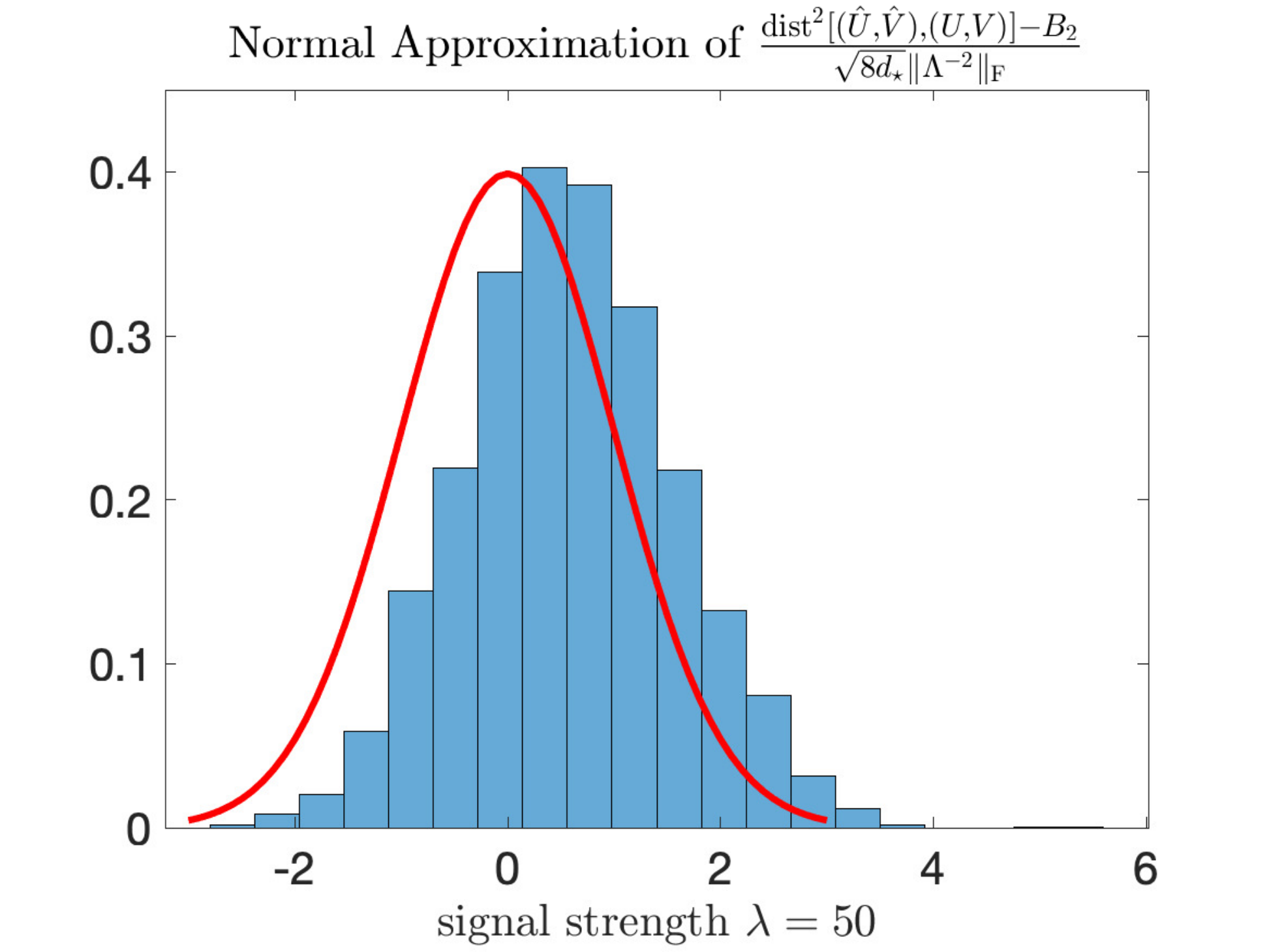}}\\
\subfigure[$ B_3=2(d_{\star}\|\Lambda^{-1}\|_{\rm F}^2-\Delta_d^2\|\Lambda^{-2}\|_{\rm F}^2+d_{\star}\Delta_d^2\|\Lambda^{-3}\|_{\rm F}^2$]{\includegraphics[height=50mm,width=60mm]{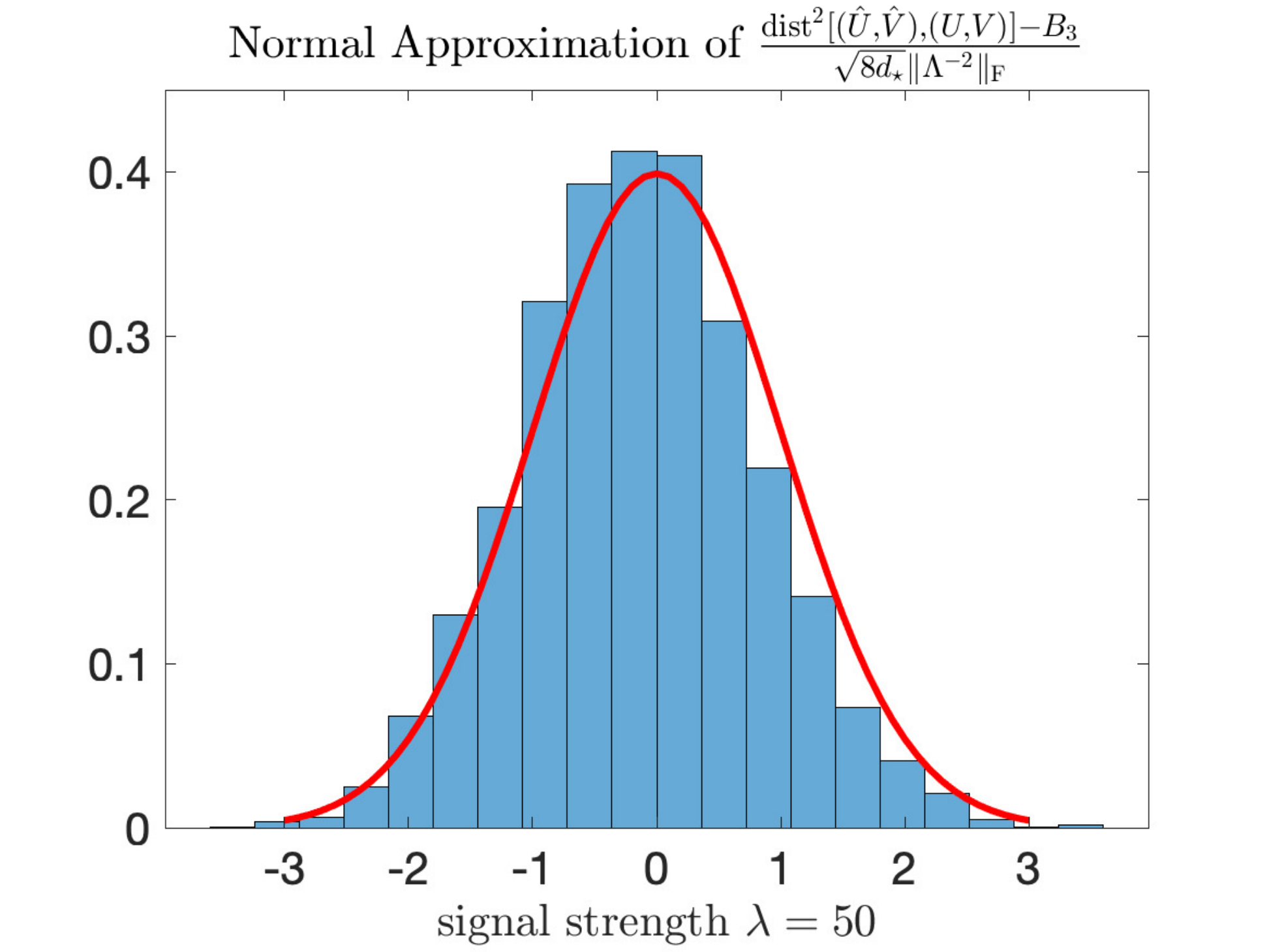}}
\subfigure[$ B_4=2(d_{\star}\|\Lambda^{-1}\|_{\rm F}^2-\Delta_d^2\|\Lambda^{-2}\|_{\rm F}^2+d_{\star}\Delta_d^2\|\Lambda^{-3}\|_{\rm F}^2-(d_1^3-d_2^3)(d_1-d_2)\|\Lambda^{-4}\|_{\rm F}^2)$]{\includegraphics[height=50mm,width=60mm]{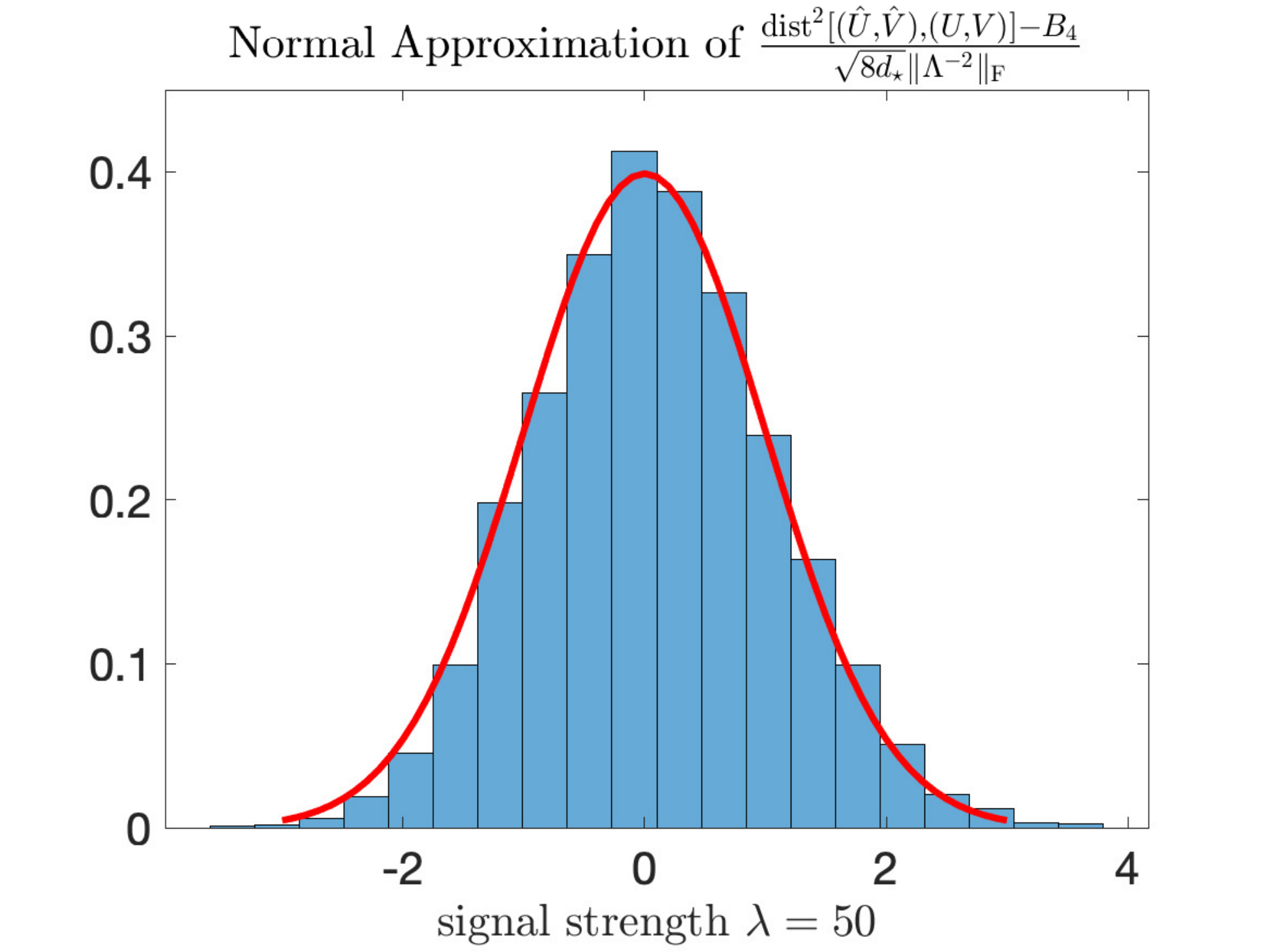}}
\caption{Normal approximation of $\frac{{\rm dist}^2[(\hat U,\hat V), (U,V)]- B_k}{\sqrt{8d_{\star}}\|\Lambda^{-2}\|_{\rm F}}$ with higher order bias corrections when $d_1=100, d_2=600$ and $r=6$. The density histogram is based on $5000$ realizations from independent experiments.  The red curve presents  p.d.f. of  standard normal distributions. Since $|d_1-d_2|\gg0$, this experiment demonstrates the necessity of higher order bias corrections. The bias correction $\hat B_k$ can be $1$st -$4$th order bias corrections.
}
\label{fig:normal_3}
\end{figure}

\subsection{Normal approximation with data-dependent bias corrections}\label{sec:sim_data}
Next, we show normal approximations of ${\rm dist}^2[(\hat U,\hat V), (U,V)]$ with data-dependent bias corrections and normalization factors. 

{\it Simulation $4$.} We apply the $1$st order approximation and show normal approximation of $\big({\rm dist}^2[(\hat U,\hat V), (U,V)]-2d_{\star}\|\hat\Lambda^{-1}\|_{\rm F}^2\big)/\sqrt{8d_{\star}}\|\hat\Lambda^{-2}\|_{\rm F}$ when $d_1=d_2=100$ and $r=6$.  Here, $\hat \Lambda={\rm diag}(\hat\lambda_1,\cdots,\hat\lambda_r)$ denotes the top-$r$ empirical singular values of $\hat M$. The signal strength $\lambda=25,50,65,75$. For each $\lambda$, we record $\big({\rm dist}^2[(\hat U,\hat V), (U,V)]-2d_{\star}\|\hat\Lambda^{-1}\|_{\rm F}^2\big)/\sqrt{8d_{\star}}\|\hat\Lambda^{-2}\|_{\rm F}$ from $5000$ thousand independent experiments and draw the density histogram. The p.d.f. of standard normal distribution is displayed by the red curve. The results are shown in Figure~\ref{fig:normal_1}. Since  each $\hat\lambda_j$ over-estimates the true $\lambda_j$, the bias correction $2d_{\star}\|\hat\Lambda^{-1}\|_{\rm F}^2$ is not sufficiently significant. It explains why the density histograms shift rightward compared with the standard normal curve, especially when signal strength $\lambda$ is moderately strong. 

\begin{figure}
\centering     %%% not \center
\subfigure[$\lambda=25$]{\includegraphics[height=50mm,width=60mm]{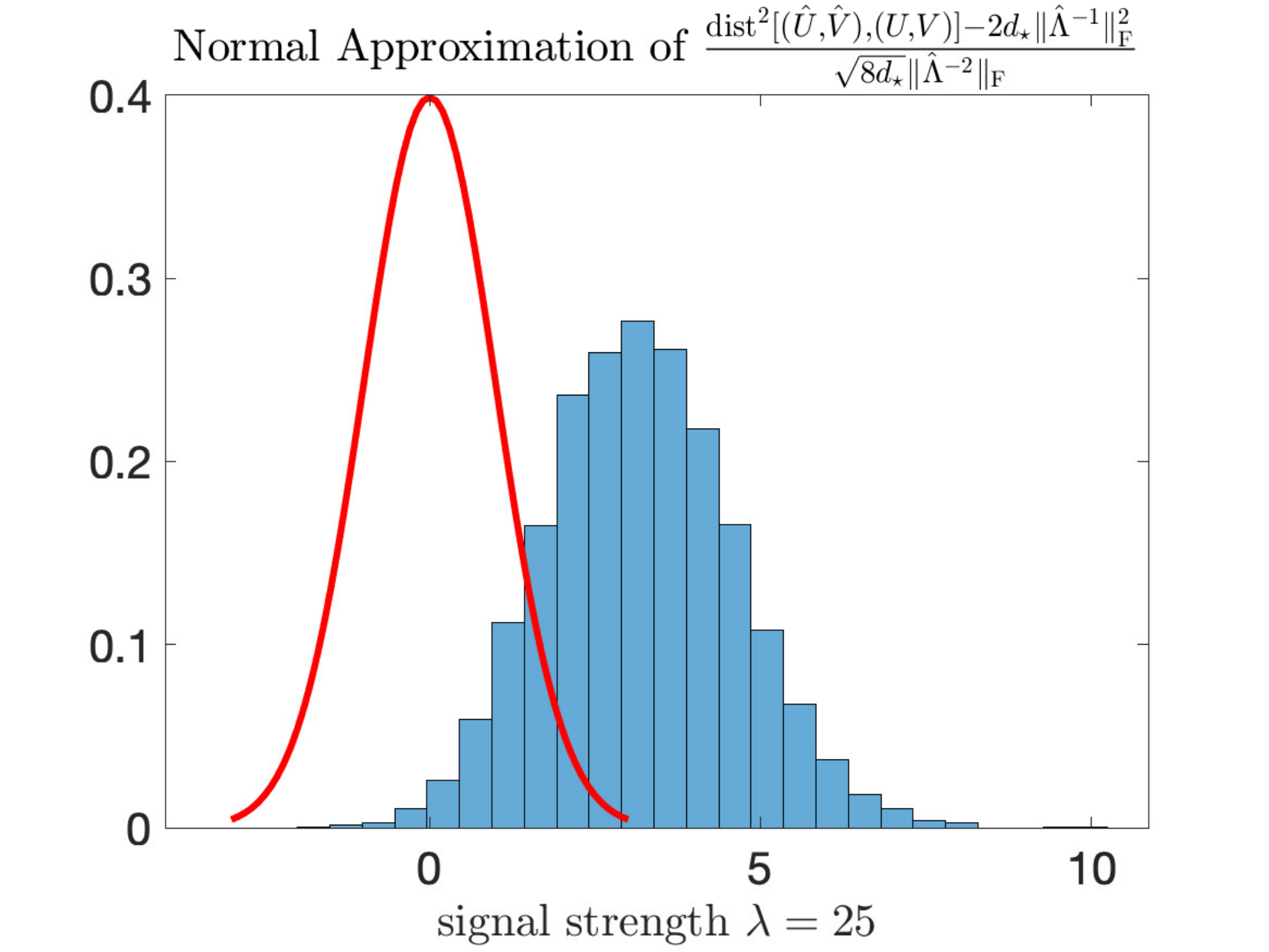}}
\subfigure[$\lambda=50$]{\includegraphics[height=50mm,width=60mm]{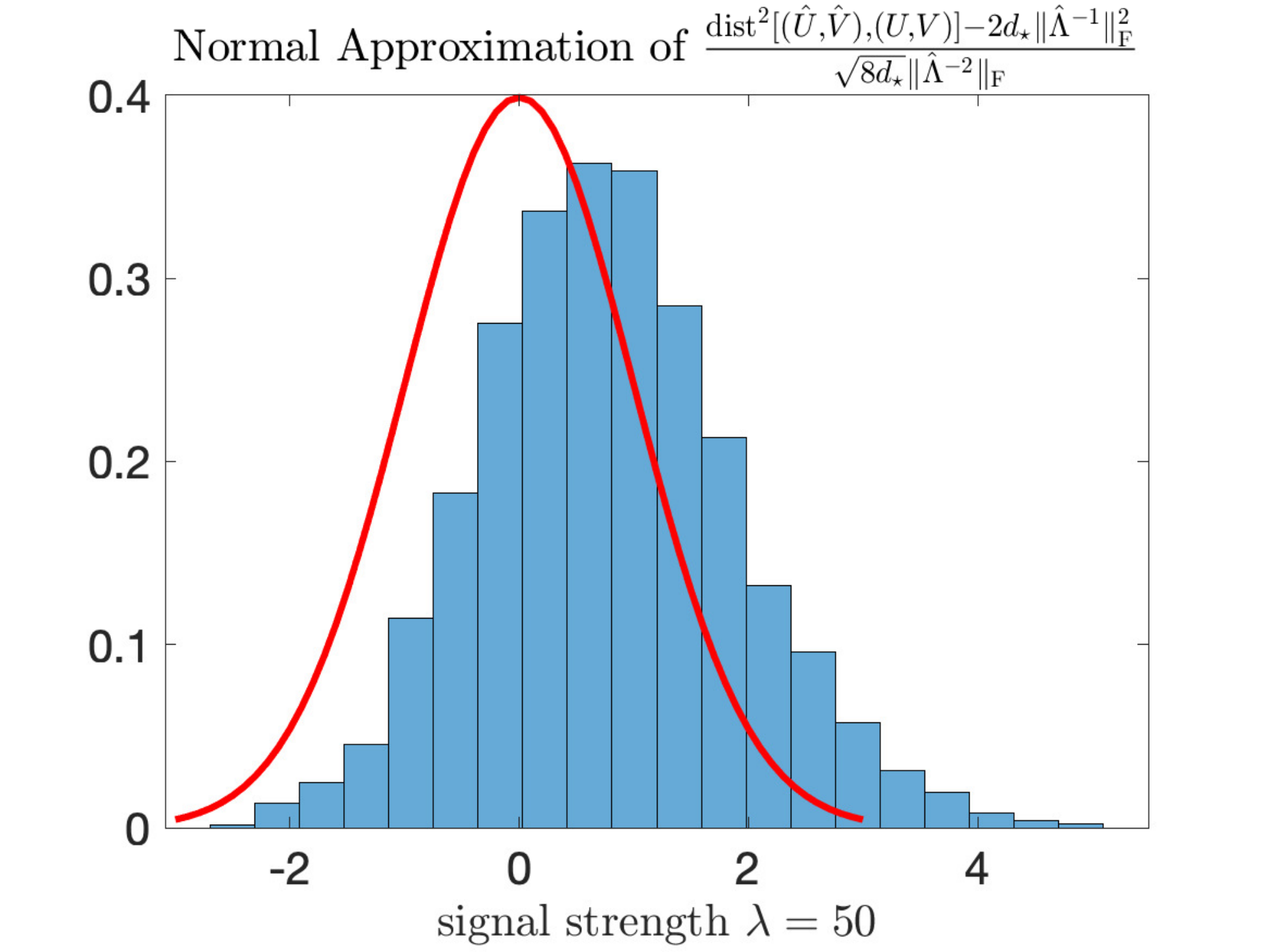}}\\
\subfigure[$\lambda=65$]{\includegraphics[height=50mm,width=60mm]{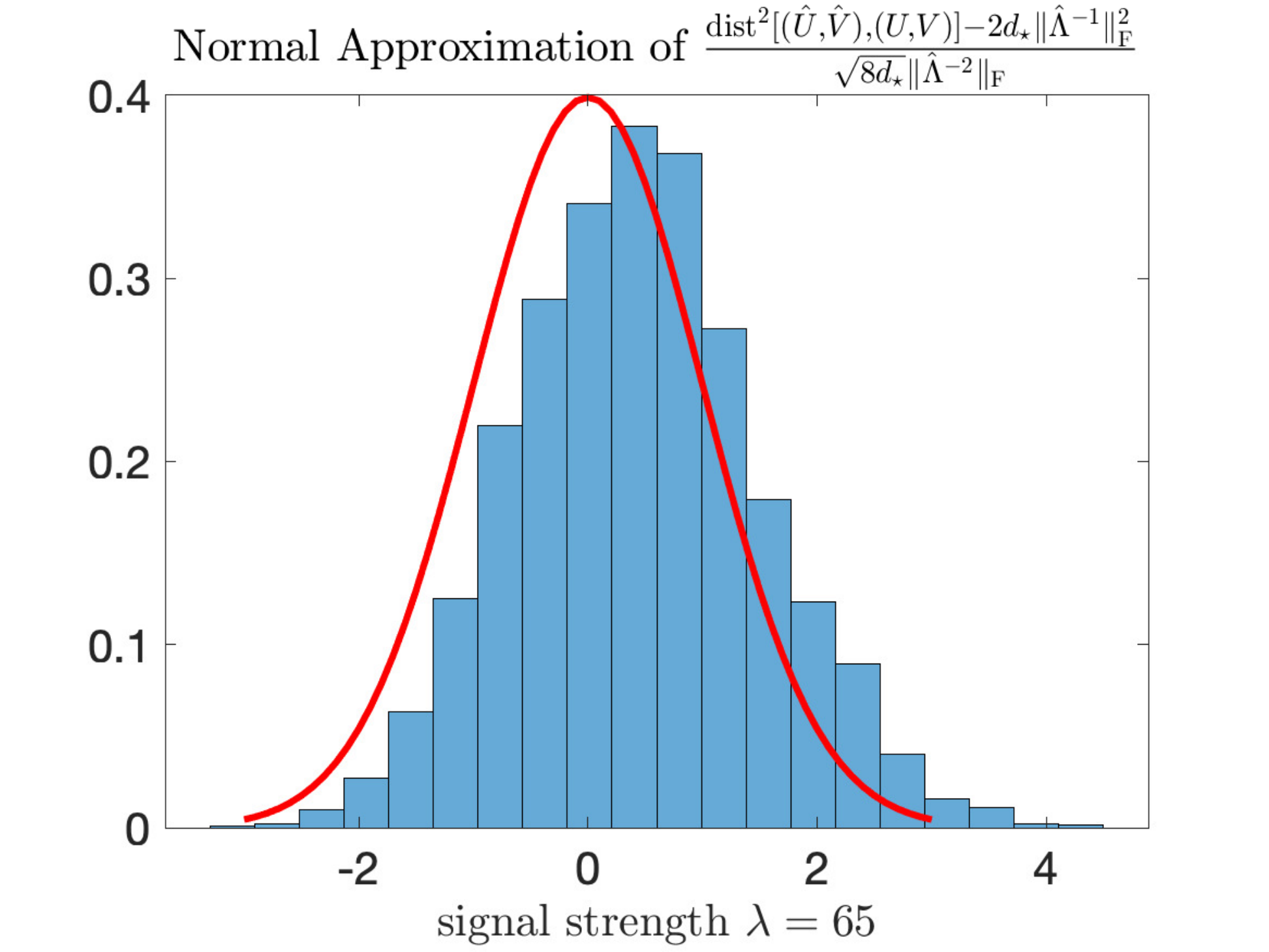}}
\subfigure[$\lambda=75$]{\includegraphics[height=50mm,width=60mm]{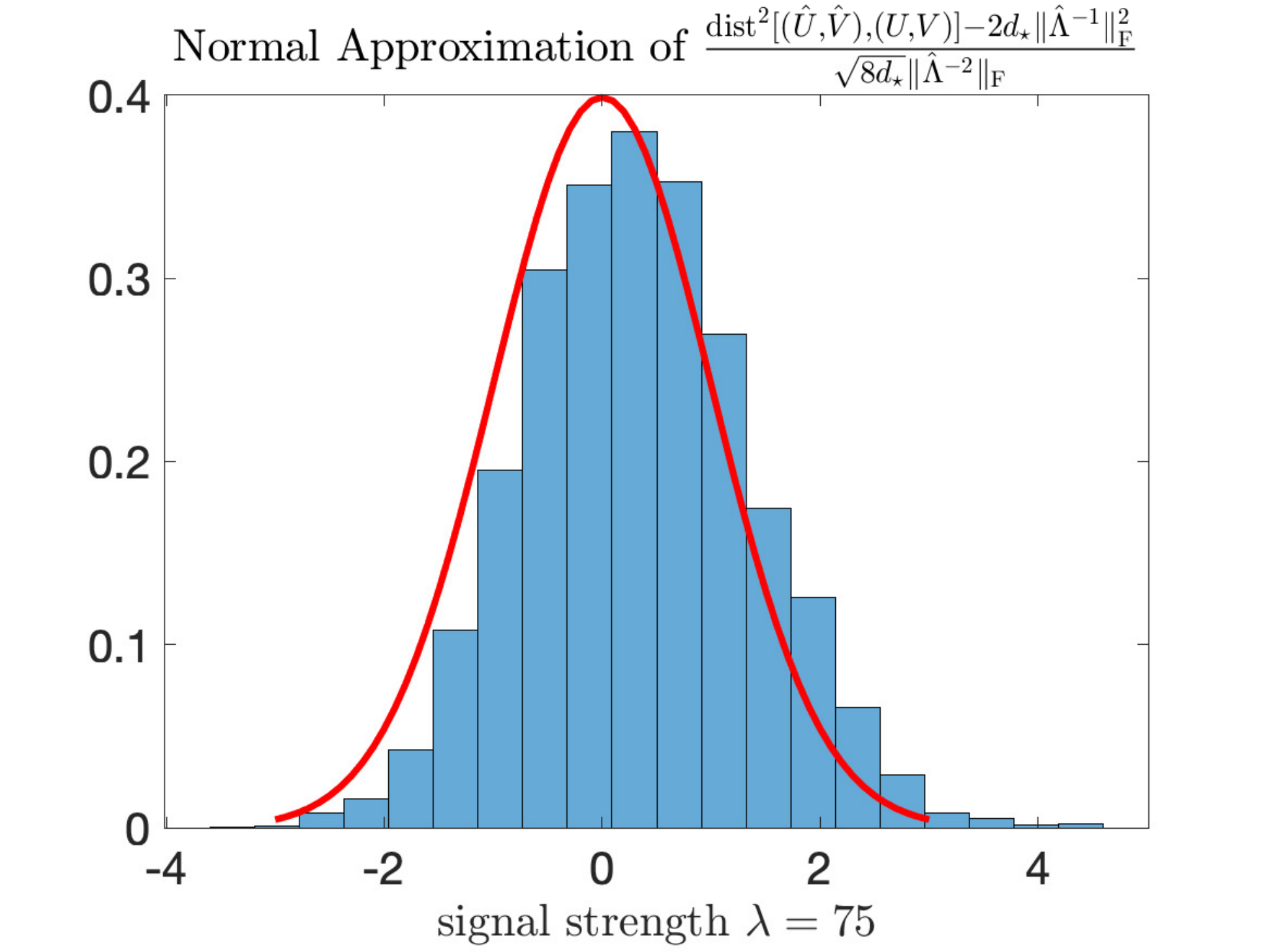}}
\caption{Normal approximation of $\frac{{\rm dist}^2[(\hat U,\hat V), (U,V)]-2d_{\star}\|\hat\Lambda^{-1}\|_{\rm F}^2}{\sqrt{8d_{\star}}\|\hat\Lambda^{-2}\|_{\rm F}}$ with $d_1=d_2=100$ and $r=6$. The density histogram is based on $5000$ realizations from independent experiments. The empirical singular values $\hat\Lambda={\rm diag}(\hat\lambda_1,\cdots,\hat\lambda_r)$ are calculated from $\hat M$. The red curve presents p.d.f. of  standard normal distributions. Since $\hat\lambda_j$ over-estimates $\lambda_j$, it explains why the density histogram shifts to the right compared with the standard normal curve, especially when signal strength $\lambda$ is not significantly strong. }
\label{fig:normal_1}
\end{figure}

{\it Simulation $5$.} 
We apply the $1$st order approximation and show normal approximation of $\big({\rm dist}^2[(\hat U,\hat V), (U,V)]-2d_{\star}\|\tilde\Lambda^{-1}\|_{\rm F}^2\big)/\sqrt{8d_{\star}}\|\tilde\Lambda^{-2}\|_{\rm F}$ when $d_1=d_2=100$ and $r=6$.   Here, $\tilde \Lambda={\rm diag}(\tilde\lambda_1,\cdots,\tilde\lambda_r)$ denotes the top-$r$ shrinkage estimators of $\lambda_j$s as in (\ref{eq:tilde_lambdaj_def}). The signal strength $\lambda=25,50,65,75$. For each $\lambda$, we record $\big({\rm dist}^2[(\hat U,\hat V), (U,V)]-2d_{\star}\|\tilde\Lambda^{-1}\|_{\rm F}^2\big)/\sqrt{8d_{\star}}\|\tilde\Lambda^{-2}\|_{\rm F}$ from $5000$ thousand independent experiments and draw the density histogram.  The results are shown in Figure~\ref{fig:normal_2}. In comparison with {\it Simulation $4$} and Figure~\ref{fig:normal_1}, we conclude that $2d_{\star}\|\tilde\Lambda^{-1}\|_{\rm F}^2$ works better than $2d_{\star}\|\hat\Lambda^{-1}\|_{\rm F}^2$ for bias corrections. Indeed, normal approximation of $\big({\rm dist}^2[(\hat U,\hat V), (U,V)]-2d_{\star}\|\tilde\Lambda^{-1}\|_{\rm F}^2\big)/\sqrt{8d_{\star}}\|\tilde\Lambda^{-2}\|_{\rm F}$  is already satisfactory when signal strength $\lambda=35$, compared with $\lambda\geq 75$ when $\hat\Lambda$ is used. %However,  normal approximation of $\big({\rm dist}^2[(\hat U,\hat V), (U,V)]-2d_{\star}\|\hat\Lambda^{-1}\|_{\rm F}^2\big)/\sqrt{8d_{\star}}\|\hat\Lambda^{-2}\|_{\rm F}$ is satisfactory only when $\lambda\geq 75$. 

\begin{figure}
\centering     %%% not \center
\subfigure[$\lambda=25$]{\includegraphics[height=50mm,width=60mm]{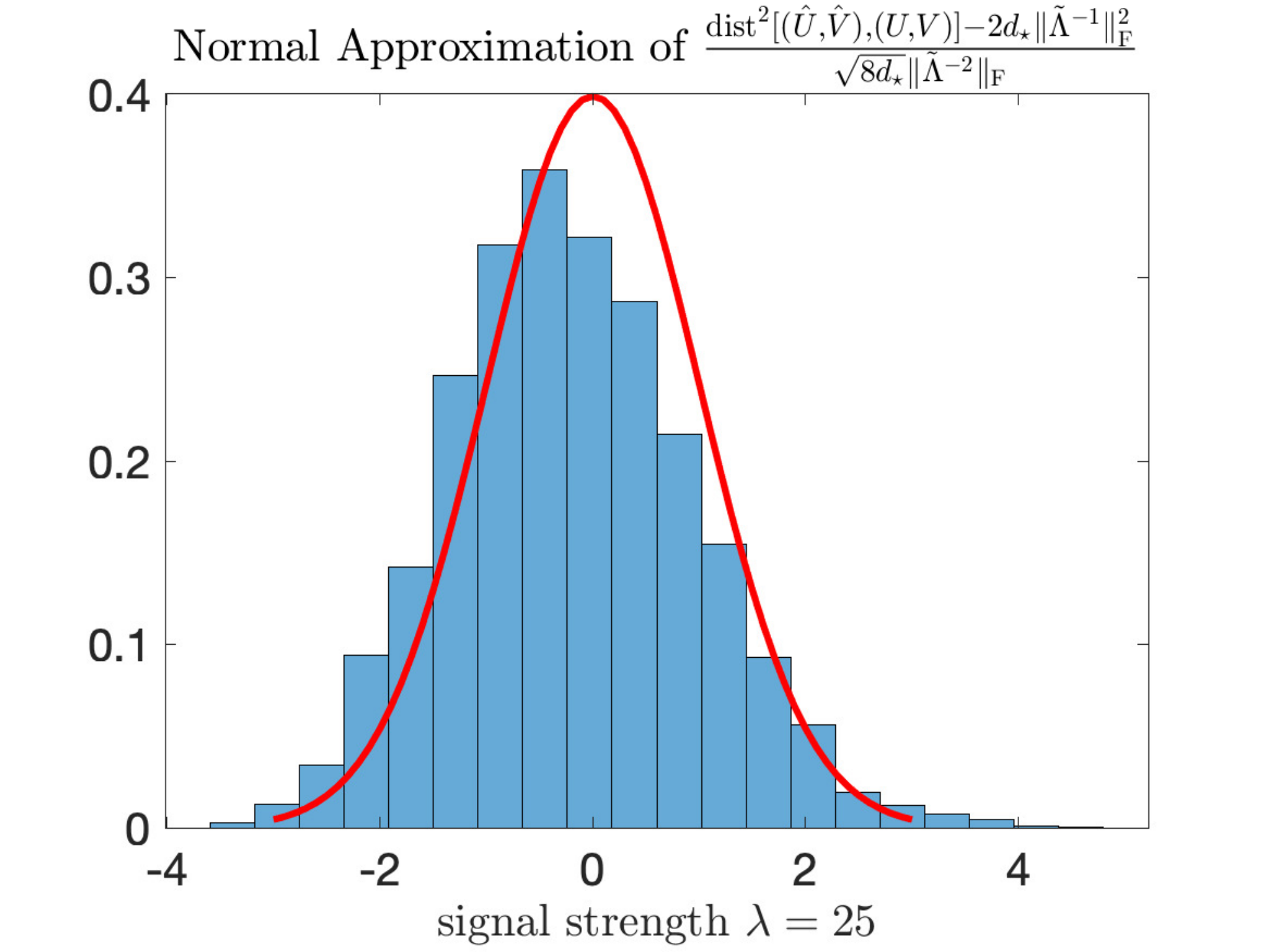}}
\subfigure[$\lambda=35$]{\includegraphics[height=50mm,width=60mm]{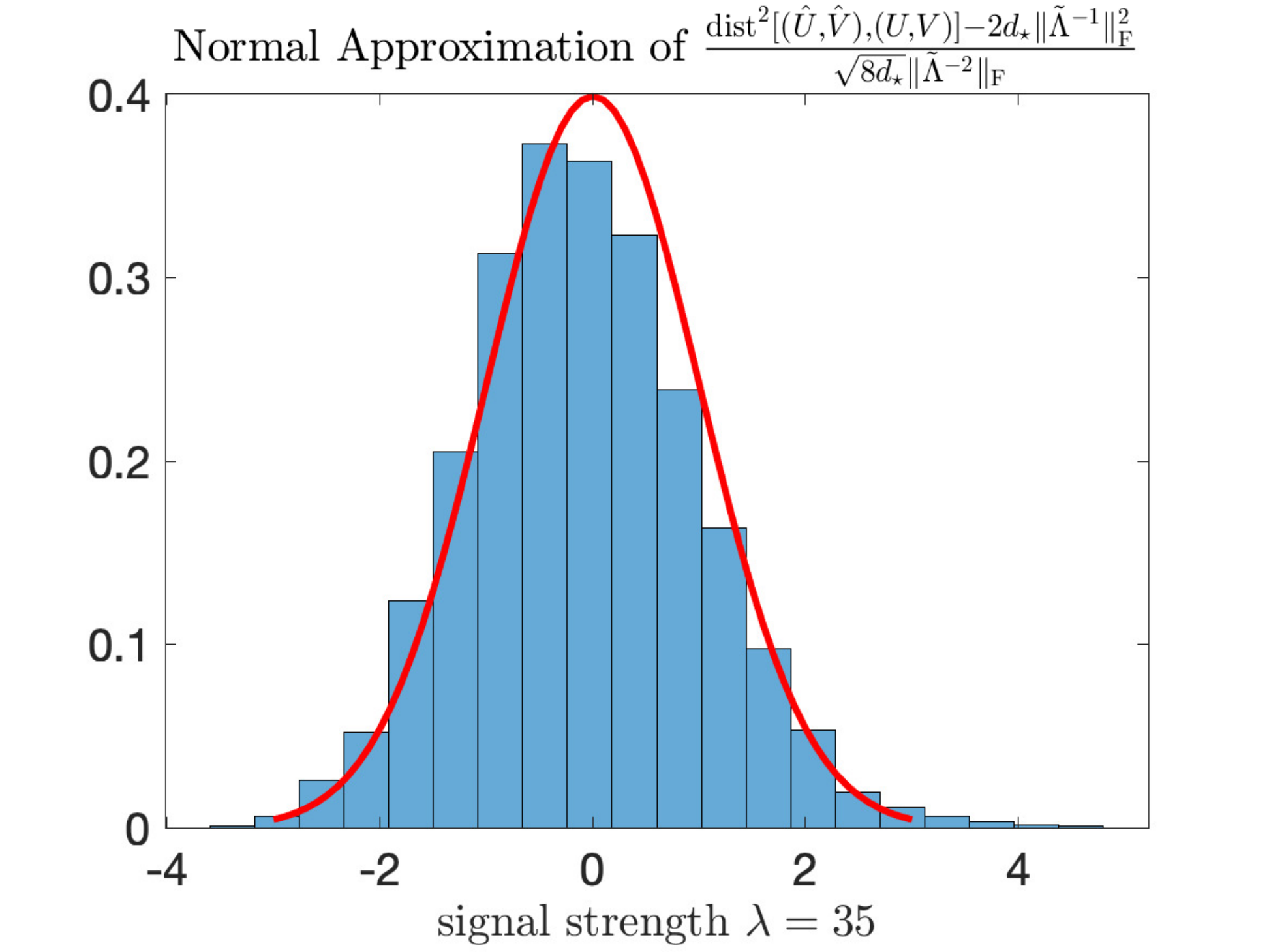}}\\
\subfigure[$\lambda=45$]{\includegraphics[height=50mm,width=60mm]{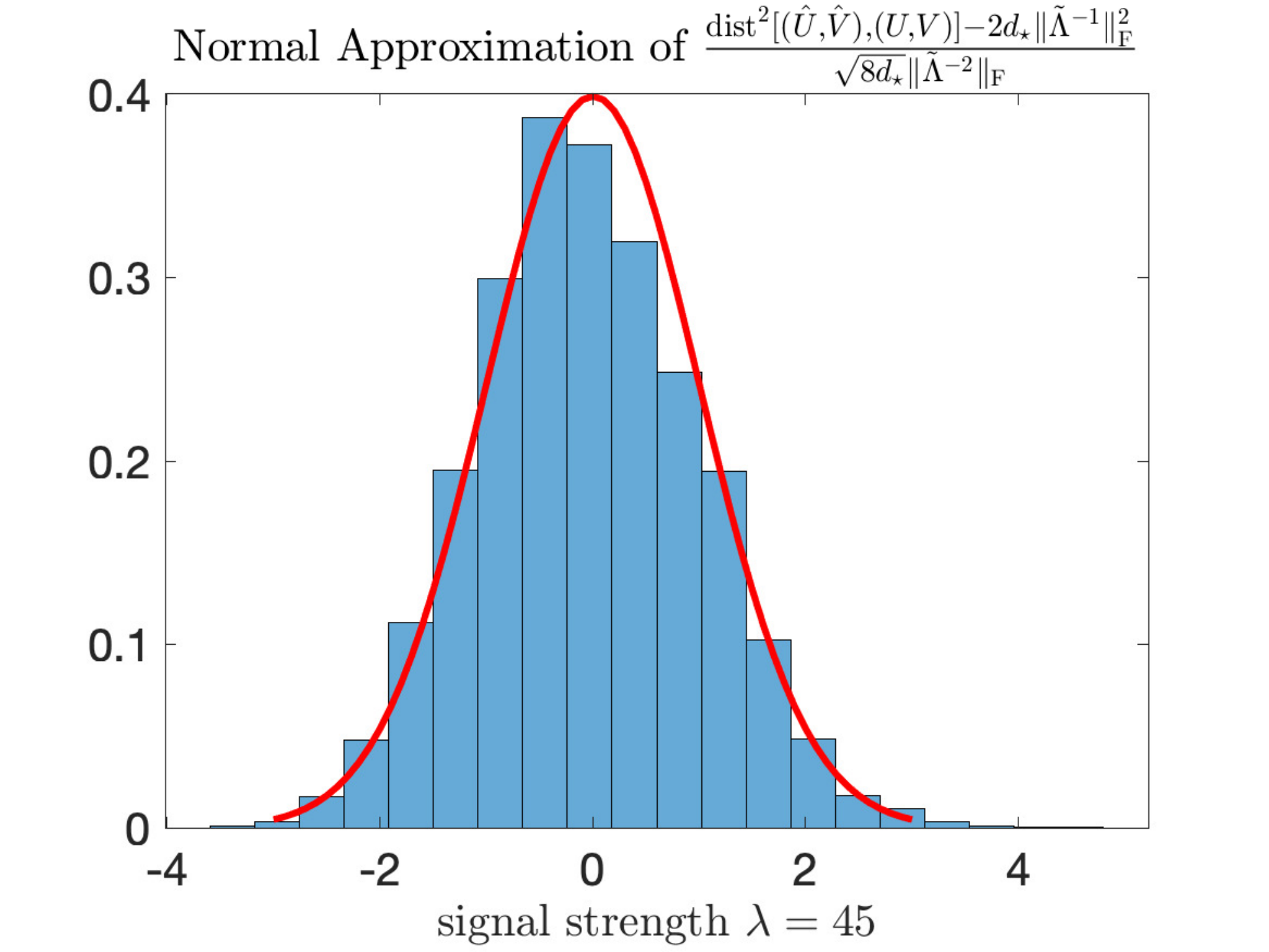}}
\subfigure[$\lambda=55$]{\includegraphics[height=50mm,width=60mm]{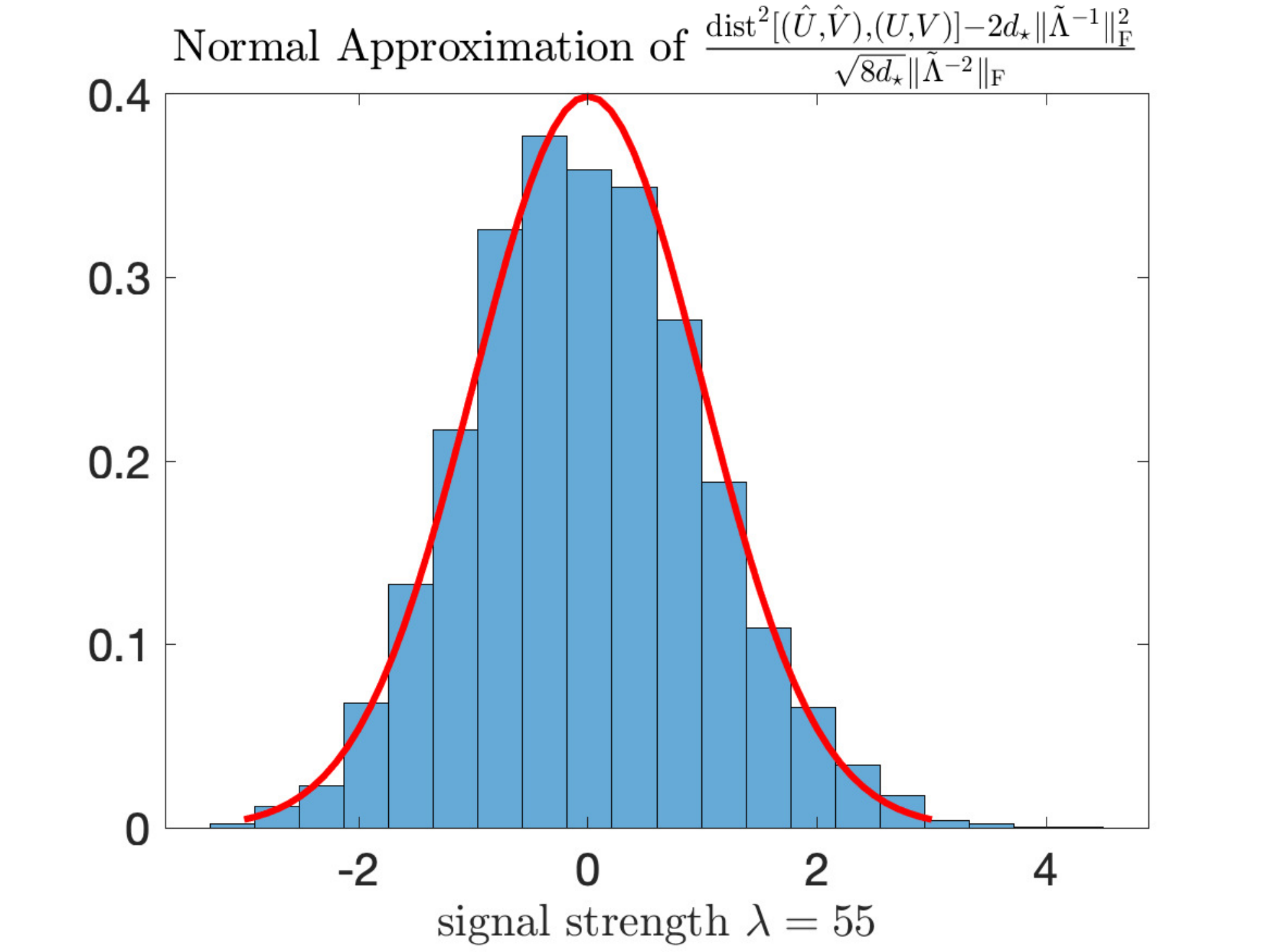}}
\caption{Normal approximation of $\frac{{\rm dist}^2[(\hat U,\hat V), (U,V)]-2d_{\star}\|\tilde\Lambda^{-1}\|_{\rm F}^2}{\sqrt{8d_{\star}}\|\tilde\Lambda^{-2}\|_{\rm F}}$ with $d_1=d_2=100$ and $r=6$. The density histogram is based on $5000$ realizations from independent experiments. The shrinkage estimators $\tilde\Lambda={\rm diag}(\tilde\lambda_1,\cdots,\tilde\lambda_r)$ are calculated as eq. (\ref{eq:tilde_lambdaj_def}). The red curve presents p.d.f. of  standard normal distributions. Since $d_1=d_2$, we apply  first order bias corrections to ${\rm dist}^2[(\hat U,\hat V), (U,V)]$. In comparison with {\it Simulation $4$} and Figure~\ref{fig:normal_1} where $\hat\Lambda$ is used instead of $\tilde\Lambda$, we conclude that $2d_{\star}\|\tilde\Lambda^{-1}\|_{\rm F}^2$ is more accurate than $2d_{\star}\|\hat\Lambda^{-1}\|_{\rm F}^2$ for bias corrections. Indeed, we see that normal approximation of $\frac{{\rm dist}^2[(\hat U,\hat V), (U,V)]-2d_{\star}\|\tilde\Lambda^{-1}\|_{\rm F}^2}{\sqrt{8d_{\star}}\|\tilde\Lambda^{-2}\|_{\rm F}}$  is already satisfactory when signal strength $\lambda=35$.
}
\label{fig:normal_2}
\end{figure}

\section{Acknowledgement}
The author would like to thank Yik-Man Chiang for the insightful recommendations on applying the Residue theorem, and Jeff Yao for the encouragements on improving the former results.

\section{Proofs}\label{sec:proof}
We only provide the proof of Theorem~\ref{thm:hat_Theta} in this section. Proofs of other theorems are collected in the supplementary file. 
\subsection{Proof of Theorem~\ref{thm:hat_Theta}}
For notational simplicity., we assume $\lambda_i>0$ for $1\leq i\leq r$, i.e., the matrix $A$ is positively semidefinite. The proof is almost identical if $A$ has negative eigenvalues. 

Since $A$ is positively semidefinite, we have $\min_{1\leq i\leq r}|\lambda_i|=\lambda_r$. The condition in Theorem~\ref{thm:hat_Theta} is equivalent to $\lambda_r>2\|X\|$. Recall that  $\{\hat\lambda_i, \hat\theta_i\}_{i=1}^{d}$ denote the singular values and singular vectors of $\hat A$. 
Define the following contour plot $\gamma_A$  on the complex plane (shown as in Figure~\ref{fig:contour}):
\begin{figure}[H]
\centering
\includegraphics[scale=0.5]{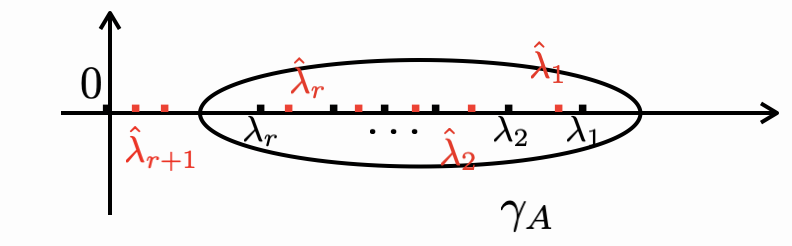}
\caption{The contour plot $\gamma_A$ which includes $\{\hat{\lambda}_i, \lambda_i\}_{i=1}^r$ leaving out $0$ and $\{\hat \lambda_i\}_{i=r+1}^d$. }
\label{fig:contour}
\end{figure}
{\noindent}, where the contour $\gamma_A$ is chosen such that $\min_{\eta\in\gamma_A}\min_{1\leq i\leq r}|\eta-\lambda_i|=\frac{\lambda_r}{2}$. 

Weyl's lemma implies that  $\max_{1\leq i\leq r}|\hat\lambda_i-\lambda_i|\leq \|X\|$. We observe that, when $\|X\|<\frac{\lambda_r}{2}$,  all $\{\hat\lambda_i\}_{i=1}^r$
are inside the contour $\gamma_A$ while $0$ and $\{\hat\lambda_i\}_{i=r+1}^d$ are outside of the contour $\gamma_A$. By Cauchy's integral formula, we get
\begin{align*}
\frac{1}{2\pi i}\oint_{\gamma_A}(\eta I-\hat A)^{-1}d\eta=&\sum_{i=1}^r\frac{1}{2\pi i}\oint_{\gamma_A}\frac{d\eta}{\eta-\hat \lambda_i}(\hat\theta_i \hat\theta_i^{\tran})+\sum_{i=r+1}^d\frac{1}{2\pi i}\oint_{\gamma_A}\frac{d\eta}{\eta-\hat \lambda_i}(\hat\theta_i \hat\theta_i^{\tran})\\
=&\sum_{i=1}^r \hat\theta_i\hat\theta_i^{\tran}=\hat\Theta\hat\Theta^{\tran}.
\end{align*}
As a result, we have
\begin{equation}\label{eq:hatThetahatTheta}
\hat\Theta\hat\Theta^{\tran}=\frac{1}{2\pi i}\oint_{\gamma_A}(\eta I-\hat A)^{-1}d\eta.
\end{equation}
Note that
\begin{align*}
(\eta I-\hat A)^{-1}=(\eta I - A- X)^{-1}=&\big[(\eta I-A)\big(I-\calR_A(\eta)X\big)\big]^{-1}\\
=&\big(I-\calR_A(\eta)X\big)^{-1}\calR_A(\eta)
\end{align*}
where $\calR_A(\eta):=(\eta I-A)^{-1}$. clearly 
$$
\big\|\calR_A(\eta)X \big\|\leq \|\calR_A(\eta)\|\|X\|\leq \frac{2\|X\|}{\lambda_r}< 1.
$$
Therefore, we write the Neumann series:
\begin{equation}\label{eq:neumann}
\big(I-\calR_A(\eta)X\big)^{-1}=I+\sum_{j\geq 1}[\calR_A(\eta)X]^j.
\end{equation}
By (\ref{eq:neumann}) and (\ref{eq:hatThetahatTheta}),  we get
\begin{align*}
\hat\Theta\hat\Theta^{\tran}=&\frac{1}{2\pi i}\oint_{\gamma_A}(\eta I-\hat A)^{-1}d\eta\\
=&\frac{1}{2\pi i}\oint_{\gamma_A}\calR_A(\eta) d\eta +\sum_{j\geq 1}\frac{1}{2\pi i}\oint_{\gamma_A}\big[\calR_A(\eta)X\big]^j\calR_A(\eta)d\eta.
\end{align*}
Clearly, $\frac{1}{2\pi i}\oint_{\gamma_A}\calR_A(\eta)d\eta=\Theta\Theta^{\tran}$, we end up with
$$
\hat\Theta\hat\Theta^{\tran}-\Theta\Theta^{\tran}=\calS_{A}(X):=\sum_{j\geq 1}\frac{1}{2\pi i}\oint_{\gamma_A}\big[\calR_A(\eta)X\big]^j\calR_A(\eta)d\eta.
$$
For $k\geq 1$, we define
\begin{equation}\label{eq:calS_Ak}
\calS_{A,k}(X)=\frac{1}{2\pi i}\oint_{\gamma_A}\big[\calR_A(\eta)X\big]^k\calR_A(\eta)d\eta
\end{equation}
which is essentially the $k$-th order perturbation. Therefore, we obtain
\begin{equation}\label{eq:hatTheta-Theta}
\hat\Theta\hat\Theta^{\tran}-\Theta\Theta^{\tran}=\sum_{k\geq 1}\calS_{A,k}(X).
\end{equation}
By (\ref{eq:hatTheta-Theta}), it suffices to derive explicit expression formulas for $\{\calS_{A,k}(X)\}_{k\geq 1}$.  Before dealing with general $k$, let us  derive $\calS_{A,k}(X)$ for $k=1,2$ to interpret the shared styles. 

To this end, we denote $I_r$ the $r\times r$ identity matrix and write 
$$
\calR_A(\eta)=\Theta(\eta\cdot I_r-\Lambda)^{-1}\Theta^{\tran}+\eta^{-1}\Theta_{\perp}\Theta_{\perp}^{\tran}=\sum_{j=1}^{d}\frac{1}{\eta-\lambda_j}\theta_j\theta_j^{\tran}
$$
where we set $\lambda_j=0$ for all $r+1\leq j\leq d$. Denote $P_j=\theta_j\theta_j^{\tran}$ for all $1\leq j\leq d$ which represents the spectral projector onto $\theta_j$. 

\paragraph{Derivation of $\calS_{A,1}(X)$.}
By the definition of $\calS_{A,1}(X)$,
\begin{align}
\calS_{A,1}(X)=&\frac{1}{2\pi i}\oint_{\gamma_A}\calR_{A}(\eta)X\calR_{A}(\eta)d\eta\nonumber\\
=&\sum_{j_1=1}^{d}\sum_{j_2=1}^{d}\frac{1}{2\pi i}\oint_{\gamma_A}\frac{d\eta}{(\eta-\lambda_{j_1})(\eta-\lambda_{j_2})}P_{j_1}XP_{j_2}.\label{eq:lemSM1_eq1}
\end{align}
{\it Case 1:} both $j_1$ and $j_2$ are greater than $r$. In this case, the contour integral in (\ref{eq:lemSM1_eq1}) is zero by Cauchy integral formula.\\
{\it Case 2}: only one of $j_1$ and $j_2$ is greater than $r$. W.L.O.G, let $j_2>r$, we get 
\begin{align*}
\sum_{j_1=1}^r\sum_{j_2>r}^d\frac{1}{2\pi i}&\oint_{\gamma_A} \frac{\eta^{-1}d\eta}{\eta-\lambda_{j_1}}P_{j_1}XP_{j_2}
=\sum_{j_1=1}^r\sum_{j_2>r}\lambda_{j_1}^{-1}P_{j_1}XP_{j_2}=\mathfrak{P}^{-1}X\mathfrak{P}^{\perp}. 
\end{align*}
{\it Case 3}: none of $j_1$ and $j_2$ is greater than $r$. Clearly, the contour integral in (\ref{eq:lemSM1_eq1}) is zero. 

To sum up, we conclude with $\calS_{A,1}(X)=\mathfrak{P}^{-1}X\mathfrak{P}^{\perp}+\mathfrak{P}^{\perp}X\mathfrak{P}^{-1}$.

\paragraph{Derivation of $\calS_{A,2}(X)$.}
By the definition of $\calS_{A,2}(X)$, 
\begin{align}
\calS_{A,2}(X)=&\frac{1}{2\pi i}\oint_{\gamma_A}\calR_A(\eta)X\calR_{A}(\eta)X\calR_{A}(\eta)d\eta\nonumber\\
=&\sum_{j_1=1}^{d}\sum_{j_2=1}^{d}\sum_{j_3=1}^{d}\frac{1}{2\pi i}\oint_{\gamma_A}\frac{d\eta}{(\eta-\lambda_{j_1})(\eta-\lambda_{j_2})(\eta-\lambda_{j_3})}P_{j_1}XP_{j_2}XP_{j_3}.\label{eq:lemSM2_eq1}
\end{align}
{\it Case 1}: all $j_1, j_2, j_3$ are greater than $r$. The contour integral in (\ref{eq:lemSM2_eq1}) is zero by Cauchy integral formula. \\
{\it Case 2}: two of $j_1,j_2,j_3$ are greater than $r$. W.L.O.G., let $j_1\leq r$ and $j_2,j_3>r$, we get
\begin{align*}
\sum_{j_1=1}^{r}\sum_{j_2,j_3>r}^{d}\frac{1}{2\pi i }&\oint_{\gamma_A}\frac{\eta^{-2}d\eta}{\eta-\lambda_{j_1}}P_{j_1}XP_{j_2}XP_{j_3}\\
=&\sum_{j_1=1}^{r}\sum_{j_2,j_3>r}^{d}\frac{1}{\lambda_{j_1}^2}P_{j_1}XP_{j_2}XP_{j_3}=\mathfrak{P}^{-2}X\mathfrak{P}^{\perp}X\mathfrak{P}^{\perp}. 
\end{align*}
{\it Case 3}: one of $j_1,j_2,j_3$ is greater than $r$. W.L.O.G., let $j_1,j_2\leq r$ and $j_3>r$, we get
\begin{align*}
\sum_{j_1,j_2=1}^{r}\sum_{j_3>r}^{d}\frac{1}{2\pi i}&\oint_{\gamma_A}\frac{\eta^{-1} d\eta}{(\eta-\lambda_{j_1})(\eta-\lambda_{j_2})}P_{j_1}XP_{j_2}XP_{j_3}\\
=&\sum_{j_1=j_2=1}^{r}\sum_{j_3>r}^{d}\frac{1}{2\pi i}\oint_{\gamma_A}\frac{\eta^{-1} d\eta}{(\eta-\lambda_{j_1})^2}P_{j_1}XP_{j_1}XP_{j_3}\\
&\quad +\sum_{j_1\neq j_2\geq 1}^{r}\sum_{j_3>r}^{d}\frac{1}{2\pi i}\oint_{\gamma_A}\frac{\eta^{-1} d\eta}{(\eta-\lambda_{j_1})(\eta-\lambda_{j_2})}P_{j_1}XP_{j_2}XP_{j_3}\\
=&-\sum_{j_1=1}^r\lambda_{j_1}^{-2}P_{j_1}XP_{j_1}X\mathfrak{P}^{\perp}-\sum_{j_1\neq j_2\geq 1}^{r}(\lambda_{j_1}\lambda_{j_2})^{-1}P_{j_1}XP_{j_2}X\mathfrak{P}^{\perp}\\
=&-\mathfrak{P}^{-1}X\mathfrak{P}^{-1}X\mathfrak{P}^{\perp}.
\end{align*}
{\it Case 4:} none of $j_1,j_2,j_3$ is greater than $r$. Clearly, the contour integral in (\ref{eq:lemSM2_eq1}) is zero. 

To sum up, we obtain 
\begin{align*}
\calS_{A,2}(X)=&\big(\mathfrak{P}^{-2}X\mathfrak{P}^{\perp}X\mathfrak{P}^{\perp}+\mathfrak{P}^{\perp}X\mathfrak{P}^{-2}X\mathfrak{P}^{\perp}+\mathfrak{P}^{\perp}X\mathfrak{P}^{\perp}X\mathfrak{P}^{-2}\big)\\
-&\big(\mathfrak{P}^{\perp}X\mathfrak{P}^{-1}X\mathfrak{P}^{-1}+\mathfrak{P}^{-1}X\mathfrak{P}^{\perp}X\mathfrak{P}^{-1}+\mathfrak{P}^{-1}X\mathfrak{P}^{-1}X\mathfrak{P}^{\perp}\big).
\end{align*}

\paragraph{Derivation of $\calS_{A,k}(X)$ for general k.}
Recall the definition of $\calS_{A,k}(X)$, we write
\begin{align}
\calS_{A,k}(X)=\sum_{j_1,\cdots,j_{k+1}\geq 1}^d\frac{1}{2\pi i}\oint_{\gamma_A}\Big(\prod_{i=1}^{k+1}\frac{1}{\eta-\lambda_{j_i}}\Big)d\eta P_{j_1}XP_{j_2}X\cdots P_{j_k}XP_{j_{k+1}}.\label{eq:lemSMk_eq1}
\end{align}
We consider components of summations in (\ref{eq:lemSMk_eq1}). For instance, consider the cases that some $k_1$ indices from $\{j_1,\cdots,j_{k+1}\}$ are not larger than $r$.
W.L.O.G., let $j_1,\cdots,j_{k_1}\leq r$ and $j_{k_1+1},\cdots,j_{k+1}>r$. By Cauchy integral formula, the integral in (\ref{eq:lemSMk_eq1}) is zero if $k_1=0$ or $k_1=k+1$. Therefore, we only focus on the cases that $1\leq k_1\leq k$.
 Then,  
\begin{align*}
\sum_{j_1,\cdots,j_{k_1}\geq 1}^r&\sum_{j_{k_1+1},\cdots,j_{k+1}>r}^d\frac{1}{2\pi i}\oint_{\gamma_A}\Big(\prod_{i=1}^{k_1}\frac{1}{\eta-\lambda_{j_i}}\Big)\eta^{k_1-k-1}d\eta P_{j_1}XP_{j_2}X\cdots P_{j_k}XP_{j_{k+1}}\\
=&\sum_{j_1,\cdots,j_{k_1}\geq 1}^r\frac{1}{2\pi i}\oint_{\gamma_A}\Big(\prod_{i=1}^{k_1}\frac{1}{\eta-\lambda_{j_i}}\Big)\eta^{k_1-k-1}d\eta P_{j_1}XP_{j_2}X\cdots P_{j_{k_1}}X\mathfrak{P}^{\perp}X\cdots X\mathfrak{P}^{\perp}.
\end{align*}
Recall that our goal is to prove
$$
\calS_{A,k}(X)=\sum_{\bs:s_1+\cdots+s_{k+1}=k}(-1)^{1+\tau(\bs)}\cdot \mathfrak{P}^{-s_1}X\mathfrak{P}^{-s_2}X\cdots X\mathfrak{P}^{-s_{k+1}}.
$$
Accordingly, in the above summations, we consider the components, where $s_1,\cdots,s_{k_1}\geq 1$ and $s_{k_1+1}=\cdots=s_{k+1}=0$, namely, 
$$
\sum_{\substack{s_1+\cdots+s_{k_1}=k\\ s_j\geq 1}}(-1)^{k_1+1}\mathfrak{P}^{-s_1}X\cdots X\mathfrak{P}^{-s_{k_1}}X\mathfrak{P}^{\perp}\cdots X\mathfrak{P}^{\perp}.
$$
It turns out that we need to prove
\begin{align*}
\sum_{j_1,\cdots,j_{k_1}\geq 1}^r\frac{1}{2\pi i}\oint_{\gamma_A}&\Big(\prod_{i=1}^{k_1}\frac{1}{\eta-\lambda_{j_i}}\Big)\eta^{k_1-k-1}d\eta P_{j_1}XP_{j_2}X\cdots P_{j_{k_1}}\nonumber\\
=&\sum_{j_1,\cdots,j_{k_1}\geq 1}^r\sum_{\substack{s_1+\cdots+s_{k_1}=k\\ s_j\geq 1}}(-1)^{k_1+1}\frac{1}{\lambda_{j_1}^{s_1}\cdots \lambda_{j_{k_1}}^{s_{k_1}}} P_{j_1}XP_{j_2}X\cdots XP_{j_{k_1}}.
\end{align*}
It suffices to prove that for all $\bj=(j_1,\dots,j_{k_1})\in\{1,\cdots,r\}^{k_1}$, 
\begin{equation}
\frac{1}{2\pi i}\oint_{\gamma_A}\frac{d\eta}{(\eta-\lambda_{j_1})\cdots(\eta-\lambda_{j_{k_1}})\eta^{k+1-k_1}}=\sum_{\substack{s_1+\cdots+s_{k_1}=k\\ s_j\geq 1}}(-1)^{k_1+1}\frac{1}{\lambda_{j_1}^{s_1}\cdots \lambda_{j_{k_1}}^{s_{k_1}}}.\label{eq:lemSMk_eq2}
\end{equation}
To prove (\ref{eq:lemSMk_eq2}), we rewrite its right hand side. Given any $\bj=(j_1,\cdots,j_{k_1})\in\{1,\cdots,r\}^{k_1}$, define
$$
\bv_i(\bj):=\big\{1\leq t\leq k_1: j_t=i\big\} \quad {\rm for}\ 1\leq i\leq r
$$
, that is, $\bv_i(\bj)$ contains the location $s$ such that $\lambda_{j_s}=\lambda_i$.  Meanwhile, denote $v_i(\bj)={\rm Card}\big(\bv_i(\bj)\big)$. Then, the right hand side of (\ref{eq:lemSMk_eq2}) is written as
\begin{align*}
\sum_{\substack{s_1+\cdots+s_{k_1}=k\\ s_j\geq 1}}(-1)^{k_1+1}\frac{1}{\lambda_{j_1}^{s_1}\cdots \lambda_{j_{k_1}}^{s_{k_1}}}=(-1)^{k_1+1}\sum_{\substack{s_1+\cdots+s_{k_1}=k\\s_j\geq 1}}\lambda_1^{-\sum_{p\in\bv_1(\bj)}s_p}\cdots \lambda_r^{-\sum_{p\in\bv_r(\bj)}s_p}.
\end{align*}
Now, we denote $t_i(\bj)=\sum_{p\in\bv_i(\bj)}s_p$ for $1\leq i\leq r$, we can write the above equation as 
\begin{align*}
\sum_{\substack{s_1+\cdots+s_{k_1}=k\\ s_j\geq 1}}(-1)^{k_1+1}&\frac{1}{\lambda_{j_1}^{s_1}\cdots \lambda_{j_{k_1}}^{s_{k_1}}}=(-1)^{k_1+1}\sum_{\substack{t_1(\bj)+\cdots+t_r(\bj)=k\\ t_i(\bj)\geq v_i(\bj)\\t_i(\bj)=0\ {\rm if}\ v_i(\bj)=0} }\prod_{i: v_i(\bj)\geq 1}{t_i(\bj)-1 \choose v_i(\bj)-1}\lambda_i^{-t_i(\bj)}\\
=&(-1)^{k_1+1}\sum_{\substack{t_1(\bj)+\cdots+t_r(\bj)=k-k_1\\ t_i(\bj)=0\ {\rm if}\ v_i(\bj)=0} }\prod_{i: v_i(\bj)\geq 1}{t_i(\bj)+v_i(\bj)-1 \choose v_i(\bj)-1}\lambda_i^{-t_i(\bj)-v_i(\bj)}
\end{align*}
where the last equality is due to the fact $v_1(\bj)+\cdots+v_r(\bj)=k_1$. Similarly, the left hand side of (\ref{eq:lemSMk_eq2}) can be written as 
$$
\frac{1}{2\pi i}\oint_{\gamma_A}\frac{d\eta}{(\eta-\lambda_{j_1})\cdots(\eta-\lambda_{j_{k_1}})\eta^{k+1-k_1}}=\frac{1}{2\pi i}\oint_{\gamma_A}\frac{d\eta}{(\eta-\lambda_{1})^{v_1(\bj)}\cdots(\eta-\lambda_{j_r})^{v_r(\bj)}\eta^{k+1-k_1}}.
$$
Therefore, in order to prove (\ref{eq:lemSMk_eq2}), it suffices to prove that for any $\bj=(j_1,\cdots,j_{k_1})$ the following equality holds
\begin{equation}
\frac{1}{2\pi i}\oint_{\gamma_A}\frac{d\eta}{(\eta-\lambda_{1})^{v_1}\cdots(\eta-\lambda_{j_r})^{v_r}\eta^{k+1-k_1}}=(-1)^{k_1+1}\sum_{\substack{t_1+\cdots+t_r=k-k_1\\t_i=0\ {\rm if}\ v_i=0} }\prod_{i: v_i\geq 1}^r{t_i+v_i-1 \choose v_i-1}\lambda_i^{-t_i-v_i}\label{eq:lemSMk_obj}
\end{equation}
where we omitted the index $\bj$ in definitions of $v_i(\bj)$ and $t_i(\bj)$ without causing any confusions. The non-negative numbers $v_1+\cdots+v_{r}=k_1$. We define the function 
$$
\varphi(\eta)=\frac{1}{(\eta-\lambda_1)^{v_1}\cdots(\eta-\lambda_r)^{v_r}\eta^{k+1-k_1}}
$$
and we will calculate $\frac{1}{2\pi i}\oint_{\gamma_A}\varphi(\eta)d\eta$ by Residue theorem. Indeed, by Residue theorem,
$$
\frac{1}{2\pi i}\oint_{\gamma_A}\varphi(\eta)d\eta=-{\rm Res}(\varphi,\eta=\infty)-{\rm Res}(\varphi,\eta=0).
$$
Clearly, ${\rm Res}(\varphi, \eta=\infty)=0$ and it suffices to calculate ${\rm Res}(\varphi,\eta=0)$. To this end, let $\gamma_0$ be a contour plot around $0$ where none of $\{\lambda_k\}_{k=1}^r$ is inside it. Then,
\begin{align*}
{\rm Res}(\varphi,\eta=0)=\frac{1}{2\pi i}\oint_{\gamma_0}\varphi(\eta)d\eta.
\end{align*}
By Cauchy integral formula, we obtain
\begin{align*}
{\rm Res}(\varphi,\eta=0)=\frac{1}{(k-k_1)!}\Big[\prod_{i: v_i\geq 1}^r(\eta-\lambda_{i})^{-v_i}\Big]^{(k-k_1)}\Big|_{\eta=0}
\end{align*}
where we denote by $f(x)^{(k-k_1)}$ the $k-k_1$-th order differentiation of $f(x)$. Then,  we use general Leibniz rule and get
\begin{align*}
{\rm Res}(\varphi,\eta=0)=&\frac{1}{(k-k_1)!}\sum_{\substack{t_1+\cdots+t_r=k-k_1\\ t_i=0\ {\rm if}\ v_i=0}} \frac{(k-k_1)!}{t_1!t_2!\cdots t_r!}\prod_{i: v_i\geq 1}^r\Big[(\eta-\lambda_{i})^{-v_i}\Big]^{(t_i)}\Big|_{\eta=0}\\
=&(-1)^{k-k_1}\sum_{\substack{t_1+\cdots+t_r=k-k_1\\ t_i=0\ {\rm if}\ v_i=0}} \prod_{i:v_i\geq 1}^r\frac{v_i(v_i+1)\cdots(v_i+t_i-1)}{t_i!}(-\lambda_i)^{-v_i-t_i}\\
=&(-1)^{k-k_1}\sum_{\substack{t_1+\cdots+t_r=k-k_1\\ t_i=0\ {\rm if}\ v_i=0}} \prod_{i:v_i\geq 1}^r{t_i+v_i-1\choose v_i-1}(-\lambda_i)^{-v_i-t_i}\\
=&(-1)^{2k-k_1}\sum_{\substack{t_1+\cdots+t_r=k-k_1\\ t_i=0\ {\rm if}\ v_i=0}} \prod_{i:v_i\geq 1}^r{t_i+v_i-1\choose v_i-1}\lambda_i^{-v_i-t_i}.
\end{align*}
Therefore, 
$$
\frac{1}{2\pi i}\oint_{\gamma_A}\varphi(\eta)d\eta=(-1)^{k_1+1}\sum_{\substack{t_1+\cdots+t_r=k-k_1\\ t_i=0\ {\rm if}\ v_i=0}} \prod_{i:v_i\geq 1}^r{t_i+v_i-1\choose v_i-1}\lambda_i^{-v_i-t_i}
$$
which proves (\ref{eq:lemSMk_obj}). We conclude the proof of Theorem~\ref{thm:hat_Theta}.

%%%%%%%%%%%%%%%%%%%%%%%%%%%%%%%%%%%%%%%%%%

\bibliographystyle{apa}
\bibliography{refer}

\newpage

\setcounter{page}{1}
\setcounter{section}{0}
\appendix

\begin{center}
	{\LARGE Supplement to ``Normal Approximation and Confidence Region of Singular Subspaces"\footnote{Dong Xia is an Assistant Professor in Department of Mathematics at Hong Kong University of Science and Technology, Kowloon, Hong Kong.  E-mail: madxia@ust.hk.}}
	
	\bigskip\medskip
	
	 Dong Xia
	\ \par Hong Kong University of Science and Technology
\end{center}

\begin{abstract}
	In this Supplement, we provide proofs for the main results and technical lemmas. 
\end{abstract}

\section{Proofs}

\subsection{Proof of Theorem~\ref{thm:distE_normal}}
By $\rank(\hat\Theta)=\rank(\Theta)=2r$,  we get
$$
{\rm dist}^2[(\hat U, \hat V), (U,V)]=\|\hat\Theta\hat\Theta^{\tran}-\Theta\Theta^{\tran}\|_{\rm F}^2=4r-2\big<\hat\Theta\hat\Theta^{\tran}, \Theta\Theta^{\tran}\big>.
$$
 Since $X$ is random, we shall take care of the ``size" of $X$. 
Observe that $\|X\|=\|Z\|$ and the operator norm of $Z$ is well-known (see, e.g., \citep{tao2012topics} and \citep{vershynin2010introduction} ). Indeed, there exist some absolute constants $C_1,C_2,c_1>0$ such that 
\begin{equation}\label{eq:Z_norm}
\EE \|X\|\leq C_1\sqrt{d_{\max}}\quad {\rm and}\quad \PP\big(\|X\|\geq C_2\sqrt{d_{\max}}\big)\leq e^{-c_1d_{\max}} 
\end{equation}
where $d_{\max}=\max\{d_1,d_2\}$. Meanwhile, $\EE^{1/p}\|X\|^p\leq C_3\sqrt{d_{\max}}$ for all integer $p\geq 1$. See \cite[Lemma~3]{koltchinskii2016perturbation}.

Denote the event $\calE_1:=\{\|X\|\leq C_2\sqrt{{d}_{\max}}\}$ so that $\PP(\calE_1)\geq 1-e^{-c_1d_{\max}}$. Assume that $\lambda_r>2C_2\sqrt{d_{\max}}$, our analysis is conditioned on $\calE_1$. By Theorem~\ref{thm:hat_Theta}, on event $\calE_1$, we have 
$$
\hat\Theta\hat\Theta^{\tran}=\Theta\Theta^{\tran}+\calS_{A,1}(X)+\calS_{A,2}(X)+\sum_{k\geq 3}\calS_{A,3}(X)
$$
where $\calS_{A,1}(X)=\mathfrak{P}^{-1}X\mathfrak{P}^{\perp}+\mathfrak{P}^{\perp}X\mathfrak{P}^{-1}$ and 
\begin{align*}
\calS_{A,2}(X)=&\big(\mathfrak{P}^{-2}X\mathfrak{P}^{\perp}X\mathfrak{P}^{\perp}+\mathfrak{P}^{\perp}X\mathfrak{P}^{-2}X\mathfrak{P}^{\perp}+\mathfrak{P}^{\perp}X\mathfrak{P}^{\perp}X\mathfrak{P}^{-2}\big)\\
-&\big(\mathfrak{P}^{\perp}X\mathfrak{P}^{-1}X\mathfrak{P}^{-1}+\mathfrak{P}^{-1}X\mathfrak{P}^{\perp}X\mathfrak{P}^{-1}+\mathfrak{P}^{-1}X\mathfrak{P}^{-1}X\mathfrak{P}^{\perp}\big).
\end{align*}
Therefore,  we get
\begin{align*}
\|\hat\Theta\hat\Theta^{\tran}-\Theta\Theta^{\tran}\|_{\rm F}^2=&2\tr\big(\mathfrak{P}^{-1}X\mathfrak{P}^{\perp}X\mathfrak{P}^{-1}\big)-2\sum_{k\geq 3}\big<\Theta\Theta^{\tran}, \calS_{A,k}(X)\big>\\
=&2\|\mathfrak{P}^{-1}X\mathfrak{P}^{\perp}\|_{\rm F}^2-2\sum_{k\geq 3}\big<\Theta\Theta^{\tran}, \calS_{A,k}(X)\big>.
\end{align*}
Then,
\begin{align*}
{\rm dist}^2[(\hat U,&\hat V), (U,V)]-\EE\ {\rm dist}^2[(\hat U,\hat V), (U,V)]\\
=&\Big(2\|\mathfrak{P}^{-1}X\mathfrak{P}^{\perp}\|_{\rm F}^2-2\EE \|\mathfrak{P}^{-1}X\mathfrak{P}^{\perp}\|_{\rm F}^2\Big)
-2\sum_{k\geq 3}\big<\Theta\Theta^{\tran}, \calS_{A,k}(X)-\EE \calS_{A,k}(X)\big>.
\end{align*}
We investigate the normal approximation of 
$$
\frac{2\|\mathfrak{P}^{-1}X\mathfrak{P}^{\perp}\|_{\rm F}^2-2\EE\|\mathfrak{P}^{-1}X\mathfrak{P}^{\perp}\|_{\rm F}^2}{\sqrt{8(d_1+d_2-2r)}\cdot \|\Lambda^{-2}\|_{\rm F}}
$$
and show that 
$$
 \frac{2\sum_{k\geq 3}\big<\Theta\Theta^{\tran},\calS_{A,k}(X)-\EE\calS_{A,k}(X)\big>}{\sqrt{8(d_1+d_2-2r)}\cdot \|\Lambda^{-2}\|_{\rm F}}
$$
is ignorable when signal strength $\lambda_r$ is sufficiently strong.  For some $t>0$ which shall be determined later, define a function
\begin{equation}\label{eq:ftX_def}
f_{t}(X)=2\sum_{k\geq 3}\big<\Theta\Theta^{\tran}, \calS_{A,k}(X)\big> \cdot \phi\Big(\frac{\|X\|}{t\cdot \sqrt{d_{\max}}}\Big)
\end{equation}
where we view $X$ as a variable in $\RR^{(d_1+d_2)\times (d_1+d_2)}$ and the function $\phi(\cdot): \RR_+\mapsto \RR_+$ is defined by
$$
\phi(s):=\begin{cases}
1& {\rm if}\ s\leq 1,\\
2-s& {\rm if}\ 1<s\leq 2,\\
0&{\rm if}\ s>2.
\end{cases}
$$
Clearly, $\phi(s)$ is Lipschitz with constant $1$. 
 Lemma~\ref{lem:fX_Lip} shows that $f(\cdot)$ is  Lipschitz  when $\lambda_r\geq C_4\sqrt{d_{\max}}$. The proof of Lemma~\ref{lem:fX_Lip} is in Appendix, Section~\ref{sec:supp_lem}. 
\begin{lemma}\label{lem:fX_Lip}
There exist absolute constants $C_3, C_4>0$ so that if $\lambda_r\geq C_3t^2\sqrt{d_{\max}}$, then 
$$
\big|f_t(X_1)-f_t(X_2) \big|\leq C_4t^2\frac{rd_{\max}}{\lambda_r^3}\cdot \|X_1-X_2\|_{\rm F}
$$
where $f_t(X)$ is defined by (\ref{eq:ftX_def}). 
\end{lemma}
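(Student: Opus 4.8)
The plan is to estimate the increment $f_t(X_1)-f_t(X_2)$ directly, writing $f_t=g\cdot h$ with $g(X):=2\sum_{k\ge 3}\big\langle\Theta\Theta^{\tran},\calS_{A,k}(X)\big\rangle$ and $h(X):=\phi\big(\|X\|/(t\sqrt{d_{\max}})\big)$, and exploiting that $h$ vanishes outside the operator-norm ball $\calB:=\{X:\|X\|\le 2t\sqrt{d_{\max}}\}$. Since $\phi$ is $1$-Lipschitz and $\big|\|X_1\|-\|X_2\|\big|\le\|X_1-X_2\|\le\|X_1-X_2\|_{\rm F}$, we have $|h(X_1)-h(X_2)|\le (t\sqrt{d_{\max}})^{-1}\|X_1-X_2\|_{\rm F}$. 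I split according to whether $\|X_i\|\le 2t\sqrt{d_{\max}}$: if both exceed $2t\sqrt{d_{\max}}$ then $f_t(X_1)=f_t(X_2)=0$; if only one (say $X_2$) exceeds it, then $h(X_2)=0$, so $f_t(X_1)-f_t(X_2)=g(X_1)h(X_1)=g(X_1)\big(h(X_1)-h(X_2)\big)$; if both lie in $\calB$, then $f_t(X_1)-f_t(X_2)=g(X_1)\big(h(X_1)-h(X_2)\big)+h(X_2)\big(g(X_1)-g(X_2)\big)$. Thus everything reduces to (i) a pointwise bound $|g(X)|\lesssim r\,(t\sqrt{d_{\max}}/\lambda_r)^3$ for $X\in\calB$, and (ii) a Lipschitz bound $|g(X_1)-g(X_2)|\lesssim r\,(t\sqrt{d_{\max}})^{-1}(t\sqrt{d_{\max}}/\lambda_r)^3\|X_1-X_2\|_{\rm F}$ for $X_1,X_2\in\calB$; these combine to the claimed $C_4 t^2\,\tfrac{rd_{\max}}{\lambda_r^3}\|X_1-X_2\|_{\rm F}$.

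For (i), note $|\langle\Theta\Theta^{\tran},M\rangle|=|\tr(\Theta^{\tran}M\Theta)|\le 2r\,\|M\|$ because $\Theta^{\tran}M\Theta$ is a $2r\times2r$ matrix with $\|\Theta^{\tran}M\Theta\|\le\|M\|$; this is the step that keeps the $r$-dependence linear rather than importing a spurious $\sqrt{d_{\max}}$ through a Frobenius bound. Combined with $\|\calS_{A,k}(X)\|\le(4\|X\|/\lambda_r)^k$ from the remark after Theorem~\ref{thm:hat_Theta}, this gives $|g(X)|\le 4r\sum_{k\ge 3}(4\|X\|/\lambda_r)^k$. Choosing $C_3$ large enough, the hypothesis $\lambda_r\ge C_3t^2\sqrt{d_{\max}}$ forces $4\|X\|/\lambda_r\le 8t\sqrt{d_{\max}}/\lambda_r\le \tfrac12$ on $\calB$, so the series is geometric with leading term $k=3$ and $|g(X)|\lesssim r(8t\sqrt{d_{\max}}/\lambda_r)^3$, which with the bound on $|h(X_1)-h(X_2)|$ already handles the terms involving $h(X_1)-h(X_2)$ in all cases.

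For (ii), expand each $\calS_{A,k}$ by its definition (\ref{eq:calS_def}) as a sum of at most $\binom{2k}{k}$ products $\mathfrak{P}^{-s_1}X\mathfrak{P}^{-s_2}X\cdots X\mathfrak{P}^{-s_{k+1}}$ with $s_1+\cdots+s_{k+1}=k$, each homogeneous of degree $k$ in $X$, and telescope: $\calS_{A,k}(X_1)-\calS_{A,k}(X_2)$ is a sum of $k\binom{2k}{k}$ terms in which exactly one factor $X$ is replaced by $X_1-X_2$ and the remaining $k-1$ factors are $X_1$ or $X_2$. Since $\|\mathfrak{P}^{-s}\|\le\lambda_r^{-s}$ for $s\ge1$, $\|\mathfrak{P}^{0}\|=1$, and $s_1+\cdots+s_{k+1}=k$, each such term has operator norm at most $\lambda_r^{-k}(2t\sqrt{d_{\max}})^{k-1}\|X_1-X_2\|$, hence $|\langle\Theta\Theta^{\tran},\calS_{A,k}(X_1)-\calS_{A,k}(X_2)\rangle|\le 2r\,k\binom{2k}{k}\lambda_r^{-k}(2t\sqrt{d_{\max}})^{k-1}\|X_1-X_2\|_{\rm F}$. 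Summing over $k\ge 3$, using $\binom{2k}{k}\le 4^k$ and $8t\sqrt{d_{\max}}/\lambda_r\le\tfrac12$, the resulting series is dominated by its $k=3$ term, giving (ii); collecting the three cases then yields the lemma.

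I expect the main obstacle to be the bookkeeping in the telescoping step: one must verify that the combinatorial growth $k\binom{2k}{k}\le k\,4^k$ is absorbed by the factor $(8t\sqrt{d_{\max}}/\lambda_r)^k$ under the SNR hypothesis (which is where the constant $C_3$ is calibrated), and one must consistently route every inner product through the $2r$-dimensional compression $\Theta^{\tran}(\cdot)\Theta$ so that the final constant stays linear in $r$ and does not pick up an extra $\sqrt{d_{\max}}$.
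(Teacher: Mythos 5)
Your proposal is correct and follows essentially the same route as the paper: the same case split on the cutoff ball $\{\|X\|\le 2t\sqrt{d_{\max}}\}$, the Lipschitz property of $\phi$, the bound $|\langle\Theta\Theta^{\tran},\cdot\rangle|\le 2r\|\cdot\|$, and a term-by-term telescoping of the monomials in $\calS_{A,k}$ summed as a geometric series under the SNR condition. The only difference is cosmetic (you organize the increment via the product rule $g\cdot h$, while the paper bounds the differences $\calS_{A,k}(X_1)\phi_1-\calS_{A,k}(X_2)\phi_2$ directly), so no further comment is needed.
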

By Lemma~\ref{lem:fX_Lip} and Gaussian isoperimetric inequality (see, e.g., \citep{koltchinskii2016asymptotics, koltchinskii2017normal}), it holds with probability at least $1-e^{-s}$ for any $s\geq 1$ that
\begin{align}
\bigg|2\sum_{k\geq 3}\big<\Theta\Theta^{\tran}, \calS_{A,k}(X)\big> \cdot \phi\Big(\frac{\|X\|}{t\cdot \sqrt{d_{\max}}}\Big)&-\EE2\sum_{k\geq 3}\big<\Theta\Theta^{\tran}, \calS_{A,k}(X)\big> \cdot \phi\Big(\frac{\|X\|}{t\cdot \sqrt{d_{\max}}}\Big)\bigg|\nonumber\\
\leq C_5\sqrt{s}t^2\frac{rd_{\max}}{\lambda_r^3}\label{eq:Theta-E}
\end{align}
for some absolute constant $C_5>0$. Now, set $t=C_2$ where $C_2$ is defined in (\ref{eq:Z_norm}). Therefore,  $\phi\Big(\frac{\|X\|}{C_2\cdot\sqrt{d_{\max}}}\Big)=1$ on event $\calE_1$. Meanwhile, the following fact holds
\begin{align*}
\bigg|\EE2\sum_{k\geq 3}\big<\Theta\Theta^{\tran}, \calS_{A,k}(X)\big> \cdot &\phi\Big(\frac{\|X\|}{C_2\cdot \sqrt{d_{\max}}}\Big)-\EE2\sum_{k\geq 3}\big<\Theta\Theta^{\tran}, \calS_{A,k}(X)\big>\bigg|\\
\leq&\bigg|\EE2\sum_{k\geq 3}\big<\Theta\Theta^{\tran}, \calS_{A,k}(X)\big> \cdot \phi\Big(\frac{\|X\|}{C_2\cdot \sqrt{d_{\max}}}\Big)\mathbb{I}_{\calE_1^{\rm c}}\bigg|\\
&\quad +\bigg|\EE2\sum_{k\geq 3}\big<\Theta\Theta^{\tran}, \calS_{A,k}(X)\big>\mathbb{I}_{\calE_1^{\rm c}}\bigg|\\
\leq&4\sum_{k\geq 3}\EE \big|\big<\Theta\Theta^{\tran}, \calS_{A,k}(X)\big>\big|\mathbb{I}_{\calE_1^{\rm c}}\leq 8r\sum_{k\geq 3}\EE^{1/2}\|\calS_{A,k}(X)\|^2\cdot e^{-c_1d_{\max}/2}\\
\leq&e^{-c_1d_{\max}/2}\cdot 8r\sum_{k\geq 3} \EE^{1/2}\frac{16^k\|X\|^{2k}}{\lambda_r^{2k}}\leq e^{-c_1d_{\max}/2}\cdot 8r\sum_{r\geq 3}\Big(\frac{C_6\cdot d_{\max}^{1/2}}{\lambda_r}\Big)^k\\
\leq &e^{-c_1d_{\max}/2}\cdot \frac{C_6 rd_{\max}^{3/2}}{\lambda_r^3}\leq C_6\frac{rd_{\max}}{\lambda_r^3}
\end{align*}
where the last inequality holds as long as $e^{-c_1d_{\max}/2}\leq \frac{1}{\sqrt{d_{\max}}}$ and we used the fact $\EE^{1/p}\|X\|^p\leq C_6 \sqrt{d_{\max}}$ for some absolute constant $C_6>0$ and any positive integer $p$. (See, e.g., \citep{koltchinskii2016perturbation}, \citep{vershynin2010introduction} and \citep{tao2012topics}). Together with (\ref{eq:Theta-E}), it holds  with probability at least $1-e^{-s}-e^{-c_1d_{\max}}$ for any $s\geq 1$ that
$$
\Big|2\sum_{k\geq 3}\big<\Theta\Theta^{\tran},\calS_{A,k}(X)\big>-\EE2\sum_{k\geq 3}\big<\Theta\Theta^{\tran},\calS_{A,k}(X)\big> \Big|\leq C_6s^{1/2}\cdot \frac{rd_{\max}}{\lambda_r^3}
$$
for some absolute constant $C_6>0$. 
Therefore,  for any $s\geq 1$, with probability at least $1-e^{-s}-e^{-c_1d_{\max}}$,
\begin{equation}\label{eq:Ethm_rem}
\frac{\Big|2\sum_{k\geq 3}\big<\Theta\Theta^{\tran},\calS_{A,k}(X)\big>-\EE2\sum_{k\geq 3}\big<\Theta\Theta^{\tran},\calS_{A,k}(X)\big> \Big|}{\sqrt{8(d_1+d_2-2r)}\|\Lambda^{-2}\|_{\rm F}}\leq C_6s^{1/2}\Big(\frac{\sqrt{r}}{\|\Lambda^{-2}\|_{\rm F}\lambda_r^2}\Big)\cdot \frac{\sqrt{rd_{\max}}}{\lambda_r}
\end{equation}
where we assumed $d_{\max}\geq 3r$. %Note that the bound in (\ref{eq:Ethm_rem}) tends to be ignorable if $\frac{rd_{\max}^{1/2}}{\lambda_r}=o(1)$.

We next prove the normal approximation of $2\|\mathfrak{P}^{-1}X\mathfrak{P}^{\perp}\|_{\rm F}^2$. Similar as in \citep{xia2018confidence}, by the definition of $\mathfrak{P}^{-1}, X$ and $\mathfrak{P}^{\perp}$, we could write
\begin{align*}
\mathfrak{P}^{-1}X\mathfrak{P}^{\perp}=\left(\begin{array}{cc}U\Lambda^{-1}V^{\tran}Z^{\tran}U_{\perp}U_{\perp}^{\tran}&0\\
0&V\Lambda^{-1}U^{\tran}ZV_{\perp}V_{\perp}^{\tran}\end{array}\right).
\end{align*}
Then,
\begin{align*}
\|\mathfrak{P}^{-1}X\mathfrak{P}^{\perp}\|_{\rm F}^2=&\|U\Lambda^{-1}V^{\tran}Z^{\tran}U_{\perp}U_{\perp}^{\tran}\|_{\rm F}^2
+\|V\Lambda^{-1}U^{\tran}ZV_{\perp}V_{\perp}^{\tran}\|_{\rm F}^2.
\end{align*}
Denote $z_j\in\RR^{d_1}$ the $j$-th column of $Z$ for $1\leq j\leq d_2$. Then, $z_1,\cdots,z_{d_2}$ are independent Gaussian random vector and $\EE z_jz_j^{\tran}=I_{d_1}$ for all $j$. Therefore, 
$$
U^{\tran}Z=\sum_{j=1}^{d_2}(U^{\tran}z_j)e_{j}^{\tran}
$$
where $\{e_j\}_{j=1}^{d_2}$ represent the standard basis vectors in $\RR^{d_2}$. Similarly, 
$$
U_{\perp}^{\tran}Z=\sum_{j=1}^{d_2}(U_{\perp}^{\tran}z_j)e_j^{\tran}.
$$
Sincet $U^{\tran}z_j$ and $U_{\perp}^{\tran}z_j$ are Gaussian random vectors and 
$$
\EE U^{\tran}z_j\big(U_{\perp}^{\tran}z_j\big)^{\tran}=U^{\tran}U_{\perp}=0
$$
, we know that $\{U^{\tran}z_j\}_{j=1}^{d_2}$ are independent with $\{U_{\perp}^{\tran}z_j\}_{j=1}^{d_2}$ . Therefore, $\|U\Lambda^{-1}V^{\tran}Z^{\tran}U_{\perp}U_{\perp}^{\tran}\|_{\rm F}^2$ is independent with $\|V\Lambda^{-1}U^{\tran}ZV_{\perp}V_{\perp}^{\tran}\|_{\rm F}^2$. Denote by $\tilde Z$ an independent copy of $Z$, we conclude that ($Y_1\stackrel{{\rm d}}{=}Y_2$ denotes equivalence of $Y_1$ and $Y_2$ in distribution)
\begin{align*}
\|\mathfrak{P}^{-1}X\mathfrak{P}^{\perp}\|_{\rm F}^2\stackrel{{\rm d}}{=}&\|U\Lambda^{-1}V^{\tran}Z^{\tran}U_{\perp}U_{\perp}^{\tran}\|_{\rm F}^2
+\|V\Lambda^{-1}U^{\tran}\tilde{Z}V_{\perp}V_{\perp}^{\tran}\|_{\rm F}^2\\
=&\sum_{j=r+1}^{d_1}\|U\Lambda^{-1}V^{\tran}Z^{\tran}u_j\|_{\ell_2}^2+\sum_{j=r+1}^{d_2}\|V\Lambda^{-1}U^{\tran}\tilde Zv_j\|_{\ell_2}^2\\
=&\sum_{j=r+1}^{d_1}\|\Lambda^{-1}V^{\tran}Z^{\tran}u_j\|_{\ell_2}^2+\sum_{j=r+1}^{d_2}\|\Lambda^{-1}U^{\tran}\tilde Zv_j\|_{\ell_2}^2
\end{align*}
where $\{u_j\}_{j=r+1}^{d_1}$ and $\{v_j\}_{j=r+1}^{d_2}$ denote the columns of $U_{\perp}$ and $V_{\perp}$, respectively. Observe that $Z^{\tran}u_j\sim\calN(0,I_{d_2})$ for all $r+1\leq j\leq d_1$ and 
$$
\EE (Z^{\tran}u_{j_1})(Z^{\tran}u_{j_2})^{\tran}=0\quad \textrm{ for all } r+1\leq j_1\neq j_2\leq d_1.
$$
Therefore, $\{Z^{\tran}u_j\}_{j=r+1}^{d_1}$ are independent normal random vectors. Similarly, $\tilde Z v_j\sim\calN(0,I_{d_1})$ are independent for all $r+1\leq j\leq d_2$. Clearly,  $V^{\tran}Z^{\tran}u_{j_1}\sim\calN(0,I_r)$ and $U^{\tran}\tilde Z v_{j_2}\sim\calN(0,I_r)$ are all independent for $r+1\leq j_1\leq d_1$ and $r+1\leq j_2\leq d_2$. 

As a result, let $d_{\star}=d_1+d_2-2r$, we conclude that 
\begin{equation}\label{eq:P-1XPperp}
\|\mathfrak{P}^{-1}X\mathfrak{P}^{\perp}\|_{\rm F}^2\stackrel{\rm d}{=}\sum_{j=1}^{d_{\star}}\|\Lambda^{-1}z_j\|_{\ell_2}^2
\end{equation}
where we abuse the notations and denote $\{z_j\}_{j=1}^{d_{\star}}$ where $z_j\stackrel{i.i.d.}{\sim}\calN(0,I_r)$. By Berry-Esseen theorem (\citep{berry1941accuracy} and \citep{esseen1942liapunoff}), it holds for some absolute constant $C_7>0$ that
\begin{equation}
\sup_{x\in\RR}\bigg|\PP\bigg(\frac{2\|\mathfrak{P}^{-1}X\mathfrak{P}^{\perp}\|_{\rm F}^2-2\EE \|\mathfrak{P}^{-1}X\mathfrak{P}^{\perp}\|_{\rm F}^2}{\sqrt{8(d_1+d_2-2r)}\|\Lambda^{-2}\|_{\rm F}}\leq x\bigg) -\Phi(x) \bigg|\leq C_7\Big(\frac{\|\Lambda^{-1}\|_{\rm F}^4}{\|\Lambda^{-2}\|_{\rm F}^2}\Big)^{3/2}\cdot \frac{1}{\sqrt{d_{\star}}}\label{eq:normal_part}
\end{equation}
where we used the fact ${\rm Var}\big(\|\Lambda^{-1}z_j\|_{\ell_2}^2\big)=2\|\Lambda^{-2}\|_{\rm F}^2$ and 
$$
\EE\big\|\Lambda^{-1}z_j\big\|_{\ell_2}^6\leq C_7\sum_{j_1,j_2,j_3\geq 1}^r \frac{1}{\lambda_{j_1}^2\lambda_{j_2}^2\lambda_{j_3}^2}\leq C_7\|\Lambda^{-1}\|_{\rm F}^6. 
$$
In (\ref{eq:normal_part}), the function $\Phi(x)$ denotes the c.d.f. of  standard normal distributions. Recall that, on event $\calE_1$, 
\begin{align*}
\frac{{\rm dist}^2[(\hat U,\hat V), (U,V)]-\EE\ {\rm dist}^2[(\hat U,\hat V), (U,V)]}{\sqrt{8(d_1+d_2-2r)}\|\Lambda^{-2}\|_{\rm F}}=&\frac{2\|\mathfrak{P}^{-1}X\mathfrak{P}^{\perp}\|_{\rm F}^2-2\EE \|\mathfrak{P}^{-1}X\mathfrak{P}^{\perp}\|_{\rm F}^2}{\sqrt{8(d_1+d_2-2r)}\|\Lambda^{-2}\|_{\rm F}}\\
&\quad +\frac{2\sum_{k\geq 3}\big<\Theta\Theta^{\tran}, \calS_{A,k}(X)-\EE\calS_{A,k}(X)\big>}{\sqrt{8(d_1+d_2-2r)}\|\Lambda^{-2}\|_{\rm F}}
\end{align*}
where normal approximation of the first term is given in (\ref{eq:normal_part}) and upper bound of the second term is given in (\ref{eq:Ethm_rem}). Based on (\ref{eq:Ethm_rem}), we get for any $x\in\RR$ and any $s\geq 1$, 
\begin{align*}
\PP\bigg(&\frac{{\rm dist}^2[(\hat U,\hat V), (U,V)]-\EE\ {\rm dist}^2[(\hat U,\hat V), (U,V)]}{\sqrt{8(d_1+d_2-2r)}\|\Lambda^{-2}\|_{\rm F}}\leq x\bigg)\\
&\quad\quad\quad \leq\PP\bigg(\frac{2\|\mathfrak{P}^{-1}X\mathfrak{P}^{\perp}\|_{\rm F}^2-2\EE \|\mathfrak{P}^{-1}X\mathfrak{P}^{\perp}\|_{\rm F}^2}{\sqrt{8(d_1+d_2-2r)}\|\Lambda^{-2}\|_{\rm F}}\leq x+C_6s^{1/2}\cdot \frac{\sqrt{r}}{\|\Lambda^{-2}\|_{\rm F}\lambda_r^2}\cdot \frac{\sqrt{rd_{\max}}}{\lambda_r}\bigg)\\
&\quad\quad\quad\quad\quad + e^{-s}+e^{-c_1d_{\max}}\\
 \leq& \Phi\bigg(x+C_6s^{1/2}\cdot \frac{\sqrt{r}}{\|\Lambda^{-2}\|_{\rm F}\lambda_r^2}\cdot \frac{\sqrt{rd_{\max}}}{\lambda_r}\bigg)+ e^{-s}+e^{-c_1d_{\max}}+C_7\Big(\frac{\|\Lambda^{-1}\|_{\rm F}^4}{\|\Lambda^{-2}\|_{\rm F}^2}\Big)^{3/2}\cdot \frac{1}{\sqrt{d_{\star}}}\\
\leq& \Phi(x)+C_6s^{1/2}\cdot \frac{\sqrt{r}}{\|\Lambda^{-2}\|_{\rm F}\lambda_r^2}\cdot \frac{\sqrt{rd_{\max}}}{\lambda_r}+e^{-s}+e^{-c_1d_{\max}}+C_7\Big(\frac{\|\Lambda^{-1}\|_{\rm F}^4}{\|\Lambda^{-2}\|_{\rm F}^2}\Big)^{3/2}\cdot \frac{1}{\sqrt{d_{\star}}}
\end{align*}
where the last inequality is due to (\ref{eq:normal_part}) and the Lipschitz property of $\Phi(x)$. Similarly, for any $x\in\RR$ and any $s\geq 1$,
\begin{align*}
\PP\bigg(&\frac{{\rm dist}^2[(\hat U,\hat V), (U,V)]-\EE\ {\rm dist}^2[(\hat U,\hat V), (U,V)]}{\sqrt{8(d_1+d_2-2r)}\|\Lambda^{-2}\|_{\rm F}}\leq x\bigg)\\
&\quad\quad\quad \geq\PP\bigg(\frac{2\|\mathfrak{P}^{-1}X\mathfrak{P}^{\perp}\|_{\rm F}^2-2\EE \|\mathfrak{P}^{-1}X\mathfrak{P}^{\perp}\|_{\rm F}^2}{\sqrt{8(d_1+d_2-2r)}\|\Lambda^{-2}\|_{\rm F}}\leq x-C_6s^{1/2}\cdot \frac{\sqrt{r}}{\|\Lambda^{-2}\|_{\rm F}\lambda_r^2}\cdot \frac{\sqrt{rd_{\max}}}{\lambda_r}\bigg)\\
&\quad\quad\quad\quad\quad - e^{-s}-e^{-c_1d_{\max}}\\
 \geq& \Phi\bigg(x-C_6s^{1/2}\cdot \frac{\sqrt{r}}{\|\Lambda^{-2}\|_{\rm F}\lambda_r^2}\cdot \frac{\sqrt{rd_{\max}}}{\lambda_r}\bigg)- e^{-s}-e^{-c_1d_{\max}}-C_7\Big(\frac{\|\Lambda^{-1}\|_{\rm F}^4}{\|\Lambda^{-2}\|_{\rm F}^2}\Big)^{3/2}\cdot \frac{1}{\sqrt{d_{\star}}}\\
\geq& \Phi(x)-C_6s^{1/2}\cdot \frac{\sqrt{r}}{\|\Lambda^{-2}\|_{\rm F}\lambda_r^2}\cdot \frac{\sqrt{rd_{\max}}}{\lambda_r}-e^{-s}-e^{-c_1d_{\max}}-C_7\Big(\frac{\|\Lambda^{-1}\|_{\rm F}^4}{\|\Lambda^{-2}\|_{\rm F}^2}\Big)^{3/2}\cdot \frac{1}{\sqrt{d_{\star}}}.
\end{align*}
Finally, we conclude that for any $s\geq 1$,
\begin{align*}
\sup_{x\in\RR}\bigg|\PP\bigg(&\frac{{\rm dist}^2[(\hat U,\hat V), (U,V)]-\EE\ {\rm dist}^2[(\hat U,\hat V), (U,V)]}{\sqrt{8(d_1+d_2-2r)}\|\Lambda^{-2}\|_{\rm F}}\leq x\bigg)-\Phi(x) \bigg|\\
& \leq C_6s^{1/2}\Big(\frac{\sqrt{r}}{\|\Lambda^{-2}\|_{\rm F}\lambda_r^2}\Big)\cdot \frac{\sqrt{rd_{\max}}}{\lambda_r}+e^{-s}+e^{-c_1d_{\max}}+C_7\Big(\frac{\|\Lambda^{-1}\|_{\rm F}^4}{\|\Lambda^{-2}\|_{\rm F}^2}\Big)^{3/2}\cdot \frac{1}{\sqrt{d_{\star}}}
\end{align*}
where $d_{\star}=d_1+d_2-2r$ and $C_6, C_7,c_1$ are absolute positive constants.

\subsection{Proof of lemmas in Section~\ref{sec:approx_Edist} }
Observe that $\calS_{A,k}(X)$ involves the product of $X$ for $k$ times. If $k$ is odd, we immediately get $\EE\calS_{A,k}(X)=0$ since $Z$ has i.i.d. standard normal entries. Therefore, it suffices to investigate $\EE\big<\Theta\Theta^{\tran}, \calS_{A,k}(X)\big>$ when $k$ is even. 

\paragraph{Proof of Lemma~\ref{lem:first_order_approx}.}
By the definitions of $\mathfrak{P}^{\perp}, X$ and $\mathfrak{P}^{-1}$, 
\begin{align*}
\EE\|\mathfrak{P}^{\perp}X\mathfrak{P}^{-1}\|_{\rm F}^2=&\EE\|U\Lambda^{-1}V^{\tran}Z^{\tran}U_{\perp}U_{\perp}^{\tran}\|_{\rm F}^2+\EE\|V\Lambda^{-1}U^{\tran}ZV_{\perp}V_{\perp}^{\tran}\|_{\rm F}^2\\
=&\EE\|\Lambda^{-1}V^{\tran}Z^{\tran}U_{\perp}\|_{\rm F}^2+\EE\|\Lambda^{-1}U^{\tran}ZV_{\perp}\|_{\rm F}^2.
\end{align*}
By the proof of Theorem~\ref{thm:distE_normal}, we obtain $\EE\|\mathfrak{P}^{\perp}X\mathfrak{P}^{-1}\|_{\rm F}^2=(d_1+d_2-2r)\|\Lambda^{-1}\|_{\rm F}^2$ which is the first claim. To prove the second claim, it holds by Theorem~\ref{thm:hat_Theta} that
\begin{align*}
\Big|\EE\|\hat\Theta\hat\Theta^{\tran}-&\Theta\Theta^{\tran}\|_{\rm F}^2-2d_{\star}\|\Lambda^{-1}\|_{\rm F}^2 \Big|\leq 2\Big|\sum_{k\geq 2}\EE\big<\Theta\Theta^{\tran}, \calS_{A,2k}(X)\big>\Big|\\
\leq&2\sum_{k\geq 2}\bigg|\EE\bigg<\Theta\Theta^{\tran}, \sum_{\bs:s_1+\cdots+s_{2k+1}=2k}(-1)^{1+\tau(\bs)}\cdot\mathfrak{P}^{-s_1}X\mathfrak{P}^{-s_2}X\cdots X\mathfrak{P}^{-s_{2k}}X\mathfrak{P}^{-s_{2k+1}}\bigg>\bigg|\\
=&2\sum_{k\geq 2}\bigg|\EE\bigg<\Theta\Theta^{\tran}, \sum_{\substack{\bs:s_1+\cdots+s_{2k+1}=2k\\ s_1,s_{2k+1}\geq 1}}(-1)^{1+\tau(\bs)}\cdot\mathfrak{P}^{-s_1}X\mathfrak{P}^{-s_2}X\cdots X\mathfrak{P}^{-s_{2k}}X\mathfrak{P}^{-s_{2k+1}}\bigg>\bigg|
\end{align*}
where we used the fact $\Theta\Theta^{\tran}\mathfrak{P}^0=\mathfrak{P}^0\Theta\Theta^{\tran}=0$.  Then, 
\begin{align*}
\Big|\EE\|\hat\Theta\hat\Theta^{\tran}-\Theta\Theta^{\tran}\|_{\rm F}^2-&2d_{\star}\|\Lambda^{-1}\|_{\rm F}^2 \Big|\\
\leq&4r\sum_{k\geq 2}\sum_{{\substack{\bs:s_1+\cdots+s_{2k+1}=2k\\ s_1,s_{2k+1}\geq 1}}}\EE\Big\|\mathfrak{P}^{-s_1}X\mathfrak{P}^{-s_2}X\cdots X\mathfrak{P}^{-s_{2k}}X\mathfrak{P}^{-s_{2k+1}} \Big\|\\
\leq&4r\sum_{k\geq 2}\sum_{{\substack{\bs:s_1+\cdots+s_{2k+1}=2k\\ s_1,s_{2k+1}\geq 1}}} \frac{\EE\|X\|^{2k}}{\lambda_r^{2k}}\\
\leq&4r\sum_{k\geq 2}{4k\choose 2k} \frac{\EE\|X\|^{2k}}{\lambda_r^{2k}}\leq C_2r\sum_{k\geq 2}\frac{4^{2k}\EE\|X\|^{2k}}{\lambda_r^{2k}}.
\end{align*}
for some absolute constant $C_2>0$. %As in the proof of Theorem~\ref{thm:hat_Theta} (see also \cite[Lemma~3]{koltchinskii2016perturbation}), there exists an absolute constant $C_1>0$ such that 
%$$
%\EE \|X\|^p \leq C_1^pd_{\max}^{p/2}
%$$
%for all positive integer $p\geq 1$. 
Therefore, 
\begin{align*}
\Big|\EE\|\hat\Theta\hat\Theta^{\tran}-\Theta\Theta^{\tran}\|_{\rm F}^2-&2d_{\star}\|\Lambda^{-1}\|_{\rm F}^2 \Big|
\leq C_2r\sum_{k\geq 2}\Big(\frac{16C_1^2d_{\max}}{\lambda_r^2}\Big)^{k}\leq C_2\frac{rd_{\max}^2}{\lambda_r^4}
\end{align*}
where the last inequality holds as long as $\lambda_r\geq 5C_1\sqrt{d_{\max}}$. 

\paragraph{Property 1: only even order terms matter.}
In order to calculate higher order approximations, we need the following useful property of $\EE\calS_{2k}(X)$. 

By Theorem~\ref{thm:hat_Theta}, 
\begin{align*}
\big<\Theta\Theta^{\tran}, \calS_{A,2k}(X)\big>=\sum_{\substack{\bs: s_1+\cdots+s_{2k+1}=2k\\ s_1,s_{2k+1}\geq 1}}(-1)^{1+\tau(\bs)}\cdot \tr\Big(\mathfrak{P}^{-s_1}X\cdots X\mathfrak{P}^{-s_{2k+1}}\Big).
\end{align*}
For any $\tau(\bs)=\tau\geq 2$, there exists positive integers $s_{j_1}, s_{j_2}, \cdots, s_{j_\tau}$ and positive integers $t_1,t_2,\cdots, t_{\tau-1}$ so that we can write
$$
\mathfrak{P}^{-s_1}X\cdots X\mathfrak{P}^{-s_{2k+1}}=\mathfrak{P}^{-s_{j_1}}\underbrace{X\mathfrak{P}^{\perp}\cdots \mathfrak{P}^{\perp}X}_{t_1\ {\rm of}\ X}\mathfrak{P}^{-s_{j_2}}\cdots\mathfrak{P}^{-s_{j_{\tau-1}}}\underbrace{X\mathfrak{P}^{\perp}\cdots \mathfrak{P}^{\perp}X}_{t_{\tau-1}\ {\rm of}\ X}\mathfrak{P}^{-s_{j_{\tau}}}
$$
where 
$$
s_{j_1}+\cdots+s_{j_{\tau}}=2k\quad {\rm and}\quad t_1+\cdots+t_{\tau-1}=2k.
$$
Therefore, for positive integers $s_1,\cdots, s_{2k+1}, t_1,\cdots,t_{2k}\geq 1$, 
\begin{align*}
\big<\Theta\Theta^{\tran}, \EE\calS_{A,2k}(X)\big>=\sum_{\tau\geq 2}(-1)^{1+\tau}\sum_{s_1+\cdots+s_\tau=2k}\sum_{t_1+\cdots+t_{\tau-1}=2k}\EE\tr\big(Q_{t_1t_2\cdots t_{\tau-1}}^{(s_1s_2\cdots s_\tau)}\big)
\end{align*}
where the matrix $Q_{t_1t_2\cdots t_{\tau-1}}^{(s_1s_2\cdots s_\tau)}$ is defined by 
\begin{equation}
Q_{t_1t_2\cdots t_{\tau-1}}^{(s_1s_2\cdots s_\tau)}=\mathfrak{P}^{-s_{1}}\underbrace{X\mathfrak{P}^{\perp}\cdots \mathfrak{P}^{\perp}X}_{t_1\ {\rm of}\ X}\mathfrak{P}^{-s_{2}}\cdots\mathfrak{P}^{-s_{{\tau-1}}}\underbrace{X\mathfrak{P}^{\perp}\cdots \mathfrak{P}^{\perp}X}_{t_{\tau-1}\ {\rm of}\ X}\mathfrak{P}^{-s_{{\tau}}}.
\label{eq:matrixQ}
\end{equation}
{\it Case 1}: if any of $t_1,t_2,\cdots,t_{\tau-1}$ equals one. W.L.O.G., let $t_1=1$. Then, $Q_{t_1t_2\cdots t_{\tau-1}}^{(s_1s_2\cdots s_{\tau})}$ involves the product of $\mathfrak{P}^{-s_{1}}X\mathfrak{P}^{-s_{2}}$. Then, 
\begin{align*}
\big|\EE\tr\big( &Q_{t_1t_2\cdots t_{\tau-1}}^{(s_1s_2\cdots s_\tau)}\big) \big|
\leq\sqrt{2r}\cdot\EE \|\mathfrak{P}^{-s_{1}}X\mathfrak{P}^{-s_{2}}\|_{\rm F}\cdot \frac{\|X\|^{2k-1}}{\lambda_r^{2k-s_{1}-s_{2}}}\\
\leq& \sqrt{2r}\cdot\EE \|\Theta\Theta^{\tran}X\Theta\Theta^{\tran}\|_{\rm F}\cdot \frac{\|X\|^{2k-1}}{\lambda_r^{2k}}\\
\leq&\frac{\sqrt{2r}}{\lambda_r^{2k}}\cdot \EE  \|\Theta\Theta^{\tran}X\Theta\Theta^{\tran}\|_{\rm F} \|X\|^{2k-1}\\
\leq& \frac{\sqrt{2r}}{\lambda_r^{2k}}\cdot\EE^{1/2}\|\Theta\Theta^{\tran}X\Theta\Theta^{\tran}\|^2_{\rm F}\EE^{1/2}\|X\|^{4k-2}\\
\leq&C_1\frac{C_2^{k-1}r^{3/2}d_{\max}^{k-\frac{1}{2}}}{\lambda_r^{2k}}
\end{align*}
where we used the fact 
$
\Theta\Theta^{\tran}X\Theta\Theta^{\tran}=\left(\begin{array}{cc}0&UU^{\tran}ZVV^{\tran}\\ VV^{\tran}Z^{\tran}UU^{\tran}&0\end{array}\right)
$
which is of rank at most $2r$ and $\EE^{1/2}\|U^{\tran}ZV\|^2_{\rm F}=O(r)$. We also used the fact $\EE^{1/p}\|X\|^{p}\leq C_2\sqrt{d_{\max}}$ for some absolute constant $C_2>0$ and all positive integers $p\geq 1$. Therefore, if any of $t_1,\cdots, t_{\tau-1}$ equals one, then the magnitude of $\big|\EE\tr\big(Q^{(s_1s_2\cdots s_{\tau})}_{t_1t_2\cdots t_{\tau-1}}\big) \big|$ is of the order $O\Big(\frac{r^{3/2}}{\sqrt{d_{\max}}}\cdot\frac{C_2^kd_{\max}^{k}}{\lambda_r^{2k}}\Big)$. 
\\
{\it Case 2}: if any of $t_1,\cdots,t_{\tau-1}$ is an odd number greater than $1$. W.L.O.G., let $t_1$ be an odd number and $t_1\geq 3$. More specifically, let $t_1=2p+3$ for some non-negative integer $p\geq 0$. Then, 
\begin{align*}
\big|\EE\big<\Theta\Theta^{\tran},& Q_{t_1t_2\cdots t_{\tau-1}}^{(s_1s_2\cdots s_{\tau})}\big>\big|\\
\leq&\Big|\tr\Big(\mathfrak{P}^{-s_1}X\big(\mathfrak{P}^{\perp}X\big)^{t_1-1}\mathfrak{P}^{-s_{2}}X\big(\mathfrak{P}^{\perp}X\big)^{t_2-1}\mathfrak{P}^{-s_{3}}X\cdots \mathfrak{P}^{-s_{{\tau-1}}}X\big(\mathfrak{P}^{\perp}X\big)^{t_{\tau-1}-1}\mathfrak{P}^{-s_{\tau}}\Big)\Big|\\
\leq&\EE\Big\|\mathfrak{P}^{-s_{1}}X(\mathfrak{P}^{\perp}X\mathfrak{P}^{\perp})^{2p+1}X\mathfrak{P}^{-s_{2}} \Big\|_{\rm F}\cdot \frac{\sqrt{2r}\|X\|^{2k-t_1}}{\lambda_r^{2k-s_{1}-s_{2}}}\\
\leq&\EE\Big\|\mathfrak{P}^{-s_{1}}X(\mathfrak{P}^{\perp}X\mathfrak{P}^{\perp})^{2p+1}X\mathfrak{P}^{-s_{2}} \Big\|_{\rm F}\cdot \frac{\sqrt{2r}\|X\|^{2k-t_1}}{\lambda_r^{2k-s_{1}-s_{2}}}\mathbb{I}_{\calE_1}\\
&\quad\quad\quad +\EE\Big\|\mathfrak{P}^{-s_{1}}X(\mathfrak{P}^{\perp}X\mathfrak{P}^{\perp})^{2p+1}X\mathfrak{P}^{-s_{2}} \Big\|_{\rm F}\cdot \frac{\sqrt{2r}\|X\|^{2k-t_1}}{\lambda_r^{2k-s_{1}-s_{2}}}\mathbb{I}_{\calE_1^{\rm c}}
\end{align*}
where, as in the proof of Theorem~\ref{thm:distE_normal}, define the event $\calE_1=\{\|X\|\leq C_2\cdot \sqrt{d_{\max}}\}$ for some absolute constant $C_2>0$ such that $\PP(\calE_1)\geq 1-e^{-c_1d_{\max}}$.
As a result, we get 
\begin{align*}
\big|\EE\big<\Theta\Theta^{\tran},& Q_{t_1t_2\cdots t_{\tau-1}}^{(s_1s_2\cdots s_{\tau})}\big>\big|\\
\leq&\EE\Big\|\mathfrak{P}^{-s_1}X(\mathfrak{P}^{\perp}X\mathfrak{P}^{\perp})^{2p+1}X\mathfrak{P}^{-s_{2}} \Big\|_{\rm F}\cdot \frac{\sqrt{2r}d_{\max}^{(2k-t_1)/2}}{\lambda_r^{2k-s_{1}-s_{2}}}\mathbb{I}_{\calE_1}+C_2^{2k}\cdot\frac{rd_{\max}^k}{\lambda_r^{2k}}\cdot e^{-c_1d_{\max}}\\
\leq&\EE^{1/2}\Big\|\mathfrak{P}^{-s_{1}}X(\mathfrak{P}^{\perp}X\mathfrak{P}^{\perp})^{2p+1}X\mathfrak{P}^{-s_{2}} \Big\|_{\rm F}^2\cdot \frac{\sqrt{2r}d_{\max}^{(2k-t_1)/2}}{\lambda_r^{2k-s_{1}-s_{2}}}+C_2^{2k}\cdot\frac{rd_{\max}^k}{\lambda_r^{2k}}\cdot e^{-c_1d_{\max}}\\
\leq&\frac{C\sqrt{r}d_{\max}^{(2k-t_1)/2}}{\lambda_r^{2k}}\cdot\EE^{1/2}\Big\|\Theta^{\tran}X(\mathfrak{P}^{\perp}X\mathfrak{P}^{\perp})^{2p+1}X\Theta\Big\|_{\rm F}^2+C_2^{2k}\cdot\frac{rd_{\max}^k}{\lambda_r^{2k}}\cdot e^{-c_1d_{\max}}\
\end{align*}
where $\Theta=(\theta_1,\cdots,\theta_r, \theta_{-r},\cdots,\theta_{-1})\in \RR^{(d_1+d_2)\times (2r)}$. %We conclude that $C_2^{2k}\cdot\frac{rd_{\max}^k}{\lambda_r^{2k}}\cdot e^{-c_1d_{\max}}=O\Big(\frac{r}{\sqrt{d_{\max}}}\cdot \frac{d_{\max}^{k}}{C_1^{2k}\lambda_r^{2k}}\Big)$ as long as $d_{\max}$ is large enough. 
In addition, we can write 
\begin{align*}
\EE\Big\|\Theta^{\tran}X(\mathfrak{P}^{\perp}X\mathfrak{P}^{\perp})^{2p+1}X\Theta\Big\|_{\rm F}^2=\sum_{1\leq |j_1|, |j_2|\leq r}\EE\big(\theta_{j_1}^{\tran}X(\mathfrak{P}^{\perp}X\mathfrak{P}^{\perp})^{2p+1}X\theta_{j_2}\big)^2.
\end{align*}
Observe that, for any integer $p\geq 0$,
$$
(\mathfrak{P}^{\perp}X\mathfrak{P}^{\perp})^{2p}=\left(
\begin{array}{cc}
\big(U_{\perp}U_{\perp}^{\tran}ZV_{\perp}V_{\perp}^{\tran}Z^{\tran}U_{\perp}U_{\perp}^{\tran}\big)^p&0\\
0&\big(V_{\perp}V_{\perp}^{\tran}Z^{\tran}U_{\perp}U_{\perp}^{\tran}ZV_{\perp}V_{\perp}^{\tran}\big)^p
\end{array}
\right). 
$$
W.L.O.G, let $j_1,j_2\geq 1$. Then, we write 
\begin{align*}
\theta_{j_1}^{\tran}X(\mathfrak{P}^{\perp}X&\mathfrak{P}^{\perp})^{2p+1}X\theta_{j_2}=\frac{1}{2}v_{j_1}^{\tran}Z^{\tran}\big(U_{\perp}U_{\perp}^{\tran}ZV_{\perp}V_{\perp}^{\tran}Z^{\tran}U_{\perp}U_{\perp}^{\tran}\big)^pU_{\perp}U_{\perp}^{\tran}ZV_{\perp}V_{\perp}^{\tran}Z^{\tran}u_{j_2}\\
&+\frac{1}{2}u_{j_1}^{\tran}Z\big(V_{\perp}V_{\perp}^{\tran}Z^{\tran}U_{\perp}U_{\perp}^{\tran}ZV_{\perp}V_{\perp}^{\tran}\big)^pV_{\perp}V_{\perp}^{\tran}Z^{\tran}U_{\perp}U_{\perp}^{\tran}Zv_{j_2}
\end{align*}
and get  the simple bound
\begin{align*}
\EE\big(\theta_{j_1}^{\tran}X(&\mathfrak{P}^{\perp}X\mathfrak{P}^{\perp})^{2p+1}X\theta_{j_2}\big)^2\\
\leq&2^{-1}\EE\Big(v_{j_1}^{\tran}Z^{\tran}\big(U_{\perp}U_{\perp}^{\tran}ZV_{\perp}V_{\perp}^{\tran}Z^{\tran}U_{\perp}U_{\perp}^{\tran}\big)^pU_{\perp}U_{\perp}^{\tran}ZV_{\perp}V_{\perp}^{\tran}Z^{\tran}u_{j_2}\Big)^2\\
&\quad +2^{-1}\EE\Big(u_{j_1}^{\tran}Z\big(V_{\perp}V_{\perp}^{\tran}Z^{\tran}U_{\perp}U_{\perp}^{\tran}ZV_{\perp}V_{\perp}^{\tran}\big)^pV_{\perp}V_{\perp}^{\tran}Z^{\tran}U_{\perp}U_{\perp}^{\tran}Zv_{j_2}\Big)^2.
\end{align*}
Observe that $Zv_{j_1}$ is independent with $ZV_{\perp}$ and $Z^{\tran}u_{j_1}$ is independent with $Z^{\tran}U_{\perp}$. Therefore, 
\begin{align*}
\EE\big(\theta_{j_1}^{\tran}X(&\mathfrak{P}^{\perp}X\mathfrak{P}^{\perp})^{2p+1}X\theta_{j_2}\big)^2\\
\leq&2^{-1}\EE\Big\|\big(U_{\perp}U_{\perp}^{\tran}ZV_{\perp}V_{\perp}^{\tran}Z^{\tran}U_{\perp}U_{\perp}^{\tran}\big)^pU_{\perp}U_{\perp}^{\tran}ZV_{\perp}V_{\perp}^{\tran}Z^{\tran}u_{j_2} \Big\|_{\ell_2}^2\\
&\quad\quad +2^{-1}\EE\Big\|\big(V_{\perp}V_{\perp}^{\tran}Z^{\tran}U_{\perp}U_{\perp}^{\tran}ZV_{\perp}V_{\perp}^{\tran}\big)^pV_{\perp}V_{\perp}^{\tran}Z^{\tran}U_{\perp}U_{\perp}^{\tran}Zv_{j_2} \Big\|_{\ell_2}^2\\
\leq&2^{-1}\EE\Big\|\big(U_{\perp}U_{\perp}^{\tran}ZV_{\perp}V_{\perp}^{\tran}Z^{\tran}U_{\perp}U_{\perp}^{\tran}\big)^pU_{\perp}U_{\perp}^{\tran}ZV_{\perp}V_{\perp}^{\tran} \Big\|_{\ell_2}^2\\
&\quad\quad +2^{-1}\EE\Big\|\big(V_{\perp}V_{\perp}^{\tran}Z^{\tran}U_{\perp}U_{\perp}^{\tran}ZV_{\perp}V_{\perp}^{\tran}\big)^pV_{\perp}V_{\perp}^{\tran}Z^{\tran}U_{\perp}U_{\perp}^{\tran} \Big\|_{\ell_2}^2\\
&\quad\quad\quad\leq C_2^{4p+2}d_{\max}^{2p+1}=C_2^{4p+2}d_{\max}^{t_1-2},
\end{align*}
where the last inequality is due to the independence between $Z^{\tran}u_{j_2}$ and $Z^{\tran}U_{\perp}$, the independence between $Zv_{j_2}$ and $ZV_{\perp}$. 
We conclude that 
\begin{align*}
\big|\EE\big<\Theta\Theta^{\tran},& Q_{t_1t_2\cdots t_{\tau-1}}^{(s_1s_2\cdots s_{\tau})}\big>\big|\leq C_2^{2k}\cdot \frac{r^{3/2}d_{\max}^{k-1}}{\lambda_r^{2k}}+r\Big(\frac{C_2d_{\max}}{\lambda_r^2}\Big)^k\cdot e^{-c_1d_{\max}}\leq \frac{r^{3/2}}{d_{\max}}\cdot\Big(\frac{C_2d_{\max}}{\lambda_r^2}\Big)^k
\end{align*}
where $C_2>0$ is some absolute constant and the last inequality is due to $e^{-c_1d_{\max}}\leq d_{\max}^{-1}$.  
%Based on {\it Case 1} and {\it Case 2}, we conclude that, whenever any of $t_1,\cdots, t_{\tau-1}$ is an odd number, the term 
%$$
%\big|\EE\big<\Theta\Theta^{\tran}, Q_{t_1t_2\cdots t_{\tau-1}}^{(s_1s_2\cdots s_{\tau})}\big>\big|\leq \frac{r^{3/2}}{\sqrt{d_{\max}}}\cdot \Big(\frac{C_2d_{\max}}{\lambda_r^{2}}\Big)^k.
%$$
%Recall that the leading term of $\EE\big<\Theta\Theta^{\tran},\calS_{A,2k}(X)\big>$ is of the order $O\Big(\frac{d_{\max}^k}{\lambda_r^{2k}}\Big)$. Therefore, based on {\it Case 1} and {\it Case 2}, 

We now finalize the proof. If there exists one odd $t_i$, then there exists at least another $t_j$ which is also odd since the sum of $t_i$s is even. Following the same analysis, we conclude 
$$
\big|\EE\big<\Theta\Theta^{\tran}, Q_{t_1t_2\cdots t_{\tau-1}}^{(s_1s_2\cdots s_{\tau})}\big>\big|\leq \frac{r^2}{d_{\max}}\cdot\Big(\frac{C_2d_{\max}}{\lambda_r^2}\Big)^k
$$
whenever any of $t_1,\cdots, t_{\tau-1}$ is an odd number. Therefore, it suffices to consider the cases that all of $t_1,\cdots, t_{\tau-1}$ are even numbers.

\paragraph{Proof of Lemma~\ref{lem:second_order_approx}.} From the above analysis, to calculate $\EE\big<\Theta\Theta^{\tran},\calS_{A,4}(X)\big>$, it suffices to calculate 
$$
\sum_{\tau=2}^3(-1)^{1+\tau}\sum_{s_1+\cdots+s_{\tau}=4}\sum_{t_1+\cdots+t_{\tau-1}=4}\EE\big<\Theta\Theta^{\tran}, Q_{t_1t_2\cdots t_{\tau-1}}^{(s_1s_2\cdots s_{\tau})}\big>
$$
where $t_1,\cdots, t_{\tau-1}$ are positive even numbers and $s_1,\cdots,s_{\tau}$ are positive numbers. \\
{\it Case 1: $\tau=2$.} In this case, $t_1=4$ and $s_1+s_2=4$. Therefore, for any $s_1, s_2$ such that $s_1+s_2=4$, 
we shall calculate
\begin{align*}
Q^{(s_1s_2)}_{4}=&\mathfrak{P}^{-s_1}X(\mathfrak{P}^{\perp}X\mathfrak{P}^{\perp})^2X\mathfrak{P}^{-s_2}\\
=&\EE\tr\big(Q_{4}^{(s_1s_2)}\big)=\EE\tr\big(\Theta\Theta^{\tran}X(\mathfrak{P}^{\perp}X\mathfrak{P}^{\perp})^2X\Theta\Theta^{\tran}\mathfrak{P}^{-4}\big).
\end{align*}
Clearly, we have
\begin{align*}
\Theta\Theta^{\tran}X&(\mathfrak{P}^{\perp}X\mathfrak{P}^{\perp})^2X\Theta\Theta^{\tran}\\
=&\left(\begin{array}{cc} UU^{\tran}ZV_{\perp}V_{\perp}Z^{\tran}U_{\perp}U_{\perp}^{\tran}ZV_{\perp}V_{\perp}^{\tran}Z^{\tran}UU^{\tran}&0\\
0&VV^{\tran}Z^{\tran}U_{\perp}U_{\perp}^{\tran}ZV_{\perp}V_{\perp}^{\tran}Z^{\tran}U_{\perp}U_{\perp}^{\tran}ZVV^{\tran}\end{array}\right).
\end{align*}
By the independence between $U^{\tran}Z$ and $U_{\perp}^{\tran}Z$, independence between $V^{\tran}Z^{\tran}$ and $V_{\perp}^{\tran}Z^{\tran}$, we immediately obtain
\begin{align*}
\EE \Theta\Theta^{\tran}X(\mathfrak{P}^{\perp}X\mathfrak{P}^{\perp})^2X\Theta\Theta^{\tran}=&\EE\left(
\begin{array}{cc}
d_{1-}UU^{\tran}ZV_{\perp}V_{\perp}^{\tran}Z^{\tran}UU^{\tran}&0\\
0&d_{2-}VV^{\tran}Z^{\tran}U_{\perp}U_{\perp}^{\tran}ZVV^{\tran}
\end{array}
\right)\\
=&d_{1-}d_{2-}\Theta\Theta^{\tran}
\end{align*}
where $d_{1-}=d_1-r$ and $d_{2-}=d_2-r$. Then, 
\begin{align*}
\EE\big<\Theta\Theta^{\tran}, Q^{(s_1s_2)}_4\big>=2d_{1-}d_{2-}\|\Lambda^{-2}\|_{\rm F}^2
\end{align*}
for all $(s_1,s_2)=(1,3)$, $(s_1,s_2)=(2,2)$ and $(s_1,s_2)=(3,1)$. \\
{\it Case 2: $\tau=3$.} In this case, the only possible even numbers are $t_1=2$ and $t_2=2$. There are three pairs of $(s_1,s_2,s_3)\in \big\{(1,1,2), (1,2,1), (2,1,1)\big\}$. 
W.L.O.G., consider $s_1=1, s_2=1, s_3=2$, we have 
$$
Q_{22}^{(112)}=\mathfrak{P}^{-1}X\mathfrak{P}^{\perp}X\mathfrak{P}^{-1}X\mathfrak{P}^{\perp}X\mathfrak{P}^{-2}.
$$
Similarly, we can write 
\begin{align*}
\EE\tr(Q_{22}^{(112)})=&\EE\tr\big(U\Lambda^{-1}V^{\tran}Z^{\tran}U_{\perp}U_{\perp}^{\tran}ZV\Lambda^{-1}U^{\tran}ZV_{\perp}V_{\perp}^{\tran}Z^{\tran}U\Lambda^{-2}U^{\tran}\big)\\
+&\EE\tr\big(V\Lambda^{-1}U^{\tran}ZV_{\perp}V_{\perp}^{\tran}Z^{\tran}U\Lambda^{-1}V^{\tran}Z^{\tran}U_{\perp}U_{\perp}^{\tran}ZV\Lambda^{-2}V^{\tran}\big)\\
=&d_{2-}\EE\tr\big(U\Lambda^{-1}V^{\tran}Z^{\tran}U_{\perp}U_{\perp}^{\tran}ZV\Lambda^{-3}U^{\tran}\big)+d_{1-}\EE\tr\big(V\Lambda^{-1}U^{\tran}ZV_{\perp}V_{\perp}^{\tran}Z^{\tran}U\Lambda^{-3}V^{\tran}\big)\\
=&2d_{1-}d_{2-}\|\Lambda^{-2}\|_{\rm F}^2. 
\end{align*}
By symmetricity, the same equation holds for $\EE\tr(Q_{22}^{(211)})$. Next, we consider $(s_1,s_2,s_3)=(1,2,1)$. We will write
\begin{align*}
\EE\tr(Q_{22}^{(121)})=&\EE\tr\big(U\Lambda^{-1}V^{\tran}Z^{\tran}U_{\perp}U_{\perp}^{\tran}ZV\Lambda^{-2}V^{\tran}Z^{\tran}U_{\perp}U_{\perp}^{\tran}ZV\Lambda^{-1}U^{\tran}\big)\\
+&\EE\tr\big(V\Lambda^{-1}U^{\tran}ZV_{\perp}V_{\perp}^{\tran}Z^{\tran}U\Lambda^{-2}U^{\tran}ZV_{\perp}V_{\perp}^{\tran}Z^{\tran}U\Lambda^{-1}V^{\tran}\big)\\
=&\EE\|\Lambda^{-1}\tilde Z_1\tilde Z_1^{\tran}\Lambda^{-1}\|_{\rm F}^2+\EE\|\Lambda^{-1}\tilde{Z}_2\tilde{Z}_2^{\tran}\Lambda^{-1}\|_{\rm F}^2
\end{align*}
where $\tilde Z_1\in\RR^{r\times d_{1-}}$ and $\tilde Z_2\in\RR^{r\times d_{2-}}$ contain i.i.d. standard normal entries. By Lemma~\ref{lem:LambdaZ_frob} in the Appendix, we obtain
$$
\EE\tr\big(Q_{22}^{(121)}\big)=(d_{1-}^2+d_{2-}^2)\|\Lambda^{-2}\|_{\rm F}^2+(d_{1-}+d_{2-})\big(\|\Lambda^{-2}\|_{\rm F}^2+\|\Lambda^{-1}\|_{\rm F}^4\big).
$$
Therefore, we conclude that 
\begin{align*}
\Big|-\EE\big<\Theta\Theta^{\tran},\calS_{A,4}(X)\big> + (d_{1-}-d_{2-})^2\|\Lambda^{-2}\|_{\rm F}^2\Big|\leq C_1\cdot \frac{r^2d_{\max}}{\lambda_r^4}
\end{align*}
for some absolute constant $C_1>0$ where we also include those smaller terms when some $t_i$ is odd  as discussed in {\bf Property 1}. Together with the proof of Lemma~\ref{lem:first_order_approx}, we conclude that 
\begin{align*}
\Big|\EE\|\hat\Theta\hat\Theta^{\tran}-\Theta\Theta^{\tran}\|_{\rm F}^2-2\big(d_{\star}\|\Lambda^{-1}\|_{\rm F}^2-\Delta_d^2\|\Lambda^{-2}\|_{\rm F}^2\big) \Big|\leq C_1\cdot \frac{r^2d_{\max}}{\lambda_r^4}+C_2\cdot \frac{rd_{\max}^3}{\lambda_r^6}
\end{align*}
where $\Delta_d=d_1-d_2$ and $C_1,C_2>0$ are absolute constants.

\subsection{Proof of Lemma~\ref{lem:fourth_order_approx}.} 
To characterize $\EE\big<\Theta\Theta^{\tran},\calS_{A,2k}(X)\big>$ more easily, we observe the following property.

\paragraph{Property 2: effect from distinct singular values are negligible.} Recall that 
$$
\EE\big<\Theta\Theta^{\tran},\calS_{A,2k}(X)\big>=\sum_{\bs: s_1+\cdots+s_{2k+1}=2k}(-1)^{1+\tau(\bs)}\cdot \EE \big<\Theta\Theta^{\tran}, \mathfrak{P}^{-s_1}X\cdots X\mathfrak{P}^{-s_{2k+1}}\big>.
$$
As proved in {\bf Property 1}, we have 
\begin{align*}
\Big|\EE\big<\Theta\Theta^{\tran},\calS_{A,2k}(X)\big>-\sum_{\tau\geq 2}(-1)^{1+\tau}\sum_{s_1+\cdots+s_{\tau}=2k}\sum_{t_1+\cdots+t_{\tau-1}=2k}\EE\big<&\Theta\Theta^{\tran}, Q_{t_1t_2\cdots t_{\tau-1}}^{(s_1s_2\cdots s_{\tau})}\big>\Big|\\
\leq& \frac{r^{2}}{d_{\max}}\cdot \Big(\frac{C_2d_{\max}}{\lambda_r^{2}}\Big)^k
\end{align*}
where the matrix $Q_{t_1t_2\cdots t_{\tau-1}}^{(s_1s_2\cdots s_{\tau})}$ is defined as in (\ref{eq:matrixQ}) and $t_1,\cdots, t_{\tau-1}$ are positive even numbers. Recall that $\Theta\Theta^{\tran}=\sum_{j=1}^r (P_j+P_{-j})$ and $\mathfrak{P}^{-s_1}=\sum_{j=1}^r \big[\lambda_j^{-s_1}P_j+(\lambda_{-j})^{-s_j}P_{-j}\big]$ where $\lambda_{-j}=-\lambda_j$. For each fixed $(s_1,\cdots,s_{\tau})$ and $(t_1,\cdots,t_{\tau-1})$ where $t_j$s are even numbers, we write 
\begin{align*}
\big<\Theta&\Theta^{\tran}, Q_{t_1t_2\cdots t_{\tau-1}}^{(s_1s_2\cdots s_{\tau})}\big>\\
=&\sum_{|j_1|, |j_2|, \cdots,|j_{\tau-1}|\geq 1}^r\lambda_{j_1}^{-(s_1+s_{\tau})}\lambda_{j_2}^{-s_2}\cdots\lambda_{j_{\tau-1}}^{-s_{\tau-1}}(\theta_{j_1}^{\tran}W_{t_1}\theta_{j_2})(\theta_{j_2}^{\tran}W_{t_2}\theta_{j_3})\cdots (\theta_{j_{\tau-1}}^{\tran}W_{t_{\tau-1}}\theta_{j_1})
\end{align*}
where the matrix $W_{t_1}=\underbrace{X\mathfrak{P}^{\perp}X\mathfrak{P}^{\perp}\cdots\mathfrak{P}^{\perp}X}_{t_1\ {\rm of }\ X}$ for positive even numbers $t_1$. Observe that 
$$
\theta_{j_1}^{\tran}W_{t_1}\theta_{j_2}=\theta^{\tran}_{j_1}W_{t_1}\theta_{j_2}=\theta_{j_1}^{\tran} X (\mathfrak{P}^{\perp}X\mathfrak{P}^{\perp})^{t_1-2}X\theta_{j_2}.
$$
We show that if there exists $1\leq k_0\leq \tau-1$ so that $|j_{k_0}|\neq |j_{k_0+1}|$, then $|\theta_{j_{k_0}}^{\tran}W_{t_{k_0}}\theta_{j_{k_0+1}}|$ is a negligibly smaller term. W.L.O.G., assume $|j_1|\neq |j_2|$ and then 
\begin{align*}
\bigg|\EE\sum_{\substack{|j_1|, |j_2|, \cdots,|j_{\tau-1}|\geq 1\\ |j_1|\neq |j_2|}}^r&\lambda_{j_1}^{-(s_1+s_{\tau})}\lambda_{j_2}^{-s_2}\cdots\lambda_{j_{\tau-1}}^{-s_{\tau-1}}(\theta_{j_1}^{\tran}W_{t_1}\theta_{j_2})(\theta_{j_2}^{\tran}W_{t_2}\theta_{j_3})\cdots (\theta_{j_{\tau-1}}^{\tran}W_{t_{\tau-1}}\theta_{j_1})\bigg|\\
=&\Big|\EE\sum_{|j_1|\neq |j_2|}\lambda_{j_1}^{-(s_1+s_{\tau})}\lambda_{j_2}^{-s_2}(\theta_{j_1}^{\tran}W_{t_1}\theta_{j_2})\theta_{j_2}^{\tran}W_{t_2}\mathfrak{P}^{-s_3}W_{t_3}\mathfrak{P}^{-s_4}\cdots\mathfrak{P}^{-s_{\tau-1}}W_{t_{\tau-1}}\theta_{j_1}\Big|\\
\leq&\frac{1}{\lambda_r^{2k}}\sum_{|j_1|\neq |j_2|}\EE\big|\theta_{j_1}^{\tran}W_{t_1}\theta_{j_2}\big|\|X\|^{2k-t_1}
\end{align*} 
Since $\theta_{j_1}$ and $\theta_{j_2}$ are orthogonal, we conclude that $X\theta_{j_1}$ and $X\theta_{j_2}$ are independent normal vectors from which we get that  $\theta_{j_1}^{\tran}W_{t_1}\theta_{j_2}|\mathfrak{P}^{\perp}X\mathfrak{P}^{\perp}$ is sub-exponential and $\EE |\theta_{j_1}^{\tran}W_{t_1}\theta_{j_2}|=O\big(\|(\mathfrak{P}^{\perp}X\mathfrak{P}^{\perp})^{t_1-2}\|_{\rm F}\big)$. Therefore, we get
\begin{align*}
\EE\big|&\theta_{j_1}^{\tran}W_{t_1}\theta_{j_2}\big|\|X\|^{2k-t_1}\\
=&\EE\big|\theta_{j_1}^{\tran}W_{t_1}\theta_{j_2}\big|\|X\|^{2k-t_1}{\bf 1}(\|X\|\leq C_1\sqrt{d_{\max}})+\EE\big|\theta_{j_1}^{\tran}W_{t_1}\theta_{j_2}\big|\|X\|^{2k-t_1}{\bf 1}(\|X\|\geq C_1\sqrt{d_{\max}})\\
\leq&\EE^{1/2}\big|\theta_{j_1}^{\tran}W_{t_1}\theta_{j_2}\big|^2 \cdot (C_1^2d_{\max})^{k-t_1/2}{\bf 1}(\|X\|\leq C_1d_{\max}^{1/2})+e^{-d_{\max}/2} (C_1d_{\max})^{k}\\
\lesssim &\frac{1}{\sqrt{d_{\max}}}\cdot (C_2d_{\max})^{k}+e^{-d_{\max}/2} (C_2d_{\max})^{k}.
\end{align*}
As a result, we conclude that  
\begin{align*}
\bigg|\EE\sum_{|j_1|, |j_2|, \cdots,|j_{\tau-1}|\geq 1}^r&\lambda_{j_1}^{-(s_1+s_{\tau})}\lambda_{j_2}^{-s_2}\cdots\lambda_{j_{\tau-1}}^{-s_{\tau-1}}(\theta_{j_1}^{\tran}W_{t_1}\theta_{j_2})(\theta_{j_2}^{\tran}W_{t_2}\theta_{j_3})\cdots (\theta_{j_{\tau-1}}^{\tran}W_{t_{\tau-1}}\theta_{j_1})\bigg|\\
\leq& \frac{C_1r^2}{\sqrt{d_{\max}}}\cdot\Big(\frac{C_2d_{\max}}{\lambda_r^2}\Big)^k+C_3e^{-d_{\max}/2}\cdot\Big(\frac{C_2d_{\max}}{\lambda_r^2}\Big)^k\leq  \frac{C_1r^2}{\sqrt{d_{\max}}}\cdot\Big(\frac{C_2d_{\max}}{\lambda_r^2}\Big)^k
\end{align*}
for some absolute constants $C_1,C_2>0$. 

It suggests that the dominating terms come from those tuples $(j_1,j_2,\cdots,j_{\tau-1})$ such that $|j_1|=|j_2|=\cdots=|j_{\tau-1}|$. 
Now, we define $\mathfrak{P}_j=\lambda_j P_j+\lambda_{-j} P_{-j}$.  
To this end, we conclude
\begin{align}
\Big|\EE\big<\Theta\Theta^{\tran},\calS_{A,2k}(X)\big>-\sum_{j=1}^{r}\sum_{\tau\geq 2}(-1)^{1+\tau}&\sum_{\substack{\bs:s_1+\cdots+s_{\tau}=2k, s_1, s_{\tau}>0\\ \bt: t_1+\cdots +t_{\tau-1}=2k}}\EE\tr\big(\mathfrak{P}_j^{-s_1}W_{t_1}\mathfrak{P}_j^{-s_2}W_{t_2}\cdots \mathfrak{P}_j^{-s_{\tau}}\big)\Big|\nonumber\\
\leq& \frac{C_1r^2}{\sqrt{d_{\max}}}\cdot \Big(\frac{C_2d_{\max}}{\lambda_r^2}\Big)^k\label{eq:SA2k}
\end{align}
for some absolute constants $C_1,C_2>0$. The above fact suggests that it suffices to focus on the effect from individual singular values (i.e., for any fixed $1\leq j\leq r$). Moreover, it is easy to check that 
$$
\mathfrak{P}_j^{-s_1}W_{t_1}\mathfrak{P}_j^{-s_2}W_{t_2}\cdots \mathfrak{P}_j^{-s_{\tau}}=\frac{1}{\lambda_j^{2k}}\cdot \tilde{\mathfrak{P}}_j^{-s_1}W_{t_1}\tilde{\mathfrak{P}}_j^{-s_2}W_{t_2}\cdots \tilde{\mathfrak{P}}_j^{-s_{\tau}}
$$
where $\tilde{\mathfrak{P}}_j^{-s}=P_j+(-1)^s P_{-j}$ implying that the $k$-th order error term has dominator $\lambda_j^{2k}$. 
To this end, we prove the following lemma in the Appendix.
\begin{lemma}\label{lem:rank-one}
For any $1\leq j\leq r$ and $k\geq 2$, we obtain 
\begin{align*}
\Big|\sum_{\tau\geq 2}(-1)^{1+\tau}&\sum_{\substack{\bs:s_1+\cdots+s_{\tau}=2k, s_1, s_{\tau}>0\\ \bt: t_1+\cdots +t_{\tau-1}=2k}}\EE\tr\big(\mathfrak{P}_j^{-s_1}W_{t_1}\mathfrak{P}_j^{-s_2}\cdots \mathfrak{P}_j^{-s_{\tau}}\big)-\frac{(-1)^{k}(d_{1-}^{k-1}-d_{2-}^{k-1})(d_{1-}-d_{2-})}{\lambda_j^{2k}}\Big|\\
\leq& \frac{C_1k}{\sqrt{d_{\max}}}\cdot \Big(\frac{C_2d_{\max}}{\lambda_r^2}\Big)^k
\end{align*}
for some absolute constants $C_1,C_2>0$.
\end{lemma}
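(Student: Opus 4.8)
The plan is to reduce the trace to a $2\times2$ computation supported on the plane $\mathrm{span}\{\theta_j,\theta_{-j}\}=\mathrm{span}\{(u_j;0),(0;v_j)\}$, to replace the random quadratic forms that appear there by their deterministic leading values (absorbing all resulting errors into the stated bound), and to finish with a purely combinatorial identity. Recall that throughout this sum the $t_i$ are positive \emph{even} integers (as in the derivation preceding the lemma).

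\textbf{Step 1: rank-two reduction.}
Since each $t_i$ is even, $W_{t_i}=X(\mathfrak{P}^{\perp}X)^{t_i-1}$ is an anti-block-diagonal matrix times $X$, hence block diagonal; writing $m_i:=t_i/2-1\ge0$ one checks
\[
W_{t_i}=\begin{pmatrix}Z(V_{\perp}V_{\perp}^{\tran}Z^{\tran}U_{\perp}U_{\perp}^{\tran}Z)^{m_i}V_{\perp}V_{\perp}^{\tran}Z^{\tran}&0\\0&Z^{\tran}(U_{\perp}U_{\perp}^{\tran}ZV_{\perp}V_{\perp}^{\tran}Z^{\tran})^{m_i}U_{\perp}U_{\perp}^{\tran}Z\end{pmatrix}.
\]
The matrices $\tilde{\mathfrak{P}}_j^{-s}=P_j+(-1)^{s}P_{-j}$ are supported on the orthonormal pair $e_1:=(u_j;0)$, $e_2:=(0;v_j)$, so in that basis $\tilde{\mathfrak{P}}_j^{-s}$ equals $I_2$ for even $s$ and the swap $E:=\left(\begin{smallmatrix}0&1\\1&0\end{smallmatrix}\right)$ for odd $s$, while $\Pi W_{t_i}\Pi$ (where $\Pi=P_j+P_{-j}$) restricts to $\diag(\alpha_i,\beta_i)$ with $\alpha_i:=u_j^{\tran}Z(V_{\perp}V_{\perp}^{\tran}Z^{\tran}U_{\perp}U_{\perp}^{\tran}Z)^{m_i}V_{\perp}V_{\perp}^{\tran}Z^{\tran}u_j$ and $\beta_i$ the analogue with $u_j\leftrightarrow v_j$, $U\leftrightarrow V$ (off-diagonal entries vanish by block diagonality). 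Therefore, using $\mathfrak{P}_j^{-s}=\lambda_j^{-s}\tilde{\mathfrak{P}}_j^{-s}$, $\mathfrak{P}_j^{-s}=\Pi\mathfrak{P}_j^{-s}\Pi$, and $s_1+\cdots+s_\tau=2k$,
\[
\tr\big(\mathfrak{P}_j^{-s_1}W_{t_1}\cdots\mathfrak{P}_j^{-s_{\tau}}\big)=\lambda_j^{-2k}\,\tr\big(E^{s_1}\diag(\alpha_1,\beta_1)E^{s_2}\cdots E^{s_{\tau-1}}\diag(\alpha_{\tau-1},\beta_{\tau-1})E^{s_{\tau}}\big),
\]
with $E^{s}:=I_2$ for even $s$ and $E^{s}:=E$ for odd $s$. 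As $\sum s_i$ is even the number of odd $s_i$ is even, so the word $E^{s_1}\cdots E^{s_\tau}$ is diagonal, and the trace equals $\prod_{i=1}^{\tau-1}\gamma_i^{+}+\prod_{i=1}^{\tau-1}\gamma_i^{-}$, where $\gamma_i^{+}$ is $\alpha_i$ when $p_i:=\#\{\ell\le i:s_\ell\text{ odd}\}$ is even and $\beta_i$ when $p_i$ is odd, and $\gamma_i^{-}$ is the opposite choice.

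\textbf{Step 2: replacing the quadratic forms.}
Because $u_j\perp U_{\perp}$, conditionally on $U_{\perp}^{\tran}Z$ one has $\alpha_i=g^{\tran}N_i\,g$ with $g:=Z^{\tran}u_j\sim\mathcal N(0,I_{d_2})$ \emph{independent} of $N_i:=(V_{\perp}V_{\perp}^{\tran}Z^{\tran}U_{\perp}U_{\perp}^{\tran}Z)^{m_i}V_{\perp}V_{\perp}^{\tran}$, so $\EE[\alpha_i\mid U_{\perp}^{\tran}Z]=\tr N_i$; by the Hanson--Wright inequality and the a priori bound $\|X\|\le C_2\sqrt{d_{\max}}$ on the event $\calE_1$ (the contribution of $\calE_1^{\rm c}$ being handled by the moment bounds $\EE^{1/p}\|X\|^{p}\lesssim\sqrt{d_{\max}}$ exactly as in the proof of Theorem~\ref{thm:distE_normal}), $\alpha_i$ equals its mean up to a relative fluctuation $O(1/\sqrt{d_{\max}})$. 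A rotation carrying $V_{\perp}V_{\perp}^{\tran}$ to a coordinate projector identifies $\EE\alpha_i$ with the Wishart moment $\EE\tr\big((G^{\tran}G)^{m_i}\big)$, $G$ being $d_{1-}\times d_{2-}$ standard Gaussian; this is a polynomial in $(d_{1-},d_{2-})$ whose homogeneous part $L^{(0)}(m_i)$ of top degree $m_i+1$ is $\sum_{a}N(m_i,a)\,d_{1-}^{a}d_{2-}^{m_i+1-a}$ (Narayana coefficients, with the convention $L^{(0)}(0)=d_{2-}$), the remaining terms having degree $\le m_i$; for $\beta_i$ everything is the same with $d_{1-}\leftrightarrow d_{2-}$, top part $L^{(1)}(m_i)$. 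Substituting these and collecting errors: each $\prod_i\gamma_i^{\pm}\lesssim d_{\max}^{k}$, the relative errors entering (the fluctuations of the $\alpha_i,\beta_i$; their mutual correlations, which enter $\EE\prod_i\gamma_i^{\pm}-\prod_i\EE\gamma_i^{\pm}$ only at second order; the sub-leading lower-degree parts of the Wishart moments) are all $O(1/\sqrt{d_{\max}})$, and since $\lambda_j\ge\lambda_r$ and the number of triples $(\tau,\bs,\bt)$ is $\le C^{k}$, the aggregate is comfortably below $\tfrac{C_1k}{\sqrt{d_{\max}}}(C_2d_{\max}/\lambda_r^2)^{k}$ for $C_2$ large. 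Hence it remains to prove the \emph{deterministic} identity
\[
\sum_{\tau\ge2}(-1)^{1+\tau}\sum_{\substack{s_1+\cdots+s_{\tau}=2k\\ s_i\ge1}}\ \sum_{\substack{t_1+\cdots+t_{\tau-1}=2k\\ t_i\text{ even}}}\Big(\textstyle\prod_{i=1}^{\tau-1}\ell_i^{+}+\prod_{i=1}^{\tau-1}\ell_i^{-}\Big)=(-1)^{k}(d_{1-}^{\,k-1}-d_{2-}^{\,k-1})(d_{1-}-d_{2-}),
\]
where $\ell_i^{+}=L^{(p_i\bmod2)}(m_i)$, $\ell_i^{-}=L^{((p_i+1)\bmod2)}(m_i)$, $m_i=t_i/2-1$; both sides are homogeneous of degree $k$ in $(d_{1-},d_{2-})$.

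\textbf{Step 3: the combinatorial identity (the main obstacle).}
In the left-hand side the $s_i$ enter only through their parities, and the number of admissible $(s_1,\dots,s_{\tau})$ realizing a parity vector with $o$ odd entries (necessarily $o$ even) is $\binom{k-1+o/2}{\tau-1}$, independent of the positions of the odd entries. One is then left with nested finite sums which I plan to collapse by generating functions: the Narayana part is governed by the algebraic generating function $\sum_{m\ge0}L^{(0)}(m)z^{m+1}$ and its $d_{1-}\!\leftrightarrow\!d_{2-}$ image, the ``$\mathfrak{P}_j$-slots'' contribute a geometric-type factor carrying the sign $(-1)$ and the binomial weights, and the running-parity bookkeeping is a two-state transfer matrix; extracting the relevant coefficient by a residue computation in the spirit of the proof of Theorem~\ref{thm:hat_Theta} (or, alternatively, by induction on $k$) yields the stated closed form. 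This last step is where the real work lies: Step~1 is purely structural and Step~2 is routine Gaussian-concentration bookkeeping, whereas pinning down the generating-function/residue manipulation that makes the alternating sum telescope is the crux. As a consistency check, for $k=2$ the left-hand side is $-6d_{1-}d_{2-}$ (from $\tau=2$, $m_1=1$) plus $d_{1-}^{2}+d_{2-}^{2}+4d_{1-}d_{2-}$ (from $\tau=3$, $m_1=m_2=0$), which sums to $(d_{1-}-d_{2-})^{2}$, matching Lemma~\ref{lem:second_order_approx}.
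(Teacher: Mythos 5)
Your Steps~1--2 are sound, and they actually take a slightly different route from the paper: you keep the symmetric dilation and restrict everything to ${\rm span}\{\theta_j,\theta_{-j}\}$, so each term sits exactly at order $\lambda_j^{-2k}$ and the randomness reduces to the Wishart-type quadratic forms $\alpha_i,\beta_i$, whereas the paper reduces to a rank-one spiked model and works with $\hat M\hat M^{\tran}=\lambda^2 uu^{\tran}+\Delta$, which forces it to carry the $\lambda$-dependent cross terms $\beta^{\Delta}_{t,1}$ and to track how a single $\EE\big<uu^{\tran},\calS_{T,k}(\Delta)\big>$ contributes to several orders $E_{2k_1}$. Your $k=2$ check is correct and matches Lemma~\ref{lem:second_order_approx}. (Two small caveats: the lemma's sum is over all $t_i\geq1$, so you still owe the per-$j$ version of the ``odd $t_i$ are negligible'' estimate rather than assuming evenness outright; and the concentration/decoupling bookkeeping in Step~2, while plausible, is only sketched.)

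The genuine gap is Step~3, which is precisely where the lemma's content lies. The claim that the alternating sum over $(\tau,\bs,\bt)$ of products of the leading (Narayana) Wishart moments, with slot weights $L^{(0)}(m_i)$ or $L^{(1)}(m_i)$ chosen by the running parity $p_i$ of the $s$'s, collapses to $(-1)^{k}(d_{1-}^{k-1}-d_{2-}^{k-1})(d_{1-}-d_{2-})$ is stated as a plan (``I plan to collapse by generating functions \dots the crux''), not proved, and verifying $k=2$ does not establish it for general $k$. This identity is the bulk of the paper's proof: there the sum is organized one-sidedly so that every slot carries the same moments $\beta_t$, it is recognized as the $k$-th derivative at $\alpha=0$ of $(1-g(\alpha))^{k-1}/(\lambda^{2k}k!)$ with $g$ the explicit Marchenko--Pastur generating function, one splits $1-g=\tfrac12\big[g_+-\tfrac{\lambda^2}{d_{2-}}g_-\big]$, computes the relevant derivatives term by term, and closes with two binomial identities obtained by extracting the coefficient of $x^{k_0-1}$ from $(1+x)^{k_0}[1-(1+x)]^{k_0-1}$ and $(1+x)^{k_0-2}[1-(1+x)]^{k_0-1}$. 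Your formulation has an extra wrinkle the paper's route avoids: because the weights alternate between $L^{(0)}$ and $L^{(1)}$ according to the partial parities, the sum does not factor into identical slots, so the proposed two-state transfer-matrix/residue computation is genuinely nontrivial and must be carried out (or the problem re-split into the $u$- and $v$-parts as the paper does). Until that closed-form evaluation is supplied, the lemma is not proved.
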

By Lemma~\ref{lem:rank-one} and (\ref{eq:SA2k}), it holds for all $k\geq 2$ that
$$
\big|\EE\big<\Theta\Theta^{\tran},\calS_{A,2k}(X)\big>-(-1)^k(d_{1-}^{k-1}-d_{2-}^{k-1})(d_{1-}-d_{2-})\|\Lambda^{-k}\|_{\rm F}^2 \big|\leq \frac{C_1(r^2+k)}{\sqrt{d_{\max}}}\cdot\Big(\frac{C_2d_{\max}}{\lambda_r^2}\Big)^k
$$
for some absolute constants $C_1, C_2>0$, which concludes the proof.

\subsection{Proof of CLT theorems in Section~\ref{sec:explicit_CLT}}

\paragraph{Proof of Theorem~\ref{thm:first_order_CLT}}
Recall Theorem~\ref{thm:distE_normal}, we end up with 
\begin{align*}
\sup_{x\in\RR}\bigg|\PP\bigg(&\frac{{\rm dist}^2[(\hat U,\hat V), (U,V)]-\EE{\rm dist}^2[(\hat U,\hat V), (U,V)]}{\sqrt{8(d_1+d_2-2r)}\|\Lambda^{-2}\|_{\rm F}}\leq x\bigg)-\Phi(x) \bigg|\\
& \leq C_2\Big(\frac{\sqrt{r}}{\|\Lambda^{-2}\|_{\rm F}\lambda_r^2}\Big)\cdot \sqrt{\frac{(rd_{\max})^{1/2}}{\lambda_r}}+e^{-c_1d_{\max}}+C_2\Big(\frac{\|\Lambda^{-1}\|_{\rm F}^4}{\|\Lambda^{-2}\|_{\rm F}^2}\Big)^{3/2}\cdot \frac{1}{\sqrt{d_{\max}}}+e^{-\lambda_r/\sqrt{rd_{\max}}}.
\end{align*}
By Lemma~\ref{lem:first_order_approx}, we get
\begin{align*}
\big| \EE{\rm dist}^2[(\hat U,\hat V), (U,V)]-2d_{\star}\|\Lambda^{-1}\|_{\rm F}^2\big|
\leq C_2\frac{rd_{\max}^2}{\lambda_r^4}.
\end{align*}
Therefore, 
\begin{align*}
\Big| \frac{\EE{\rm dist}^2[(\hat U,\hat V), (U,V)]-2d_{\star}\|\Lambda^{-1}\|_{\rm F}^2}{\sqrt{8d_{\star}}\|\Lambda^{-2}\|_{\rm F}}\Big|\leq C_2\frac{rd_{\max}^{3/2}}{\lambda_r^2}.
\end{align*}
%where the last inequality is due to the condition $\lambda_r\geq C_1d_{\max}^{1/2}$ for some large enough constant $C_1>0$. 
By the Lipschitz property of $\Phi(x)$ and applying similar technical as in proof of Theorem~\ref{thm:distE_normal}, we can get 
\begin{align*}
\sup_{x\in\RR}\bigg|\PP\bigg(&\frac{{\rm dist}^2[(\hat U,\hat V), (U,V)]-2d_{\star}\|\Lambda^{-1}\|_{\rm F}^2}{\sqrt{8d_{\star}}\|\Lambda^{-2}\|_{\rm F}}\leq x\bigg)-\Phi(x) \bigg|
\leq C_2\Big(\frac{\sqrt{r}}{\|\Lambda^{-2}\|_{\rm F}\lambda_r^2}\Big)\cdot \sqrt{\frac{(rd_{\max})^{1/2}}{\lambda_r}}\\
&+e^{-c_1d_{\max}}+C_2\Big(\frac{\|\Lambda^{-1}\|_{\rm F}^4}{\|\Lambda^{-2}\|_{\rm F}^2}\Big)^{3/2}\cdot \frac{1}{\sqrt{d_{\max}}}+C_3\frac{rd_{\max}^{3/2}}{\lambda_r^2}+e^{-\lambda_r/\sqrt{rd_{\max}}}. 
\end{align*}

\paragraph{Proof of Theorem~\ref{thm:second_order_CLT}}
By Lemma~\ref{lem:fourth_order_approx}, we have 
$$
\Big|\EE{\rm dist}^2[(\hat U,\hat V), (U,V)]-B_k\Big|\leq C_4\frac{r^2d_{\max}}{\lambda_r^4}+\frac{C_5r^2}{\sqrt{d_{\max}}}\cdot\Big(\frac{d_{\max}}{\lambda_r^2}\Big)^3+C_6r\Big(\frac{C_3d_{\max}}{\lambda_r^2}\Big)^{k+1}.
$$
The rest of the proof is the same as in the proof of Theorem~\ref{thm:first_order_CLT}. 

\section{Appendix}

\subsection{Supporting lemmas}\label{sec:supp_lem}
 
\begin{proof}[Proof of Lemma~\ref{lem:fX_Lip}]
Recall that 
$$
f_t(X_1)=\sum_{k\geq 3}\big<\Theta\Theta^{\tran},\calS_{A,k}(X_1)\big>\phi\Big(\frac{\|X_1\|}{ t\cdot \sqrt{d_{\max}}}\Big). 
$$
{\it Case 1}: if $\|X_1\|>2t\sqrt{d_{\max}}$ and $\|X_2\|>2t\sqrt{d_{\max}}$, then $f_t(X_1)=f_t(X_2)=0$ by definition of $\phi(\cdot)$ where the claimed inequality holds trivially. \\
{\it Case 2}: if $\|X_1\|\leq 2t\sqrt{d_{\max}}$ and $\|X_2\|>2t\sqrt{d_{\max}}$, then $f_t(X_2)=0$. We get, by Lipschitz property of $\phi(\cdot)$, that
\begin{align*}
\Big|f_t(X_1)-f_t(X_2)\Big|=& \bigg|\sum_{k\geq 3}\big<\Theta\Theta^{\tran},\calS_{A,k}(X_1)\big>\cdot \bigg(\phi\Big(\frac{\|X_1\|}{ t\cdot \sqrt{d_{\max}}}\Big)-\phi\Big(\frac{\|X_2\|}{ t\cdot \sqrt{d_{\max}}}\Big)\bigg)\bigg| \\
\leq&\sum_{k\geq 3}2r\big\|\calS_{A,k}(X_1)\big\|\cdot \frac{\|X_1-X_2\|_{\rm F}}{t\cdot \sqrt{d_{\max}}}\\
\leq&\frac{2r\|X_1-X_2\|_{\rm F}}{t\cdot \sqrt{d_{\max}}}\cdot\sum_{k\geq 3}\sum_{\bs: s_1+\cdots+s_{k+1}=k}\Big\|\mathfrak{P}^{-s_1}X_1\mathfrak{P}^{-s_2}X_1\cdots X_1\mathfrak{P}^{-s_{k+1}} \Big\|\\
\leq&\frac{2r\|X_1-X_2\|_{\rm F}}{t\cdot \sqrt{d_{\max}}}\cdot\sum_{k\geq 3}\sum_{\bs: s_1+\cdots+s_{k+1}=k}\frac{\|X_1\|^{k}}{\lambda_r^{k}}\\
\leq&\frac{2r\|X_1-X_2\|_{\rm F}}{t\cdot \sqrt{d_{\max}}}\cdot\sum_{k\geq 3}\Big(\frac{4\|X_1\|}{\lambda_r}\Big)^k\\
\leq& C_4t^2\frac{r\|X_1-X_2\|_{\rm F}}{\sqrt{d_{\max}}}\cdot \frac{d_{\max}^{3/2}}{\lambda_r^3}
\end{align*}
where the last inequality holds as long as $\lambda_r\geq 9t\sqrt{d_{\max}}$. \\
{\it Case 3}: if $\|X_1\|\leq 2td_{\max}^{1/2}$ and $\|X_2\|\leq 2td_{\max}^{1/2}$. Then, 
\begin{align*}
\Big|f_t&(X_1)-f_t(X_2) \Big|\leq 2r\sum_{k\geq 3}\Big\|\calS_{A,k}(X_1)\phi\Big(\frac{\|X_1\|}{t\cdot \sqrt{d_{\max}}}\Big)-\calS_{A,k}(X_2)\phi\Big(\frac{\|X_2\|}{t\cdot \sqrt{d_{\max}}}\Big) \Big\|\\
\leq& 2r\sum_{k\geq 3}\sum_{\bs: s_1+\cdots+s_{k+1}=k}\Big\|\mathfrak{P}^{-s_1}X_1\cdots X_1\mathfrak{P}^{-s_{k+1}}\phi\Big(\frac{\|X_1\|}{t\cdot \sqrt{d_{\max}}}\Big)- \mathfrak{P}^{-s_1}X_2\cdots X_2\mathfrak{P}^{-s_{k+1}}\phi\Big(\frac{\|X_2\|}{t\cdot \sqrt{d_{\max}}}\Big)\Big\|\\
\leq&2r\sum_{k\geq 3}\sum_{\bs:s_1+\cdots+s_{k+1}=k}(k+2)\cdot \frac{(2td_{\max}^{1/2})^{k-1}}{\lambda_r^{k}}\|X_1-X_2\|_{\rm F}\leq C_4t^2\cdot\frac{rd_{\max}}{\lambda_r^3}\|X_1-X_2\|_{\rm F}
\end{align*}
where the last inequality holds as long as $\lambda_r\geq 9t\sqrt{d_{\max}}$. Therefore, we conclude the proof of Lemma~\ref{lem:fX_Lip}.
\end{proof}

 \begin{proof}[Proof of Lemma~\ref{lem:rank-one}]
 Based on {\bf Property 2} and eq. (\ref{eq:SA2k}), it suffices to calculate the quantities 
 $\EE\tr\big(\mathfrak{P}_j^{-s_1}W_{t_1}\mathfrak{P}_j^{-s_2}W_{t_2}\cdots \mathfrak{P}_j^{-s_{\tau}}\big)$ which relies on singular values $\lambda_j$ and singular vectors $u_j, v_j$ only. Moreover, the actual forms of $u_j, v_j$ does not affect the values. By choosing $\{u_j\}_{j=1}^r$ and $\{v_j\}_{j=1}^r$ as the first $r$ canonical basis vectors in $\RR^{d_1}$ and $\RR^{d_2}$, it is easy to check that we can reduce the calculations to the rank-one spiked model with singular value $\lambda_j$. To leverage the dimensionality effect where $U_{\perp}^{\tran}ZV_{\perp}\in\RR^{d_{1-}\times d_{2-}}$ has i.i.d. standard normal entries, we consider the rank-one spiked model with 
\begin{equation}\label{eq:rank-one}
\hat M =\lambda (u\otimes v) +Z\in\RR^{(d_{1-}+1)\times (d_{2-}+1)}
\end{equation}
where $Z$ has i.i.d. standard normal entries and $d_{1-}=d_1-r, d_{2-}=d_2-r$. Let $\hat u$ and $\hat v$ denote the leading left and right singular vectors of $\hat M$. By fact (\ref{eq:SA2k}), it suffices to calculate the $k$-th order approximation of $\|\hat u\hat u^{\tran}-uu^{\tran}\|_{\rm F}^2+\|\hat v\hat v^{\tran}-vv^{\tran}\|_{\rm F}^2$. In the proof, we calculate the errors $\|\hat u\hat u^{\tran}-uu^{\tran}\|_{\rm F}^2$ and $\|\hat v\hat v^{\tran}-vv^{\tran}\|_{\rm F}^2$ separately. W.L.O.G., we just deal with $\|\hat u\hat u^{\tran}-uu^{\tran}\|_{\rm F}^2$ and consider $d_1\leq d_2$\footnote{This condition just simplifies our calculation when dealing with the Marchenko Pastur law. Our results do not rely on the condition $d_1\leq d_2$}. 

Recall that we aim to calculate the $k$-th order error term in $\|uu^{\tran}-\hat u\hat u^{\tran}\|_{\rm F}^2$. To this end, we write the error terms as
\begin{equation}\label{eq:EEk_def}
\EE\|\hat u\hat u^{\tran}-uu^{\tran}\|_{\rm F}^2= 2\sum_{k=1}^{\infty}\frac{E_{2k}}{\lambda^{2k}}.
\end{equation}
We show that $E_{2k}=(-1)^kd_{1-}^{k-1}(d_{1-}-d_{2-})\cdot \Big[1+O\Big(\frac{C_1^k}{\sqrt{d_{\max}}}\Big)\Big]$ for some absolute constant $C_1>0$.  To this end, we consider the second-order (see \citep{xia2019sup}) moment trick (denote $T=\lambda^2 (u\otimes u)$)
\begin{equation}\label{eq:MMtran}
\hat M\hat M^{\tran}=\lambda^2 (u\otimes u) + \Delta\in\RR^{(d_{1-}+1)\times (d_{1-}+1)}
\end{equation}
where $\Delta=\lambda uv^{\tran}Z^{\tran}+\lambda Zvu^{\tran}+ZZ^{\tran}$.  By eq. (\ref{eq:dist_formula}), we can write 
$$
\|\hat u\hat u^{\tran}-uu^{\tran}\|_{\rm F}^2=-2\sum_{k\geq 2}\big<uu^{\top}, \calS_{T,k}(\Delta)\big>
$$
where we define $\mathfrak{P}_u=\lambda (u\otimes u)$ and $\mathfrak{P}_u^{0}=\mathfrak{P}_u^{\perp}=U_{\perp}U_{\perp}^{\top}\in\RR^{(d_{1-}+1)\times d_{1-}}$ and 
\begin{align*}
\calS_{T,k}(\Delta)=\sum_{\bs: s_1+\cdots+s_{k+1}=k}(-1)^{\tau(\bs)+1}\cdot \mathfrak{P}_u^{-s_1}\Delta\mathfrak{P}_u^{s_2}\Delta\cdots\Delta\mathfrak{P}_u^{-s_{k+1}}.
\end{align*}
Now, we investigate $\big<uu^{\top},\calS_{T,k}(\Delta)\big>$ for all $k\geq 2$. Denote $W_{t_1}=\underbrace{\Delta\mathfrak{P}_u^{\perp}\Delta\cdots\mathfrak{P}^{\perp}_u\Delta}_{t_1\ {\rm of}\ \Delta }$ and we can write 
\begin{align*}
\big<uu^{\top},\calS_{T,k}(\Delta)\big>=&\sum_{\tau=2}^{k}(-1)^{\tau+1}\sum_{\substack{t_1+\cdots+t_{\tau-1}=k, t_j\geq 1\\ s_1+\cdots+s_{\tau}=k, s_j\geq 1}}\tr\big(\mathfrak{P}_u^{-s_1}W_{t_1}\mathfrak{P}_u^{-s_2}W_{t_2}\cdots \mathfrak{P}_u^{-s_{\tau-1}}W_{t_{\tau-1}}\mathfrak{P}_u^{-s_{\tau}}\big)\\
=&\frac{1}{\lambda^{2k}}\sum_{\tau=2}^{k}(-1)^{1+\tau}{k-1\choose \tau-1}\sum_{t_1+\cdots +t_{\tau-1}=k, t_j\geq 1}(u^{\tran}W_{t_1}u)(u^{\tran}W_{t_2}u)\cdots (u^{\tran}W_{t_{\tau-1}}u).
\end{align*}
Denote $\beta^{\Delta}_{t_1}=u^{\tran}W_{t_1}u$,  we can write concisely 
\begin{equation}\label{eq:STkDelta}
\EE\big<uu^{\top},\calS_{T,k}(\Delta)\big>=\frac{1}{\lambda^{2k}}\sum_{\tau=2}^{k}(-1)^{1+\tau}{k-1\choose \tau-1}\sum_{t_1+\cdots +t_{\tau-1}=k, t_j\geq 1}\EE\big(\beta_{t_1}^{\Delta}\beta_{t_2}^{\Delta}\cdots\beta_{t_{\tau-1}}^{\Delta}\big). 
\end{equation}
Now, we investigate the concentration property of $\beta^{\Delta}_t=u^{\tran}W_tu$. 
Clearly, we can write 
$$
\beta_1^{\Delta}=\underbrace{2\lambda\cdot(u^{\tran}Zv)}_{\beta_{1,1}^{\Delta}}+\underbrace{u^{\tran}ZZ^{\tran}u}_{\beta_{1,0}^{\Delta}}
$$
and for all $t\geq 2$, we write $\beta_t^{\Delta}=\beta_{t,1}^{\Delta}+\beta_{t,0}^{\Delta}$ where 
\begin{align*}
\beta_{t,0}^{\Delta}=&u^{\tran}ZZ^{\tran}U_{\perp}(U_{\perp}^{\tran}ZZ^{\tran}U_{\perp})^{t-2}U_{\perp}^{\tran}ZZ^{\tran}u+\lambda^2uv^{\tran}Z^{\tran}U_{\perp}(U_{\perp}^{\tran}ZZ^{\tran}U_{\perp})^{t-2}U_{\perp}^{\tran}Zvu^{\tran}\\
\beta_{t,1}^{\Delta}=&2\lambda uv^{\tran}Z^{\tran}U_{\perp}(U_{\perp}^{\tran}ZZ^{\tran}U_{\perp})^{t-2}U_{\perp}^{\tran}ZZ^{\tran}u.
\end{align*}
As a result, we can calculate 
$$
\EE (\beta_{t_1}^{\Delta}\beta_{t_2}^{\Delta}\cdots\beta_{t_{\tau-1}}^{\Delta})=\EE\big((\beta^{\Delta}_{t_1,0}+\beta^{\Delta}_{t_1,1})(\beta^{\Delta}_{t_2,0}+\beta^{\Delta}_{t_2,1})\cdots (\beta^{\Delta}_{t_{\tau-1},0}+\beta^{\Delta}_{t_{\tau-1},1})\big).
$$
It is easy to check that $\EE\beta_1^{\Delta}=d_{2-}+1$ and for $t\geq 2$
\begin{align*}
\EE\beta_t^{\Delta}=&\lambda^2\EE\big(v^{\tran}Z^{\tran}U_{\perp}(U_{\perp}^{\tran}ZZ^{\tran}U_{\perp})^{t-2}U_{\perp}^{\tran}Zv\big)+\EE\tr\big(Z^{\tran}U_{\perp}(U_{\perp}^{\tran}ZZ^{\tran}U_{\perp})^{t-2}U_{\perp}^{\tran}Z\big)\\
&=\Big(1+\frac{\lambda^2}{d_{2-}+1}\Big)\cdot \EE\tr\big(Z^{\tran}U_{\perp}(U_{\perp}^{\tran}ZZ^{\tran}U_{\perp})^{t-2}U_{\perp}^{\tran}Z\big)\\
&=\Big(1+\frac{\lambda^2}{d_{2-}+1}\Big)\cdot \EE\tr\big((U_{\perp}^{\tran}ZZ^{\tran}U_{\perp})^{t-1}\big)
\end{align*}
where the second equality can be checked by choosing $v=e_1\in\RR^{d_{2-}+1}$. Since $Z^{\tran}u$ and $Z^{\tran}U_{\perp}$ are independent, it is easy to check that 
$$
\EE \beta_{t_1,i_1}^{\Delta} \beta_{t_2,i_2}^{\Delta}\cdots \beta_{t_{\tau-1},i_{\tau-1}}^{\Delta}=0,\quad \textrm{ if } \sum_{j=1}^{\tau-1}i_j \textrm{ is an odd number}
$$
for all $i_1,i_2,\cdots,i_{\tau-1}\in\{0,1\}$. As a result, we observe that $\EE\big<uu^{\tran}, \calS_{T,k}\big>$ has contributions to $E_{2k}, E_{2k-2}, E_{2k-4},\cdots, E_{2\ceil{k/2}}$. (Recall that $E_{2k}$ is the coefficient for $\frac{1}{\lambda^{2k}}$.)

Moreover, since $Z^{\tran}u$ and $Z^{\tran}U_{\perp}$ are independent, we can conclude that 
$$
\beta^{\Delta}_{1,1}\sim\calN(0, 4\lambda^2)
$$
and for all $t\geq 2$, 
$$
\beta^{\Delta}_{t,1}\big| U_{\perp}^{\tran}Z\sim \calN\big(0, 4\lambda^2\|Z^{\tran}U_{\perp}(U_{\perp}^{\tran}ZZ^{\tran}U_{\perp})^{t-2}U_{\perp}^{\tran}Zv\|_{\ell_2}^2\big).
$$
We can get, for all $t\geq 2$, that
$$
\EE^{1/2}\big[(\beta_{t,1}^{\Delta})^2\big| U_{\perp}^{\tran}Z\big]\lesssim\EE^{1/4}\big[(\beta_{t,1}^{\Delta})^4\big| U_{\perp}^{\tran}Z\big]\lesssim \lambda \|U_{\perp}^{\tran}Z\|^{2(t-1)}
$$

Therefore, it is easy to check that for any $(i_1, i_2,\cdots, i_{\tau-1})\in\{0,1\}^{\tau-1}$ where there exists some $i_j\geq 1$, then $\EE \beta_{t_1,i_1}^{\Delta} \beta_{t_2,i_2}^{\Delta}\cdots \beta_{t_{\tau-1},i_{\tau-1}}^{\Delta}$'s contribution to any $E_{2k_1}$ is bounded by $\frac{1}{d_{\max}}\cdot \Big(\frac{C_1d_{\max}}{\lambda^2}\Big)^{k_1}$ for some absolute constant $C_1>0$ and $2\ceil{k/2}\leq 2k_1\leq 2k$. To show this, w.l.o.g, let $i_1=i_2=1$ and observe that 
\begin{align}
\EE\beta_{t_1,1}^{\Delta} &\beta_{t_2,1}^{\Delta}\beta_{t_3,i_3}^{\Delta}\cdots \beta_{t_{\tau-1},i_{\tau-1}}^{\Delta}=\EE^{1/2}(\beta_{t_1,1}^{\Delta}\beta_{t_2,1}^{\Delta})^2\EE^{1/2}\big(\beta_{t_3,i_3}^{\Delta}\cdots\beta_{t_{\tau-1},i_{\tau-1}}^{\Delta}\big)^2\nonumber\\
\leq&\EE^{1/4}(\beta_{t_1,1}^{\Delta})^4\EE^{1/4}(\beta_{t_2,1}^{\Delta})^4\EE^{1/2}\big(\beta_{t_3,i_3}^{\Delta}\cdots\beta_{t_{\tau-1},i_{\tau-1}}^{\Delta}\big)^2\nonumber\\
\leq&\lambda^2 d_{\max}^{t_1+t_2-2}\EE^{1/2}\big(\beta_{t_3,i_3}^{\Delta}\cdots\beta_{t_{\tau-1},i_{\tau-1}}^{\Delta}\big)^2\label{eq:beta_tDelta}
\end{align}
and then we get 
\begin{align*}
\frac{1}{\lambda^{2k}}\EE\beta_{t_1,1}^{\Delta} &\beta_{t_2,1}^{\Delta}\beta_{t_3,i_3}^{\Delta}\cdots \beta_{t_{\tau-1},i_{\tau-1}}^{\Delta}\leq \frac{1}{d_{\max}}\cdot \Big(\frac{d_{\max}}{\lambda^2}\Big)^{t_1+t_2-1}\cdot \frac{\EE^{1/2}\big(\beta_{t_3,i_3}^{\Delta}\cdots\beta_{t_{\tau-1},i_{\tau-1}}^{\Delta}\big)^2}{\lambda^{2(k-t_1-t_2)}}.
\end{align*}
The claim follows immediately since 
$$
\frac{\EE^{1/2}\big(\beta_{t_3,i_3}^{\Delta}\cdots\beta_{t_{\tau-1},i_{\tau-1}}^{\Delta}\big)^2}{\lambda^{2(k-t_1-t_2)}}\leq\sum_{k_1=\ceil{(k-t_1-t_2)/2}}^{k_1}\frac{C_1^{k_1}\EE^{1/2}\|Z\|^{4k_1}}{\lambda^{2k_1}} \leq \sum_{k_1=\ceil{(k-t_1-t_2)/2}}^{k_1}\Big(\frac{C_2d_{\max}}{\lambda^2}\Big)^{k_1}
$$
for some absolute constant $C_1, C_2>0$ and where the last inequality is due to $\EE\|Z\|^{4k_1}\leq C_3^{4k_1}d_{\max}^{2k_1}$ for some absolute constant $C_3>0$.  

As a result, in order to calculate eq. (\ref{eq:STkDelta}), it suffices to calculate 
\begin{equation}\label{eq:beta_t0sum}
\frac{1}{\lambda^{2k}}\sum_{\tau=2}^{k}(-1)^{1+\tau}{k-1\choose \tau-1}\sum_{t_1+\cdots +t_{\tau-1}=k, t_j\geq 1}\EE\big(\beta_{t_1,0}^{\Delta}\beta_{t_2,0}^{\Delta}\cdots\beta_{t_{\tau-1},0}^{\Delta}\big).
\end{equation}
Next, we will replace $\EE\big(\beta_{t_1,0}^{\Delta}\beta_{t_2,0}^{\Delta}\cdots\beta_{t_{\tau-1},0}^{\Delta}\big)$ with $\EE\beta_{t_1,0}^{\Delta}\EE\beta^{\Delta}_{t_2,0}\cdots\EE\beta_{t_{\tau-1},0}^{\Delta}$ for which we shall investigate the concentrations of $\beta_{t,0}^{\Delta}$. To this end, we have the sub-exponential inequality
$$
\PP\big(\big|u^{\tran}ZZ^{\tran}u-d_{2-} \big|\geq C_3\sqrt{\alpha d_{2-}}+C_4\alpha \big)\leq C_5e^{-\alpha},\quad \forall \alpha>0
$$
for some constants $C_3,C_4>0$. 
Again, by Gaussian isoperimetric inequality and the proof of Theorem~\ref{thm:first_order_CLT}\footnote{We just need to study the Lipschitz property of the function $f(Z)=u^{\tran}Z(Z^{\tran}U_{\perp}U_{\perp}^{\tran}Z)^tZ^{\tran}u\cdot {\bf 1}(\|Z\|\leq C_1\sqrt{d_{\max}})$}, we can show, for all $\alpha>0$
\begin{align*}
\PP\big(\big|u^{\tran}Z(Z^{\tran}U_{\perp}U_{\perp}^{\tran}Z)^{t}Z^{\tran}u-\EE u^{\tran}Z(Z^{\tran}U_{\perp}U_{\perp}^{\tran}Z)^{t}Z^{\tran}u\big|\geq &C_3\alpha d_{\max}^{t+1/2}+C_4e^{-c_1d_{\max}}d_{\max}^{t+1}\big)\\
&\leq C_5e^{-\alpha^2}+C_6e^{-c_2d_{\max}}
\end{align*}
and 
\begin{align*}
\PP\big(\big|v^{\tran}(Z^{\tran}U_{\perp}U_{\perp}^{\tran}Z)^{t-1}v-\EE v^{\tran}(Z^{\tran}U_{\perp}U_{\perp}^{\tran}Z)^{t-1}v\big|\geq& C_3t\alpha d_{\max}^{t-3/2}+C_4e^{-c_1d_{\max}} d_{\max}^{t-1}\big)\\
&\leq C_5 e^{-\alpha^2}+C_6e^{-c_2d_{\max}}.
\end{align*}
Therefore,  we can show that $\big|\EE(\beta_{t_1,0}^{\Delta}\beta_{t_2,0}^{\Delta}\cdots\beta_{t_{\tau-1},0}^{\Delta})-(\EE\beta_{t_1,0}^{\Delta})(\EE\beta_{t_2,0}^{\Delta})\cdots(\EE\beta_{t_{\tau-1},0}^{\Delta})\big|$'s contribution to any $E_{2k_1}$ is bounded by $\frac{1}{\sqrt{d_{\max}}}\cdot (C_1d_{\max}/\lambda^2)^{k_1}$ for some constant $C_1>0$ and $2\ceil{k/2}\leq 2k_1\leq 2k$. Indeed,  the above concentration inequalities of $\beta_{t,0}^{\Delta}$ imply 
$$
\EE^{1/2}(\beta_{t,0}^{\Delta}-\EE\beta_{t,0}^{\Delta})^2\lesssim d_{\max}^{t-1/2}+\lambda^2 d_{\max}^{t-3/2},\quad \forall t\geq 1.
$$
The claim can be proved as in eq. (\ref{eq:beta_tDelta}). Indeed, we can write 
\begin{align*}
\frac{1}{\lambda^{2k}}\Big|\EE\beta_{t_1,0}^{\Delta}\beta_{t_2,0}^{\Delta}&\cdots\beta_{t_{\tau-1},0}^{\Delta}-(\EE\beta_{t_1,0}^{\Delta})(\EE\beta_{t_2,0}^{\Delta})\cdots\EE(\beta_{t_{\tau-1},0}^{\Delta})\Big|\\
\leq&\frac{1}{\lambda^{2k}}\sum_{i=1}^{\tau-1}\Big(\prod_{j=1}^{i-1}\EE\beta_{t_j,0}^{\Delta}\Big)\Big|\EE\big(\beta_{t_i,0}^{\Delta}-\EE\beta_{t_i,0}^{\Delta}\big)\Big(\prod_{j=i+1}^{\tau-1}\beta_{t_j,0}^{\Delta}\Big)\Big|\\
\leq&\sum_{i=1}^{\tau-1}\frac{\EE^{1/2}\big(\beta_{t_i,0}^{\Delta}-\EE\beta_{t_i,0}^{\Delta}\big)^2}{\lambda^{2t_i}}\cdot \frac{1}{\lambda^{2(k-t_i)}}\prod_{j=1}^{i-1}\Big(\EE\beta_{t_j,0}^{\Delta}\Big)\EE^{1/2}\Big(\prod_{j=i+1}^{\tau-1}\beta_{t_j,0}^{\Delta}\Big)^2\\
\leq&\sum_{i=1}^{\tau-1}\frac{1}{\sqrt{d_{\max}}}\Big((d_{\max}/\lambda^2)^{t_i}+(d_{\max}/\lambda^2)^{t_i-1}\Big)\cdot \frac{1}{\lambda^{2(k-t_i)}}\prod_{j=1}^{i-1}\Big(\EE\beta_{t_j,0}^{\Delta}\Big)\EE^{1/2}\Big(\prod_{j=i+1}^{\tau-1}\beta_{t_j,0}^{\Delta}\Big)^2
\end{align*}
which concludes the proof since $\frac{1}{\lambda^{2(k-t_i)}}\prod_{j=1}^{i-1}\Big(\EE\beta_{t_j,0}^{\Delta}\Big)\EE^{1/2}\Big(\prod_{j=i+1}^{\tau-1}\beta_{t_j,0}^{\Delta}\Big)^2\leq \Big(\frac{C_2d_{\max}}{\lambda^2}\Big)^{k-t_i}$.

%\noindent{\it Case 1}:  $t_1,\cdots,t_{\tau-1}\geq 2$. It can be proved similarly as in eq. (\ref{eq:beta_tDelta}).\\
%{\it Case 2}: $S_1=\{j:t_j=1, 1\leq j\leq \tau-1\}$ so that $|S_1|\geq 1$. W.L.O.G., let $t_1=t_2=\cdots=t_{|S_1|}=1$. Then,
%\begin{align*}
%\big|\EE(\beta_{1,0}^{\Delta}\beta_{t_2,0}^{\Delta}\cdots\beta_{t_{\tau-1},0}^{\Delta})-&\EE\beta_{1,0}^{\Delta}\EE\beta_{t_2,0}^{\Delta}\cdots\EE\beta_{t_{\tau-1},0}^{\Delta}\big|=\\
%\leq& \big|\EE\big((\beta_{1,0}^{\Delta})^{|S_1|}-(\EE\beta_{1,0}^{\Delta})^{|S_1|}\big)\prod_{j\notin S_1}\beta_{t_j,0}^{\Delta}\big|+(\EE\beta_{1,0}^{\Delta})^{|S_1|}\Big|\EE\prod_{j\notin S_1}\beta_{t_j,0}^{\Delta}-\prod_{j\notin S_1}\EE\beta_{t_j,0}^{\Delta}\Big|
%\end{align*}
%where the second term can be controlled as in {\it Case 1}. 

To this end, to calculate eq. (\ref{eq:STkDelta}), it suffices to calculate
$$
\frac{1}{\lambda^{2k}}\sum_{\tau=2}^{k}(-1)^{1+\tau}{k-1\choose \tau-1}\sum_{t_1+\cdots +t_{\tau-1}=k, t_j\geq 1}\EE\beta_{t_1,0}^{\Delta}\EE\beta_{t_2,0}^{\Delta}\cdots\EE\beta_{t_{\tau-1},0}^{\Delta}
$$

Now, we compute $\EE\beta_{t,0}^{\Delta}=\big(1+\lambda^2/(d_{2-}+1)\big)\cdot \EE\tr\big((U_{\perp}^{\tran}ZZ^{\tran}U_{\perp})^{t-1}\big)$. Note that the matrix $U_{\perp}^{\tran}Z\in\RR^{d_{1-}\times (d_{2-}+1)}$ has i.i.d. standard normal entries. By the moment of Marchenko-Pastur law (\citep{mingo2017free}), for all $t\geq 2$, we define (additionally, $\beta_1=d_{2-}+1$)
\begin{equation}\label{eq:betat}
\frac{\beta_t}{1+\lambda^2/(1+d_{2-})}=\frac{1}{t-1}\sum_{r=0}^{t-2}d_{1-}^{r+1}(d_{2-}+1)^{t-1-r}{t-1\choose r+1}{t-1\choose r}.
\end{equation}
Note that $\EE\tr\big((U_{\perp}^{\tran}ZZ^{\tran}U_{\perp})^{t-1}\big)=\EE\tr\big((Z^{\tran}U_{\perp}U_{\perp}^{\tran}Z)^{t-1}\big)$ for all $t\geq 2$. 
By the rate of convergence of Marchenko Pastur law (\cite[Theorem~1.1]{gotze2011rate}), we have (as long as $\sqrt{d_{\max}}\geq \log^2 d_{\max}$)
$$
\frac{\big|\beta_t-\EE\beta_{t,0}^{\Delta} \big|}{1+\lambda^2/(d_{2-}+1)}\leq \frac{1}{\sqrt{d_{\max}}}\cdot \big(C_1d_{\max}\big)^{t-1}
$$
for all $t\geq 2$ where $C_1>0$ is an absolute constant. As a result, we get that for all $t_1+\cdots +t_{\tau-1}=k$, the contribution to $E_{2k_1}$ from $\big|\EE\beta_{t_1,0}^{\Delta}\EE\beta_{t_2,0}^{\Delta}\cdots\EE\beta^{\Delta}_{t_{\tau-1},0}-\beta_{t_1}\beta_{t_2}\cdots\beta_{t_{\tau-1}} \big|$ is bounded by $\frac{1}{\sqrt{d_{\max}}}\cdot \Big(\frac{C_1d_{\max}}{\lambda^2}\Big)^{k_1}$. 

Therefore, by eq. (\ref{eq:STkDelta}), to calculate $\EE\big<uu^{\tran}, \calS_{T,k}(\Delta)\big>$, we consider the following term
$$
\frac{1}{\lambda^{2k}}\sum_{\tau=2}^k(-1)^{1+\tau}{k-1\choose \tau-1}\sum_{t_1+\cdots+t_{\tau-1}=k, t_j\geq 1} \beta_{t_1}\beta_{t_2}\cdots\beta_{t_{\tau-1}}
$$
which is the $k$-th order derivative of the function $\frac{1}{\lambda^{2k}\cdot(k!)}(1-g(\alpha))^{k-1}$ at $\alpha=0$ where 
\begin{equation}\label{eq:galpha}
g(\alpha)=\beta_1\alpha+\alpha^2\beta_2+\alpha^3\beta_3+\cdots=\sum_{k\geq 1}\beta_k \alpha^k.
\end{equation}
Now, we calculate the explicit form of the function $g(\alpha)$. Denote $\gamma=\frac{d_{1-}}{d_{2-}+1}$ and $Y$ the random variable obeying the Marchenko-Pastur distribution, i.e., its pdf is given by
$$
f_{Y}(y)=\frac{1}{2\pi}\frac{\sqrt{(\gamma_+-y)(y-\gamma_-)}}{\gamma y}\cdot {\bf 1}(y\in[\gamma_-, \gamma_+])
$$
where $\gamma_+=(1+\sqrt{\gamma})^2$ and $\gamma_-=(1-\sqrt{\gamma})^2$. It is easy to check that (\citep{mingo2017free})
$$
\beta_t=\Big(1+\frac{\lambda^2}{1+d_{2-}}\Big)d_{1-}(d_{2-}+1)^{t-1}\EE Y^{t-1},\quad \forall t\geq 2.
$$
For notational simplicity, we just write $d_{2-}$ instead of $1+d_{2-}$. 
As a result, we get for $\alpha\ll \frac{1}{d_2^2}$, 
\begin{align*}
g(\alpha)=&\beta_1\alpha+\Big(1+\frac{\lambda^2}{d_{2-}}\Big)d_{1-}\alpha\EE\sum_{t\geq 1}d_{2-}^t(\alpha Y)^t\\
=&\beta_1\alpha+\Big(1+\frac{\lambda^2}{d_{2-}}\Big)\EE \frac{d_{1-}\alpha d_{2-}\alpha Y}{1-d_{2-}\alpha Y}\\
=&\alpha d_{2-}+\Big(1+\frac{\lambda^2}{d_{2-}}\Big)\cdot \frac{\big(\sqrt{1-\alpha d_{2-}\gamma_-}-\sqrt{1-\alpha d_{2-}\gamma_+}\big)^2}{4}
\end{align*}
where the last equality comes up by integrating $Y$ according to the p.d.f. $F_Y(y)$. Therefore, we get 
$$
1-g(\alpha)=\frac{1}{2}\Big[g_+(\alpha)-\frac{\lambda^2}{d_{2-}}g_-(\alpha)\Big]
$$
where 
$$
g_-(\alpha)=1-(d_{1-}+d_{2-})\alpha-\sqrt{(1-\alpha d_{2-}\gamma_-)(1-\alpha d_{2-}\gamma_+)}
$$
and 
$$
g_+(\alpha)=1-(d_{2-}-d_{1-})\alpha+\sqrt{(1-\alpha d_{2-}\gamma_-)(1-\alpha d_{2-}\gamma_+)}.
$$
Therefore, in order to calculate $\EE\big<uu^{\top},\calS_{T,k}(\Delta)\big>$, it suffices to calculate the $k$-th order derivative of function $\frac{(1-g(\alpha))^{k-1}}{\lambda^{2k}\cdot(k!)}$ at $\alpha=0$. Write
\begin{equation}\label{eq:galphak}
\frac{\Big[\big(1-g(\alpha)\big)^{k-1}\Big]^{(k)}}{\lambda^{2k}(k!)}\Big|_{\alpha=0}=\frac{1}{\lambda^{2k}\cdot2^{k-1}\cdot (k!)}\sum_{t=0}^{k-1}{k-1\choose t} \Big(-\frac{\lambda^2}{d_{2-}}\Big)^t\Big[g_-^t(\alpha)g_+^{k-1-t}(\alpha)\Big]^{(k)}\Big|_{\alpha=0}.
\end{equation}
Note that $g_-(\alpha)=O(\alpha^2)$. The terms in eq. (\ref{eq:galphak}) with $t>\frac{k}{2}$ are all $0$. 
Recall that we are interested in the $k_0$-th order term in the error $\|\hat u\hat u^{\tran}-uu^{\tran}\|_{\rm F}^2$ whose denominator is $\lambda^{2k_0}$.  By eq. (\ref{eq:galphak}), the $k_0$-th order error term $\frac{1}{\lambda^{2k_0}}$ can be contributed from $\EE\big<uu^{\top},\calS_{T,k}(\Delta)\big>$ for $k=k_0, k=k_0+1,\cdots, k=2k_0$. 

By the above analysis, we conclude that the $k_0$-th error term (except the negligible error terms from translating $\EE(\beta_{t_1}^{\Delta}\beta_{t_2}^{\Delta}\cdots\beta_{t_{\tau-1}}^{\Delta})$ into $\beta_{t_1}\beta_{t_2}\cdots\beta_{t_{\tau-1}}$) of $\|\hat u\hat u^{\tran}-uu^{\tran}\|_{\rm F}^2$ is given by $E_{2k_0}=\sum_{t=0}^{k_0}E_{2k_0,t}$ where (we change $k$ in (\ref{eq:galphak}) to $k_0+t$) 
$$
E_{2k_0,t}=\frac{1}{\lambda^{2k_0}}\frac{1}{2^{k_0+t-1}}\frac{1}{(k_0+t)!}{k_0+t-1\choose t}\Big(-\frac{1}{d_{2-}}\Big)^t\Big[g_-^t(\alpha)g_{+}^{k_0-1}(\alpha)\Big]^{(k_0+t)}\Big|_{\alpha=0}
$$
When $t=k_0$, we have 
$$
g_{-}^{k_0}(\alpha)=\frac{(4d_{1-}d_{2-})^{k_0}\alpha^{2k_0}}{\big[1-\alpha(d_{1-}+d_{2-})+\sqrt{(1-\alpha d_{2-}\gamma_-)(1-\alpha d_{2-}\gamma_+)}\big]^{k_0}}
$$
implying that 
$$
\Big[g_{-}^{k_0}(\alpha)g_{+}^{k_0-1}(\alpha)\Big]^{(2k_0)}\Big|_{\alpha=0}=(2k_0)! \frac{(4d_{1-}d_{2-})^{k_0}}{2}.
$$
Therefore, we get $E_{2k_0,k_0}=(-1)^{k_0}d_{1-}^{k_0}{2k_0-1\choose k_0}$. Now, we consider $t\leq k_0-1$ and we observe
$$
1-\alpha(d_{1-}+d_{2-})+\sqrt{(1-\alpha d_{2-}\gamma_-)(1-\alpha d_{2-}\gamma_+)}=g_{+}(\alpha)-2d_{1-}\alpha
$$
so that $g_{-}(\alpha)=\frac{4d_{1-}d_{2-}\alpha^2}{g_{+}(\alpha)-2\alpha d_{1-}}$, Then, we get 
\begin{align*}
\Big[g_{-}^t(\alpha)g_+^{k_0-1}(\alpha)\Big]^{(k_0+t)}\Big|_{\alpha=0}=&\Big[\frac{(4d_{1-}d_{2-}\alpha^2)^{t}}{\big(g_+(\alpha)-2\alpha d_{1-}\big)^{t}}\cdot g_{+}^{k_0-1}(\alpha)\Big]^{(k_0+t)}\Big|_{\alpha=0}\\
=&{k_0+t\choose 2t}(2t)!(4d_{1-}d_{2-})^{t}\Big[\frac{g_+^{k_0-1}(\alpha)}{(g_+(\alpha)-2\alpha d_{1-})^t}\Big]^{(k_0-t)}\Big|_{\alpha=0}.
\end{align*}
It suffices to calculate the $(k_0-t)$-th derivative of function $g_+^{k_0-1}(\alpha)/\big(g_{+}(\alpha)-2\alpha d_{1-}\big)^t$ at $\alpha=0$. We write 
\begin{align*}
\Big[\frac{g_+^{k_0-1}(\alpha)}{(g_+(\alpha)-2\alpha d_{1-})^t}\Big]^{(k_0-t)}\Big|_{\alpha=0}=\Big[\sum_{t_1=0}^{k_0-1}{k_0-1\choose t_1}(2\alpha d_{1-})^{k_0-1-t_1}\big(g_{+}(\alpha)-2\alpha d_{1-}\big)^{t_1-t}\Big]^{(k_0-t)}\Big|_{\alpha=0}.
\end{align*}
Observe that $\big[(2\alpha d_{1-})^{k_0-1-t_1}\big]^{(k_0-t)}\big|_{\alpha=0}=0$ for all $t_1< t-1$. Then, we get
$$
\Big[\frac{g_+^{k_0-1}(\alpha)}{(g_+(\alpha)-2\alpha d_{1-})^t}\Big]^{(k_0-t)}\Big|_{\alpha=0}=\Big[\sum_{t_1=t-1}^{k_0-1}{k_0-1\choose t_1}(2\alpha d_{1-})^{k_0-1-t_1}\big(g_{+}(\alpha)-2\alpha d_{1-}\big)^{t_1-t}\Big]^{(k_0-t)}\Big|_{\alpha=0}.
$$
If $t_1=t-1$, then 
$$
\Big[{k_0-1\choose t_1}(2\alpha d_{1-})^{k_0-1-t_1}\big(g_{+}(\alpha)-2\alpha d_{1-}\big)^{t_1-t}\Big]^{(k_0-t)}\Big|_{\alpha=0}={k_0-1\choose t-1}(2d_1)^{k_0-t}(k_0-t)!\cdot \frac{1}{2}
$$
If $t_1\geq t$, we have 
\begin{align*}
\Big[(2\alpha d_{1-}&)^{k_0-1-t_1}\big(g_{+}(\alpha)-2\alpha d_{1-}\big)^{t_1-t}\Big]^{(k_0-t)}\Big|_{\alpha=0}\\
&={k_0-t\choose k_0-1-t_1}(2d_1)^{k_0-1-t_1}(k_0-1-t_1)!\Big[\big(g_{+}(\alpha)-2\alpha d_{1-}\big)^{t_1-t}\Big]^{(t_1+1-t)}\Big|_{\alpha=0}.
\end{align*}
Clearly, if $t_1=t$, then $\big[\big(g_{+}(\alpha)-2\alpha d_{1-}\big)^{t_1-t}\big]^{(t_1+1-t)}\big|_{\alpha=0}=0$. For $t_1\geq t+1$, recall that 
$$
g_{+}(\alpha)-2\alpha d_1=1-(d_{1-}+d_{2-})\alpha+\sqrt{(1-\alpha d_{2-}\gamma_-)(1-\alpha d_{2-}\gamma_+)}.
$$
It is easy to check that 
\begin{align*}
\big[\big(g_{+}(\alpha)-&2\alpha d_{1-}\big)^{t_1-t}\big]^{(t_1+1-t)}\big|_{\alpha=0}\\
=&-\big[\big(1-(d_{1-}+d_{2-})\alpha-\sqrt{(1-\alpha d_{2-}\gamma_-)(1-\alpha d_{2-}\gamma_+)}\big)^{t_1-t}\big]^{(t_1+1-t)}\big|_{\alpha=0}\\
=&-\Big[\Big(\frac{4d_{1-}d_{2-}\alpha^2}{g_{+}(\alpha)-2\alpha d_{1-}}\Big)^{t_1-t}\Big]^{(t_1+1-t)}\Big|_{\alpha=0}
\end{align*}
which is non-zero only when $t_1=t+1$. In fact, when $t_1=t+1$, we get 
$$
\big[\big(g_{+}(\alpha)-2\alpha d_{1-}\big)^{t_1-t}\big]^{(t_1+1-t)}\big|_{\alpha=0}=-4d_{1-}d_{2-}.
$$
Therefore, we conclude that 
\begin{align*}
\Big[&\frac{g_+^{k_0-1}(\alpha)}{(g_+(\alpha)-2\alpha d_{1-})^t}\Big]^{(k_0-t)}\Big|_{\alpha=0}\\
=&{k_0-1\choose t-1}(2d_{1-})^{k_0-t}(k_0-t)!\cdot \frac{1}{2}-{k_0-1\choose t+1}{k_0-t\choose 2}(2d_{1-})^{k_0-t-2}(k_0-2-t)!(4d_{1-}d_{2-}).
\end{align*}
As a result, for $t\leq k_0-1$, we get
$$
E_{2k_0,t}=d_{1-}^{k_0}\cdot (-1)^t{k_0+t-1\choose t}{k_0-1\choose t-1}-d_{1-}^{k_0-1}d_{2-}\cdot (-1)^t{k_0+t-1\choose t}{k_0-1\choose t+1}.
$$
Clearly, it also holds for $t=k_0$. Therefore, we have 
\begin{align*}
E_{2k_0}=&\sum_{t=0}^{k_0}E_{2k_0,t}\\
=&d_{1-}^{k_0}\sum_{t=0}^{k_0}(-1)^{t}{k_0+t-1\choose t}{k_0-1\choose t-1} -d_{1-}^{k_0-1}d_{2-}\sum_{t=0}^{k_0-2}(-1)^t{k_0+t-1\choose t}{k_0-1\choose t+1}.
\end{align*}
It is easy to check that 
\begin{align*}
\sum_{t=0}^{k_0}(-1)^{t}{k_0+t-1\choose t}&{k_0-1\choose t-1}=\sum_{t=1}^{k_0}(-1)^{t}{k_0+t-1\choose t}{k_0-1\choose t-1}\\
=&(-1)\sum_{t=0}^{k_0-1}(-1)^t{k_0+t\choose t+1}{k_0-1\choose t}.
\end{align*}
It is interesting to observe that $\sum_{t=0}^{k_0-1}(-1)^t{k_0+t\choose t+1}{k_0-1\choose t}$ equals the coefficient of $x^{k_0-1}$ in the polynomial $(1+x)^{k_0}\big[1-(1+x)\big]^{k_0-1}$. Then, it is easy to check that $\sum_{t=0}^{k_0-1}(-1)^t{k_0+t\choose t+1}{k_0-1\choose t}=(-1)^{k_0-1}$. Similarly, we can observe that 
\begin{align*}
\sum_{t=0}^{k_0-2}(-1)^t{k_0+t-1\choose t}{k_0-1\choose t+1}=&\sum_{t=1}^{k_0-1}(-1)^{t-1}{k_0+t-2\choose t-1}{k_0-1\choose t}\\
=&(-1)\sum_{t=1}^{k_0-1}(-1)^t{k_0+t-2\choose t-1}{k_0-1\choose t}.
\end{align*}
Again, it is easy to check that $\sum_{t=1}^{k_0-1}(-1)^t{k_0+t-2\choose t-1}{k_0-1\choose t}$ equals the coefficient of $x^{k_0-1}$ in the polynomial $(1+x)^{k_0-2}[1-(1+x)]^{k_0-1}$. As a result, we get $\sum_{t=1}^{k_0-1}(-1)^t{k_0+t-2\choose t-1}{k_0-1\choose t}=(-1)^{k_0-1}$. To this end, we conclude that 
$$
E_{2k_0}=(-1)^{k_0}d_{1-}^{k_0-1}(d_{1-}-d_{2-})
$$
, i.e., the $k_0$-th error term in $\EE\|\hat u\hat u^{\tran}-uu^{\tran}\|_{\rm F}^2$ is given by $\frac{(-1)^{k_0}d_{1-}^{k_0-1}(d_{1-}-d_{2-})}{\lambda^{2k_0}}$ (except the negligible error terms). 
In a similar fashion, we can show that the $k_0$-th error term in $\EE\|\hat v\hat v^{\tran}-vv^{\tran}\|_{\rm F}^2$ is given by $\frac{(-1)^{k_0}d_{2-}^{k_0-1}(d_{2-}-d_{1-})}{\lambda^{2k_0}}$. Meanwhile, the negligible error terms from translating $\EE(\beta_{t_1}^{\Delta}\beta_{t_2}^{\Delta}\cdots\beta_{t_{\tau-1}}^{\Delta})$ into $\beta_{t_1}\beta_{t_2}\cdots\beta_{t_{\tau-1}}$ are upper bounded by $\frac{k_0}{\sqrt{d_{\max}}}\cdot\Big(\frac{C_2d_{\max}}{\lambda_r^2}\Big)^{k_0}$ which concludes the proof. 

 \end{proof}

\begin{lemma}\label{lem:LambdaZ_frob}
Let $\Lambda={\rm diag}(\lambda_1,\cdots,\lambda_r)$ and $Z\in\RR^{r\times d}$ be a random matrix containing i.i.d. standard normal entries. 
Then, for any positive numbers $j_1, j_2$, we have 
$$
\EE\|\Lambda^{-j_1} ZZ^{\tran}\Lambda^{-j_2}\|_{\rm F}^2=d^2\|\Lambda^{-j_1-j_2}\|_{\rm F}^2+d\big(\|\Lambda^{-j_1-j_2}\|_{\rm F}^2+\|\Lambda^{-j_1}\|_{\rm F}^2\|\Lambda^{-j_2}\|_{\rm F}^2\big).
$$
\end{lemma}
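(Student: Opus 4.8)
The plan is to reduce everything to the second moments of the entries of the Wishart matrix $W:=ZZ^{\tran}\in\RR^{r\times r}$. First I would observe that the $(a,b)$-entry of $\Lambda^{-j_1}ZZ^{\tran}\Lambda^{-j_2}$ is $\lambda_a^{-j_1}\lambda_b^{-j_2}W_{ab}$, so that
\[
\big\|\Lambda^{-j_1}ZZ^{\tran}\Lambda^{-j_2}\big\|_{\rm F}^2=\sum_{a,b=1}^r\lambda_a^{-2j_1}\lambda_b^{-2j_2}\,W_{ab}^2 ,
\]
and, taking expectations, it remains only to compute $\EE W_{ab}^2$ in the two cases $a=b$ and $a\neq b$.

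Next I would split into the diagonal and off-diagonal contributions. Writing $W_{ab}=\sum_{k=1}^d Z_{ak}Z_{bk}$, the diagonal entry $W_{aa}=\sum_{k=1}^d Z_{ak}^2$ is a $\chi^2_d$ random variable, so $\EE W_{aa}^2=\Var(\chi^2_d)+(\EE\chi^2_d)^2=2d+d^2$. For $a\neq b$ the summands $Z_{ak}Z_{bk}$, $k=1,\dots,d$, are independent, each with mean zero and with $\EE(Z_{ak}Z_{bk})^2=\EE Z_{ak}^2\,\EE Z_{bk}^2=1$, so $\EE W_{ab}^2=\Var(W_{ab})=d$.

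Finally I would reassemble the pieces: substituting these two values and using $\sum_{a\neq b}\lambda_a^{-2j_1}\lambda_b^{-2j_2}=\|\Lambda^{-j_1}\|_{\rm F}^2\|\Lambda^{-j_2}\|_{\rm F}^2-\|\Lambda^{-j_1-j_2}\|_{\rm F}^2$ gives
\[
\EE\big\|\Lambda^{-j_1}ZZ^{\tran}\Lambda^{-j_2}\big\|_{\rm F}^2=(d^2+2d)\|\Lambda^{-j_1-j_2}\|_{\rm F}^2+d\big(\|\Lambda^{-j_1}\|_{\rm F}^2\|\Lambda^{-j_2}\|_{\rm F}^2-\|\Lambda^{-j_1-j_2}\|_{\rm F}^2\big),
\]
which is the claimed formula after collecting the $\|\Lambda^{-j_1-j_2}\|_{\rm F}^2$ terms. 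There is no genuine obstacle here; the only point that requires a moment of care is the off-diagonal second moment, where one must check that all cross terms $\EE Z_{ak}Z_{bk}Z_{a\ell}Z_{b\ell}$ with $k\neq\ell$ vanish by independence, leaving exactly the $d$ "diagonal" contributions.
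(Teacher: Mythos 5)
Your proof is correct and follows essentially the same route as the paper's: the paper expands $\|\Lambda^{-j_1}ZZ^{\tran}\Lambda^{-j_2}\|_{\rm F}^2$ over the rows $z_i$ of $Z$, which is exactly your entrywise Wishart decomposition, and uses the same moments $\EE(z_i^{\tran}z_i)^2=d^2+2d$ and $\EE(z_{i_1}^{\tran}z_{i_2})^2=d$ before collecting terms identically.
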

\begin{proof}[Proof of Lemma~\ref{lem:LambdaZ_frob}]
Let $z_1,\cdots, z_{r}\in\RR^d$ denote the columns of $Z^{\tran}$.  Therefore, we can write  
$$
\|\Lambda^{-j_1} ZZ^{\tran}\Lambda^{-j_2}\|_{\rm F}^2=\sum_{i=1}^r \frac{1}{\lambda_i^{2(j_1+j_2)}}(z_i^{\tran}z_i)^2+\sum_{1\leq i_1\neq i_2\leq r}\frac{1}{\lambda_{i_1}^{2j_1}\lambda_{i_2}^{2j_2}}(z_{i_1}^{\tran}z_{i_2})^2.
$$
Then, we get 
\begin{align*}
\EE\|\Lambda^{-1} ZZ^{\tran}\Lambda^{-1}\|_{\rm F}^2=&\sum_{i=1}^r\frac{d^2+2d}{\lambda_i^{2(j_1+j_2)}}+\sum_{1\leq i_1\neq i_2\leq r}\frac{d}{\lambda_{i_1}^{2j_1}\lambda_{i_2}^{2j_2}}\\
=&d^2\|\Lambda^{-j_1-j_2}\|_{\rm F}^2+d\big(\|\Lambda^{-j_1-j_2}\|_{\rm F}^2+\|\Lambda^{-j_1}\|_{\rm F}^2\|\Lambda^{-j_2}\|_{\rm F}^2\big).
\end{align*}
\end{proof}

 \end{document}